\newcommand{\bk}{\Bbbk}
\newcommand{\Z}{\mathbb{Z}}
\newcommand{\K}{\mathbb{K}}
\renewcommand{\O}{\mathbb{O}}
\newcommand{\F}{\mathbb{F}}
\newcommand{\Gm}{\mathbb{G}_{\mathrm{m}}}
\newcommand{\fg}{\mathfrak{g}}
\newcommand{\ft}{\mathfrak{t}}
\newcommand{\uenv}{\mathcal{U}}
\newcommand{\Dist}{\mathrm{Dist}}
\newcommand{\Diag}{\mathrm{Diag}}
\newcommand{\bX}{\mathbf{X}}
\newcommand{\cG}{\mathcal{G}}
\DeclareMathOperator{\Spec}{Spec}
\newcommand{\id}{\mathrm{id}}
\newcommand{\la}{\langle}
\newcommand{\ra}{\rangle}
\newcommand{\ol}[1]{\overline{#1}}
\def\lotimes{\@ifnextchar_{\@lotimessub}{\@lotimesnosub}}
\def\@lotimessub_#1{\mathchoice{\mathbin{\mathop{\otimes}^L}_{#1}}%
  {\otimes^L_{#1}}{\otimes^L_{#1}}{\otimes^L_{#1}}}
\def\@lotimesnosub{\mathbin{\mathop{\otimes}^L}}
\numberwithin{equation}{section}
\newtheorem{thm}{Theorem}[section]
\newtheorem{lem}[thm]{Lemma}
\newtheorem{prop}[thm]{Proposition}
\newtheorem{cor}[thm]{Corollary}
\theoremstyle{definition}
\newtheorem{defn}[thm]{Definition}
\theoremstyle{remark}
\newtheorem{rmk}[thm]{Remark}
\newtheorem{ex}[thm]{Example}
\title[On the centralizer of a balanced nilpotent section]{On the centralizer of a balanced nilpotent section}
  \author{William Hardesty}
   \address{Department of Mathematics\\
   Louisiana State University\\
   Baton Rouge, LA 70803\\
   U.S.A.}
  \email{whardesty@lsu.edu}
\begin{document}
\maketitle

  \begin{abstract}
  Let $G$ be a split reductive algebraic group defined over a complete discrete valuation ring $\O$, with residue field $\F$ and fraction field $\K$, where 
  the fiber $G_\F$ is geometrically standard. 
  A balanced nilpotent section $x \in \text{Lie}(G)$ can roughly be thought of as an $\O$-point in a $\K$ nilpotent orbit such that 
  the corresponding orbits over $\K$ and $\F$ have the same Bala--Carter label. 
  In this paper, we will establish a number of results on the structure of the centralizer $G^x \subseteq G$ of $x$.  
  This includes a proof that 
  $G^x$ is a smooth group scheme, and that the component groups of the geometric fibers $G^x_{\ol{\K}}$ and $G^x_{\ol{\F}}$ are isomorphic. 
  \end{abstract}

%%%%%%%%%%%%%%%%%%%%%%%%%%%%%%%%%%%%%%%%%%%%%%%%%%%%%%%%%%%%%%%%%%%%%%%%%%%
\section{Introduction}\label{sec:prelim}
%%%%%%%%%%%%%%%%%%%%%%%%%%%%%%%%%%%%%%%%%%%%%%%%%%%%%%%%%%%%%%%%%%%%%%%%%%%
Let $p$ be a prime, and let $\O$ be a complete discrete-valuation-ring (DVR) with uniformizer $\omega$,  fraction field $\K$, and perfect residue field $\F$
of characteristic $p$.
%and assume $\F = \ol{\F}$ is algebraically closed.\footnote{We assume $\F = \ol{\F}$ for safety, however it is likely that the arguments in this
%paper apply to the more general case when $\F$ is perfect.}
For an $\O$-scheme $X$, and an $\O$-algebra $A$, we use the notation
$X_A := X \times_{\Spec(\O)} \Spec(A)$. 
 Let $G$ be a split reductive algebraic group scheme over $\Spec(\O)$, with a (lower) Borel subgroup $B$ and a maximal torus $T$. 
We will assume that $G_\K$ and $G_\F$ are \emph{geometrically standard} (cf. \cite[\S 3.1]{mcninch2016}). 

\begin{ex}
If $G=SL_n$, then $G_\F$ is geometrically standard provided $p \nmid n$. 
\end{ex}

Let $\fg= \text{Lie}(G)$ be the Lie algebra of $G$, regarded as a scheme, and let  $\fg_\O := \fg(\O)$ denote the $\O$-points of $\fg$, which gives
a lattice $\fg_\O \subset \fg_\K$. We will refer to the elements of $\fg_\O$ as \emph{sections}, since such an element corresponds to a map 
$x: \Spec(\O) \rightarrow \fg$. For each section $x \in \fg_\O$, let $x_\K \in \fg_\K$ and $x_\F \in \fg_\F$ denote the values of this map at the generic point and closed point of $\Spec(\O)$
respectively. 

The adjoint action gives $\fg_\O$ the structure of a rational $G$-module (equivalently an $\O[G]$-comodule). For any $x \in \fg_\O$, let 
$G^x \subseteq G$ denote the scheme-theoretic centralizer of $x$,  and let 
$G^{x_\K}_\K \subseteq G_\K$ and $G^{x_\F}_\F \subseteq G_\F$ denote the scheme-theoretic centralizers of $x_\K$ and $x_\F$ (see \cite[I.2.12(1)]{jantzen} for the definition).\footnote{To simplify notation, we will often write $G^x_\K$ and $G^x_\F$
in place of $G^{x_\K}_\K$ and $G^{x_\F}_\F$ respectively.}
 In fact, it can be deduced from the definition that for any $\O$-algebra $A$, 
\[
G^{x_A}_A \cong (G^x)_A. 
\]
To improve notation, we will often omit the parenthesis on the right hand side, and take $G^x_A \subseteq G_A$ to mean the centralizer of $x_A \in \fg_A$.

\begin{defn}\label{defn:balanced-section}
We say that a section $x$ is \emph{balanced} if $G^{x_\K}_\K$ and $G^{x_\F}_\F$ are smooth group schemes and 
$\dim G^{x_\K}_\K = \dim G^{x_\F}_\F$ (cf. \cite{mcninch2016}). 
\end{defn}
\begin{rmk}\label{rmk:smooth-fiber}
The definition of balanced in particular implies that the scheme-theoretic centralizers of $x_\K$ and $x_\F$ are actually reduced, since smooth implies reduced. 
Thus, over the algebraic closures $\ol{\K}$ and $\ol{\F}$, the base changes $G^x_{\ol{\K}}$ and $G^x_{\ol{\F}}$ coincide with the ``classical" centralizers,
which are defined only on the geometric points as in \cite{ls}. 
\end{rmk}
\begin{rmk}\label{rmk:bala-carter}
It follows from \cite[Theorem~4.5.2 and Corollary~7.3.2]{mcninch2016} and the proof of \cite[Corollary~9.2.2]{mcninch2016},  that the orbits of $x_{\ol{\K}}$ and $x_{\ol{\F}}$ have the same Bala--Carter label. 
%This is because, any balanced nilpotent section $x$ can be replaced by a $G(\O)$-conjugate section $y$, which can be completed to a $SL_2$ triple, 
\end{rmk}
%\begin{rmk}\label{rmk:balanced-nilpotent}
%In this paper, we will only consider $x \in \fg_\O$ which are  \emph{nilpotent}  and \emph{balanced}. 
%\end{rmk}

It is worth mentioning that 
that balanced nilpotent sections exist for every $\overline{\F}$-orbit by \cite[Theorem 4.5.2]{mcninch2016}. Thus, since there exist only finitely many 
nilpotent $\ol{\F}$-orbits, it is possible to enlarge $\O$ by a \emph{finite} extension so that for every $\ol{\F}$ nilpotent orbit $C_{\ol{\F}}$, there exists a balanced 
section $x \in \fg_\O$ such that $x_\F \in C_{\ol{\F}}$ (the corresponding extension of $\F$ is also finite, and thus will remain perfect). 

%is the extension really necessary? Double check with mcninch

\subsection{Smoothness of $G^x$}
By a \emph{smooth} morphism $f:X\rightarrow Y$ of schemes, we will mean a morphism which satisfies the definition given in 
\cite[\href{https://stacks.math.columbia.edu/tag/01V4}{Tag 01V8}]{stacks-project}. 
We have included a proof of the following lemma due to the
lack of a proper reference. 
\begin{lem}\label{defn:smooth}
A morphism $f:X \rightarrow Y$ between schemes of finite-type is smooth if and only if
\begin{enumerate}
%\item $f$ is locally of finite type
\item $f$ is \emph{flat},
\item for every geometric point $\ol{y} \rightarrow Y$, the fiber product $X \times_Y \ol{y}$ is a smooth variety. 
\end{enumerate}
\end{lem}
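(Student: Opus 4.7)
The plan is to prove the two implications separately by reducing each to standard characterizations of smooth morphisms in the Stacks project.

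For the forward direction, I would recall two basic facts about smooth morphisms: first, they are flat, so (1) is immediate; second, smoothness is preserved under arbitrary base change. Applying the latter with a geometric point $\ol{y} \to Y$ shows that $X \times_Y \ol{y} \to \ol{y}$ is smooth, which by definition means that $X \times_Y \ol{y}$ is a smooth variety over the algebraically closed residue field.

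For the reverse direction, the essential input is the \emph{fibral criterion of smoothness}: a morphism locally of finite presentation is smooth if and only if it is flat with geometrically smooth fibers (see the relevant tag in \cite{stacks-project}). Since $X$ and $Y$ are of finite type over the Noetherian base $\Spec(\O)$, the morphism $f$ is automatically locally of finite presentation; moreover, smoothness over a (not-necessarily-closed) residue field $\kappa(y)$ is equivalent to geometric smoothness, so conditions (1) and (2) together match the hypotheses of the criterion exactly and yield smoothness of $f$.

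The main obstacle, if one were to insist on a self-contained proof, would be the fibral criterion itself, whose proof relies either on the cotangent complex formalism or on a direct local analysis via the Jacobian matrix; in our setting we bypass this by citation, so the lemma amounts to a convenient repackaging of a known result in the form used in the sequel.
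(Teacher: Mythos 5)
Your proof is correct and follows essentially the same route as the paper: both reduce the lemma to the Stacks Project fibral criterion for smoothness (flat plus smooth fibers), with the only cosmetic difference being that the paper invokes Hartshorne~III.10.2 to pass between fibers over scheme-theoretic points and geometric points, whereas you absorb that step by citing the version of the criterion phrased in terms of geometrically smooth fibers. The extra care you take in noting that finite type over a Noetherian base yields local finite presentation is a welcome clarification but does not change the substance of the argument.
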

\begin{proof}%\label{rmk:geometric-fiber-smooth}
By first applying  \cite[\href{https://stacks.math.columbia.edu/tag/01V8}{Tag 01V4}]{stacks-project}, we can reduce down to checking 
smoothness at every fiber $X \times_Y y$ for any point $\{y\} \rightarrow  Y$. By definition, $\{y\} = \Spec(\bk)$ for some field $\bk$, and so by 
 \cite[Theorem~III.10.2]{har:ag}, $X \times_Y \Spec(\bk)$ is smooth if and only if $X\otimes_Y \Spec(\ol{\bk})$ is smooth. Combining these two results
 gives the lemma. 
\end{proof}

We will prove the following theorem. 
% \begin{thm}\label{thm:main-thm}
%We can replace $\O$ with a finite integral extension such that for every $\ol{\F}$-orbit, $C_{\ol{\F}} \subset \cN_{\ol{\F}}$, there exists a balanced nilpotent section 
%$x \in \fg_\O$ with $x_\F \in C_{\ol{\F}}$ and $G^x \rightarrow \Spec(\O)$ is a smooth morphism. 
% \end{thm}
\begin{thm}\label{thm:main-thm}
If $x \in \fg_\O$ is a balanced nilpotent section, then $G^x \rightarrow \Spec(\O)$ is a smooth morphism. 
 \end{thm}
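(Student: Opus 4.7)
By Lemma~\ref{defn:smooth}, smoothness of $G^x\to\Spec(\O)$ reduces to verifying (a) flatness of $G^x$ over $\O$ and (b) smoothness of the geometric fibers $G^x_{\ol{\K}}$ and $G^x_{\ol{\F}}$. Condition (b) is immediate from the balanced hypothesis, since smoothness over a field is preserved by base change to the algebraic closure, so I focus on (a). Because $\O$ is a DVR and the flat locus of a finite-type morphism is open, and closed points are dense in any closed subset of a Noetherian scheme, flatness of $G^x$ reduces to flatness at each closed point of the special fiber $G^x_\F$.

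The first step is to verify smoothness, hence flatness, at the identity section $e\in G^x(\O)$ by a local calculation. Fix a basis $\xi_1,\dots,\xi_n$ of $\fg_\O$ and dual \'etale coordinates $t_1,\dots,t_n$ on $G$ near $e$. The ideal of $G^x\subseteq G$ at $e$ is generated by the components of $g\cdot x - x$, whose linear parts at $e$ are the entries of the matrix of $\mathrm{ad}(x)\colon \fg_\O\to\fg_\O$. The balanced hypothesis gives $\dim \ker(\mathrm{ad}(x_\K)) = d = \dim\ker(\mathrm{ad}(x_\F))$, where $d := \dim G^x_\K = \dim G^x_\F$, so $\mathrm{ad}(x)$ has rank $n-d$ on both fibers; as rank cannot rise upon specialization, this forces the image of $\mathrm{ad}(x)$ to be a free $\O$-submodule of $\fg_\O$ of rank $n-d$ whose reduction modulo $\omega$ still has dimension $n-d$. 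Thus I can select $n-d$ defining equations whose linear parts at $e$ are $\O$-linearly independent modulo $\omega$. By the formal implicit function theorem these cut out a formally smooth closed $\O$-subscheme $X\subseteq \widehat{G}_e$ of relative dimension $d$ containing $\widehat{G^x}_e$. Each fiber of $X$ is a formal power series ring over the corresponding residue field, hence integral of dimension $d$; since $G^x_\K$ and $G^x_\F$ are smooth of dimension $d$ at $e$, the inclusions $\widehat{G^x}_{\K,e}\subseteq X_\K$ and $\widehat{G^x}_{\F,e}\subseteq X_\F$ are forced to be equalities. The $\O$-flatness of $X$ then makes the surjection $X\twoheadrightarrow\widehat{G^x}_e$ (whose kernel is $\omega$-torsion) an isomorphism, giving smoothness of $G^x$ over $\O$ at $e$.

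The remaining task --- globalizing from $e$ to every closed point of $G^x_\F$ --- is the main obstacle. Using the group scheme structure of $G^x$, any $\widetilde\O$-point $g\in G^x(\widetilde\O)$ with $\widetilde\O/\O$ finite unramified gives a left-translation automorphism $L_g$ of $G^x_{\widetilde\O}$ sending $e$ to $g$; this propagates smoothness at $e$ to smoothness at $g$, and by descent along the faithfully flat \'etale extension $\widetilde\O/\O$, to smoothness of $G^x$ at the underlying point of $g$. Hence it suffices to produce, for every geometric connected component of $G^x_{\ol\F}$, such an $\widetilde\O$-lift of some representative. For this I plan to invoke \cite{mcninch2016}: Remark~\ref{rmk:bala-carter} shows the orbits of $x_{\ol\K}$ and $x_{\ol\F}$ share a Bala--Carter label, which together with McNinch's construction of balanced sections should supply the needed lifts. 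Once they are in hand, translation gives flatness at every closed point of $G^x_\F$, hence globally, completing the proof. The lifting of component representatives is the hard part; the reduction to flatness at $e$ and the local smoothness argument there are routine once the balanced hypothesis is properly exploited.
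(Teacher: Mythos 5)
Your local argument at the identity is sound, and it gives an elementary alternative to the paper's route via distribution algebras and Kostant $\O$-forms (Lemma~\ref{lem:dist-compare}, Lemma~\ref{lem:inf-flat}, Proposition~\ref{prop:connected}): both establish that $G^x$ is flat, hence smooth, in a neighborhood of~$e$. The genuine gap is in the globalization. Translation by $g \in G^x(\widetilde\O)$ does propagate smoothness, but you have no means to produce such $\widetilde\O$-points hitting every component of $G^x_{\ol\F}$: since $G^x$ is not yet known to be flat, you cannot invoke Hensel's lemma to lift a point $\bar g$ in an arbitrary component. In fact any $g\in G^x(\widetilde\O)$ automatically lies in $G^x_{\text{tf}}(\widetilde\O)$ (an $\O$-algebra map $\O[G^x]\to\widetilde\O$ kills $I_{\text{tor}}$ because $\widetilde\O$ is torsion-free), so its reduction $\bar g$ automatically lies in $G^x_{\text{tf},\ol\F}$. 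The existence of the lifts you need is therefore \emph{equivalent} to the assertion $G^x_{\text{tf},\ol\F}=G^x_{\ol\F}$ --- which is precisely what needs proving --- so the plan as written is circular.

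Invoking Remark~\ref{rmk:bala-carter} cannot by itself fill this gap. The Bala--Carter label equality tells you nothing about lifting points of the centralizer; what is actually required is the count $|A(x_{\ol\K})|=|A(x_{\ol\F})|$ for \emph{arbitrary} split reductive $G$ with geometrically standard fiber (Proposition~\ref{prop:comp-group}). McNinch--Sommers gives this only for simple adjoint groups, and the paper spends \S\S3.1--3.2 extending it through the isogeny to the adjoint form by controlling the intersection $Z\cap S$ of the center with a split maximal torus of $G^x$, showing it is diagonalizable and constant over $\O$ (Lemmas~\ref{lem:diag-pullback}, \ref{lem:diag-constant}, Propositions~\ref{prop:lift-torus}, \ref{prop:torus-intersect}). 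Even once that count is available, the paper finishes not by translation but by rescaling idempotents of $\K[G^x_\K]$ into $\O[G^x_{\text{tf}}]$ (\S\ref{subsec:disconnected-case}), which forces the special fiber of $G^x_{\text{tf}}$ to have the full complement of $m$ components. All of that work is what your sketch leaves out, and the theorem does not follow without it.
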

 \begin{rmk}\label{rmk:main-thm}
We already know that the morphism $G^x \rightarrow \Spec(\O)$ is finite-type with geometric fibers
$G^{x}_{\ol{\K}}$ and $G^{x}_{\ol{\F}}$, which are smooth varieties by Remark~\ref{rmk:smooth-fiber}, so according to Lemma~\ref{defn:smooth}, it suffices 
to show that $\O[G^x]$ is a flat $\O$-module. However, since $\O$ is a DVR, this is equivalent to proving that $\O[G^x]$ is torsion-free over $\O$. 
  This will be proven in \S\ref{subsec:disconnected-case}. 
\end{rmk}

\subsection{Results on component groups}\label{ssec:comp-group-intro}
 Let $A(x_{\ol{\K}}) = G^x_{\ol{\K}}(\ol{\K})/(G^x_{\ol{\K}})^\circ(\ol{\K})$ and $A(x_{\ol{\F}})= G^x_{\ol{\F}}(\ol{\F})/(G^x_{\ol{\F}})^\circ(\ol{\F})$ denote the (discrete) component 
 groups of the geometric fibers $G^x_{\ol{\K}}$ and $G^x_{\ol{\F}}$ respectively. 
 In the case where $G$ is simple and of adjoint type, it follows from Remark~\ref{rmk:bala-carter} and \cite{mcninch-sommers} that there is an isomorphism of groups 
 $A(x_{\ol{\K}}) \cong A(x_{\ol{\F}})$.
 We will extend this result to arbitrary split reductive groups $G$ with geometrically standard fiber $G_\F$. 
 \begin{thm}\label{thm:mcninch-sommers-generalization}
 If $x \in \fg_\O$ is a balanced nilpotent section, then there is a group isomorphism $A(x_{\ol{\K}}) \cong A(x_{\ol{\F}})$. 
 \end{thm}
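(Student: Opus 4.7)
The plan is to combine the smoothness established in Theorem~\ref{thm:main-thm} with the balanced hypothesis to construct a finite \'etale group scheme over $\O$ whose geometric fibers canonically recover $A(x_{\ol{\K}})$ and $A(x_{\ol{\F}})$, and then to invoke the rigidity of finite \'etale schemes over the complete DVR $\O$ to deduce that the two fibers have isomorphic underlying groups. This strategy bypasses the simple-adjoint reduction entirely and handles arbitrary split reductive $G$ in one stroke.

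The first step is to apply Theorem~\ref{thm:main-thm} to get that $G^x \to \Spec(\O)$ is smooth. The balanced hypothesis moreover forces the two geometric fibers to have equal dimension, so the morphism is smooth with equidimensional fibers. Under these conditions, the relative identity component functor is representable by an open subgroup scheme $(G^x)^\circ \subseteq G^x$ (SGA~3, Exp.~VI$_{\mathrm{B}}$, Th\'eor\`eme~3.10), itself smooth over $\O$ with connected fibers $(G^x_{\ol{\K}})^\circ$ and $(G^x_{\ol{\F}})^\circ$. The fppf-quotient $\pi_0(G^x) := G^x/(G^x)^\circ$ is then representable by an \'etale $\O$-group scheme whose geometric fibers are exactly $A(x_{\ol{\K}})$ and $A(x_{\ol{\F}})$ (viewed as constant group schemes). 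Since centralizers of nilpotent elements in reductive groups over algebraically closed fields have finite component groups, both fibers are finite, so $\pi_0(G^x) \to \Spec(\O)$ is in fact finite \'etale.

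The final step is to invoke the equivalence of categories between finite \'etale $\O$-schemes and finite \'etale $\F$-schemes, which holds because $\O$ is complete and hence Henselian. Under this equivalence the underlying group of $\pi_0(G^x)(\O^{\mathrm{sh}})$, where $\O^{\mathrm{sh}}$ denotes the strict Henselization, is canonically identified with both $A(x_{\ol{\F}})$ (via specialization to the residue fiber) and $A(x_{\ol{\K}})$ (via the generic fiber, using that any section over $\O^{\mathrm{sh}}$ is determined by its generic value on the finite \'etale cover). The identification respects the group law by functoriality of the quotient construction, and this yields the desired isomorphism $A(x_{\ol{\K}}) \cong A(x_{\ol{\F}})$.

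The main obstacle is the representability of the relative identity component as an open subgroup scheme: without the equidimensionality supplied by the balanced hypothesis, $(G^x)^\circ$ need not be representable in the relative sense, and the component group could \emph{a priori} change rank between fibers, causing the whole strategy to collapse. Once representability is in hand, however, the remaining manipulations are formal consequences of standard properties of smooth and finite \'etale group schemes over Henselian local rings.
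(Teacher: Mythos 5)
There is a genuine gap in the passage from ``both fibers of $\pi_0(G^x)$ are finite'' to ``$\pi_0(G^x) \to \Spec(\O)$ is finite \'etale.'' An \'etale scheme of finite type over a Henselian local ring with finite fibers need not be finite: over such a ring it decomposes as the disjoint union of a finite \'etale part and an open piece with empty special fiber (and hence supported entirely over $\K$), so in general one only gets $|A(x_{\ol{\K}})| \geq |A(x_{\ol{\F}})|$. Establishing that the second piece is empty, i.e.\ that $\pi_0(G^x)$ is actually finite over $\O$, is precisely equivalent to the cardinality statement $|A(x_{\ol{\K}})| = |A(x_{\ol{\F}})|$, which is the hard content of the theorem. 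Your argument never supplies this, so the conclusion does not follow.

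Beyond the gap, the claim that this strategy ``bypasses the simple-adjoint reduction entirely'' does not hold up on inspection, because Theorem~\ref{thm:main-thm}, which you invoke as the starting point, is itself proved in \S\ref{subsec:disconnected-case} using exactly the equality of cardinalities from Proposition~\ref{prop:comp-group}, which in turn rests on the isogeny reductions to the simple adjoint case and the McNinch--Sommers result. So the reduction is baked into the smoothness input. Once one grants the smoothness and the cardinality matching, the rest of your outline is essentially the same promotion step the paper performs in Theorem~\ref{thm:integral-comp-group}, except the paper realizes the component group scheme concretely via the orthogonal idempotents $\epsilon_0, \dots, \epsilon_{m-1} \in \O[G^x]$ and verifies the Hopf-algebra structure by hand, rather than appealing to SGA~3 and the \'etale equivalence. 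The explicit idempotent route has the advantage of producing, at the same time, the internal decomposition of $\O[G^x]$ used in Theorem~\ref{thm:graded-analogues}; your route, once the finiteness gap is repaired with the same cardinality input, would give a cleaner conceptual packaging of the final step but would not shorten the overall argument.
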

 \begin{rmk}\label{rmk:comp-group-remark}
 This will be proven in \S\ref{ssec:comp-grp-scheme}. 
 \end{rmk}

We will also prove in Theorem~\ref{thm:integral-comp-group}, that it always possible to enlarge $\O$ so that 
 the identity components for the fibers can be lifted to a normal subgroup scheme $(G^x)^\circ \unlhd G^x$, where the quotient
 $G^x/(G^x)^\circ$ is an affine group scheme whose geometric fibers are $A(x_{\ol{\K}})$ and $A(x_{\ol{\F}})$. 

\subsection{Centralizers for the $G\times \Gm$ action}
In \S\ref{sec:graded-action}, we consider the centralizers for a certain action of $G\times \Gm$ on $\fg$. 
For instance, we will show that if $x \in \fg_\O$ is balanced for the $G$ action, then the centralizer
$(G\times \Gm)^x\subseteq G\times \Gm$ is also smooth, and there exists an isomorphism of component groups for the geometric fibers 
(see Theorem~\ref{thm:graded-analogues}). 
An application of this will be given in Proposition~\ref{prop:reductive-lattice}, which can be used to relate the representation theory for
the reductive quotients
of the $\K$ and $\F$ centralizers by considering the representation theory of $(G\times \Gm)^x$. 

\subsection{Additional comments}
The main source of motivation for this project originated from the author's work with P.~Achar and S.~Riche on the \emph{modular Lustig--Vogan} bijection in \cite{ahr:mlv}, where the structure and representation
theory of $G^x$ plays a crucial role.

It should also be mentioned that the smoothness of $G^x$ result has been verified in a number of cases, and by very different arguments, in 
\cite{ahr:mlv} and \cite{booher:thesis}.

\subsection{Acknowledgements}
The author wishes to express his gratitude to P.~Achar, S.~Riche and G.~McNinch for their helpful comments and suggestions. 

\section{The torsion-free subgroup scheme}
%%%%%%%%%%%%%%%%%%%%%%%%%%%%%%%%%%%%%%%%%%%%%%%%%%%%%%%%%%%%%%%%%%%%%%%%%%%

Fix a balanced nilpotent section $x \in \fg_{\O}$ and let 
\[
I_{\text{tor}} = \ker(\O[G^x] \rightarrow \K \otimes_{\O} \O[G^x] \cong \K[G^x]). 
\]
denote the ideal consisting of all the $\omega$-torsion elements of $\O[G^x]$, then $V(I_{\text{tor}})$ is a closed torsion-free subscheme of $G^x$. 
%(i.e. flat whenever $\O$is a DVR). 
%It follows that $\O[G^x]$ is smooth if and only if $V(I_{\text{tor}}) = G^x$ since $\O$ is a discrete-valuation-ring . 

\subsection{}
We will begin by establishing some general properties of this subscheme. 

\begin{lem}\label{lem:tensor-injective}
Let $M$ an arbitrary $\O$-module, and let $N \leq M$ be any free finite-rank $\O$-submodule, then the natural morphism
\[
N\otimes_{\O} N \xrightarrow{\upsilon} M \otimes_{\O} M
\]
is injective. 
\end{lem}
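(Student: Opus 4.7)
The plan is to exploit the fact that $\O$ is a DVR by showing that $\ker(\upsilon)$ is simultaneously torsion-free and $\omega$-torsion, and hence zero.

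First I would observe that since $N$ is free of finite rank $n$, the module $N \otimes_\O N$ is free of rank $n^2$, and in particular torsion-free over the DVR $\O$. Thus any submodule of $N \otimes_\O N$, and specifically $\ker(\upsilon)$, is itself torsion-free.

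Next I would base-change the map $\upsilon$ to the fraction field $\K$. Since $\K$ is flat over $\O$, tensoring the inclusion $N \hookrightarrow M$ with $\K$ yields an injection $\K \otimes_\O N \hookrightarrow \K \otimes_\O M$ of $\K$-vector spaces. Because the tensor product over a field of two injective linear maps is injective (one may use that injections of $\K$-vector spaces split), the induced map $(\K \otimes_\O N) \otimes_\K (\K \otimes_\O N) \hookrightarrow (\K \otimes_\O M) \otimes_\K (\K \otimes_\O M)$ is injective. Via the standard identification $\K \otimes_\O (A \otimes_\O B) \cong (\K \otimes_\O A) \otimes_\K (\K \otimes_\O B)$, this coincides with $\K \otimes_\O \upsilon$, and flatness of $\K$ over $\O$ then identifies its kernel with $\K \otimes_\O \ker(\upsilon)$. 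It follows that $\K \otimes_\O \ker(\upsilon) = 0$, i.e., every element of $\ker(\upsilon)$ is $\omega$-torsion.

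Combining the two properties forces $\ker(\upsilon) = 0$, which establishes the lemma. I do not foresee any serious obstacle: the argument is a short flatness-and-localization exercise that rests only on the freeness of $N$ (hence of $N \otimes_\O N$) together with the flatness of the fraction field over the DVR. The only mild nuisance is keeping the various base-change identifications straight, but these are standard.
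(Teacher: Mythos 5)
Your proof is correct and follows essentially the same approach as the paper: both arguments base-change $\upsilon$ to $\K$ to conclude that $\ker(\upsilon)$ is $\omega$-torsion, and then use the freeness of $N\otimes_\O N$ to conclude it is also torsion-free, forcing $\ker(\upsilon)=0$. The only difference is cosmetic — the paper writes out the base-change identifications as explicit commutative diagrams, whereas you invoke them in prose.
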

\begin{proof}
%Let $\{e_1,\dots, e_r\} \subseteq M$ be a basis for $N$ (and also for $N_{\K} := N \otimes_{\O} \K$).  
Since the functor 
$-\otimes_{\O} \K : \O\text{-mod} \rightarrow \K\text{-mod}$ is exact, the induced map 
\[
N_{\K} \longrightarrow M_{\K}
\]
is also injective, so we can regard $N_{\K}$ as a submodule of $M_{\K}$. There is a commutative diagram
\[
\begin{CD}
            (N\otimes_{\O} N) \otimes \K @= N\otimes_{\O} N_{\K} @ =  N \otimes_{\O}(\K \otimes_{\K} N_{\K}) @= N_{\K} \otimes_{\K} N_{\K}\\
             @V\upsilon \otimes 1 VV  @VVV @VVV @VVV\\
         (M\otimes_{\O} M) \otimes \K @= M\otimes_{\O} M_{\K} @ =  M \otimes_{\O}(\K \otimes_{\K} M_{\K}) @= M_{\K} \otimes_{\K} M_{\K}\\
\end{CD}
\]
where the $``="$ symbols denote natural isomorphisms. The rightmost arrow is the canonical injection
\[
N_{\K} \otimes_{\K} N_{\K} \hookrightarrow M_{\K} \otimes_{\K} M_{\K},
\]
so in particular, $\upsilon \otimes 1$ is injective. 

Composing $- \otimes_{\O} \K$ with the forgetful functor to $\O$-mod, and letting $S = \ker(\upsilon)$, gives the commutative diagram 
\[
\begin{CD}
0      @>>>      S    @>>>    N \otimes_{\O} N @>\upsilon>> M \otimes_{\O} M \\
@.                  @VVV           @VVV                                           @VVV\\ 
0      @>>>      S\otimes_{\O} \K    @>>>    (N \otimes_{\O} N)\otimes_{\O}\K @>\upsilon \otimes 1>> (M \otimes_{\O} M)\otimes_{\O}\K.
\end{CD}
\]
The exactness of the top row implies exactness of the bottom row since $-\otimes_{\O}\K$ is exact. However, the injectivity of $\upsilon \otimes 1$ 
 implies $S \otimes_{\O} \K=0$. Therefore, $S \subseteq N \otimes_{\O} N$ is a torsion submodule of the free finite-rank $\O$-module $N\otimes_{\O} N$,
 and hence, $S =0$. 
\end{proof}

We will also require the following lemma. 
\begin{lem}\label{lem:torsion-free subgroup}
Let $H$ be any affine algebraic group scheme over $\Spec{\O}$, then the closed subscheme $V(I_{tor}) \subseteq H$ is actually a (torsion-free) subgroup scheme, which we will denote by $H_{\text{tf}}$. 
\end{lem}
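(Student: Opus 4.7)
The plan is to translate the assertion into a Hopf-algebraic statement: the task is to show that the comultiplication, counit, and antipode on $\O[H]$ descend to the quotient $A := \O[H]/I_{\text{tor}}$. This amounts to three conditions: (i) the counit $\varepsilon : \O[H] \to \O$ vanishes on $I_{\text{tor}}$; (ii) the antipode $S$ sends $I_{\text{tor}}$ into itself; and (iii) the coproduct $\Delta$ sends $I_{\text{tor}}$ into $I_{\text{tor}} \otimes_\O \O[H] + \O[H] \otimes_\O I_{\text{tor}}$.

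Conditions (i) and (ii) should be immediate from $\O$-linearity: $\varepsilon$ lands in the torsion-free module $\O$, so it kills any torsion; and $S$ commutes with the $\O$-action, so it sends $\omega^n$-torsion to $\omega^n$-torsion.

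The main step is (iii), and the key observation is that $A$, being torsion-free over the DVR $\O$, is flat. Consequently $A \otimes_\O A$ is flat, hence torsion-free. The kernel of the surjection $\O[H] \otimes_\O \O[H] \twoheadrightarrow A \otimes_\O A$ is standardly identified with $I_{\text{tor}} \otimes_\O \O[H] + \O[H] \otimes_\O I_{\text{tor}}$. Now for any $f \in I_{\text{tor}}$ with $\omega^n f = 0$, applying $\Delta$ yields $\omega^n \Delta(f) = 0$; thus $\Delta(f)$ is torsion in $\O[H] \otimes_\O \O[H]$, and its image in the torsion-free module $A \otimes_\O A$ must vanish, forcing $\Delta(f)$ into the desired kernel.

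Once (i)--(iii) are in hand, the induced structure maps on $A$ will automatically satisfy the Hopf-algebra axioms by naturality of the quotient, or equivalently by restricting the structure maps on $\K[H]$ along the embedding $A \hookrightarrow \K[H]$ (extended to $A \otimes_\O A \hookrightarrow \K[H] \otimes_\K \K[H]$ via flatness of $A$, in the spirit of Lemma~\ref{lem:tensor-injective}). The torsion-freeness of $H_{\text{tf}}$ is built into the construction of $A$. I expect the principal obstacle to be (iii), whose punch line is really that the target of the descent map is torsion-free, so any torsion in $\O[H] \otimes_\O \O[H]$ is automatically annihilated.
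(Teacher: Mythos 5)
Your proof is correct, and it takes a genuinely cleaner route than the paper for the main step (iii). The paper's proof attacks the coideal condition head-on: it writes $\Delta(f) = \sum f_i \otimes h_i$, takes the finitely generated $\O$-submodule $M$ spanned by the $f_i, h_i$, splits $M = M_{\text{tor}} \oplus M_{\text{free}}$ using the structure theory over a DVR, separates each $f_i \otimes h_i$ into a part in $I_{\text{tor}}\otimes \O[H] + \O[H]\otimes I_{\text{tor}}$ and a part $\alpha_i \otimes \beta_i$ in the free submodule, and then invokes Lemma~\ref{lem:tensor-injective} to conclude that $\sum \alpha_i \otimes \beta_i$, being torsion inside a free finite-rank submodule, must vanish. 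You instead pass to the quotient $A = \O[H]/I_{\text{tor}}$, observe that $A$ is torsion-free (hence flat over the DVR $\O$), so $A \otimes_\O A$ is flat and therefore torsion-free, and then since $\Delta(f)$ is torsion its image in $A \otimes_\O A$ must be zero, placing $\Delta(f)$ in the kernel $I_{\text{tor}}\otimes\O[H] + \O[H]\otimes I_{\text{tor}}$. This bypasses the explicit splitting and the appeal to Lemma~\ref{lem:tensor-injective} entirely; the paper's Lemma~\ref{lem:tensor-injective} is essentially an elementary proof, for the special case at hand, of the general fact you invoke (that flatness is preserved under tensor product and equals torsion-freeness over a DVR). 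Your argument is shorter and more conceptual, at the cost of relying on these slightly less elementary general facts; the paper's argument is more self-contained. Conditions (i) and (ii) are handled identically in both.
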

\begin{proof}
By \cite[I.2.4(6)]{jantzen} it suffices to show 
\begin{equation}\label{eqn:coideal}
\Delta(I_{\text{tor}}) \subseteq I_{\text{tor}}\otimes \O[H] + \O[H] \otimes I_{\text{tor}},
\end{equation}
$\varepsilon(I_{\text{tor}}) = 0$ and $\sigma(I_{\text{tor}}) \subseteq I_{\text{tor}}$ where $\Delta$, $\varepsilon$ and $\sigma$ are the 
comultiplication, counit and antipode for $\O[H]$ respectively. 

Clearly  $I_{\text{tor}} \subseteq I_1$, since $\O[H]/I_{1} \cong \O$ is torsion-free, where $I_1 = \ker(\varepsilon)$. 
Also, since $\sigma: \O[H] \rightarrow \O[H]$ is an $\O$-linear map, it must preserve torsion. So the second and third identities are 
verified and we are left to verify \eqref{eqn:coideal}. 

Let $f \in I_{\text{tor}}$ be arbitrary, then $\Delta(f)$ is also torsion since $\Delta$ is $\O$-linear. 
Now suppose, 
\[
\Delta(f) = \sum_{i=1}^m f_i \otimes h_i,
\] 
and let $M \leq \O[H]$ be the $\O$-submodule generated by $f_i, h_i$, $1\leq i \leq m$. Since $M$ is finitely-generated and 
$\O$ is a DVR, then there exists a decomposition $M = M_{\text{tor}} \oplus M_{\text{free}}$. Let $\{e_1,\dots, e_r\}$ be a basis for $M_{\text{free}}$, then 
for $i=1,\dots, m$,
\[
\begin{aligned}
f_i &= \left( \sum_{j=1}^m a_{i,j}e_j \right) + \varphi_i, \quad
h_i &= \left( \sum_{j=1}^m b_{i,j}e_j \right) + \psi_i,
\end{aligned}
\]
where $a_{i,j}, b_{i,j} \in \O$ and $\varphi_i,\, \psi_i \in M_{\text{tor}}$. Let $\alpha_i = \sum_{j=1}^m a_{i,j}e_j $ and 
$\beta_i = \sum_{j=1}^m b_{i,j}e_j$, so that 
\[
f_i \otimes h_i = \alpha_i \otimes \beta_i + (\alpha_i \otimes \psi_i + \varphi_i \otimes \beta_i + \varphi_i \otimes \psi_i).
\]
Clearly, ${f_i \otimes h_i - \alpha_i \otimes \beta_i \in I_{\text{tor}}\otimes \O[G^x] + \O[G^x] \otimes I_{\text{tor}}}$, and is, in particular, torsion. 

On the other hand, the $\alpha_i \otimes \beta_i$ terms are in the image of the natural map 
\[
 M_{\text{free}} \otimes M_{\text{free}} \longrightarrow \O[H]\otimes \O[H]. 
\]
By Lemma~\ref{lem:tensor-injective}, this map is injective, and thus $M_{\text{free}} \otimes M_{\text{free}}$ can be regarded as a free finite-rank $\O$-submodule
of $\O[H]\otimes \O[H]$. It follows that
\[
\sum_{i=1}^m \alpha_i \otimes \beta_i = \Delta(f) - \sum_{i=1}^m (\alpha_i \otimes \psi_i + \varphi_i \otimes \beta_i + \varphi_i \otimes \psi_i) 
\]
is both torsion and lies in the free submodule $M_{\text{free}} \otimes M_{\text{free}}$, and therefore must be zero. 
%\[
%\Delta(f) \in M \otimes M = (M_{tor} \otimes M + M\otimes M_{tor}) + M_{free}\otimes M_{free},
%\]
%where $M_{free}\otimes M_{free}$ is free (and hence torsion free), and thus the component of $\Delta(f)$ appearing there must be $0$. 
\end{proof}
\begin{rmk}
The property that $V(I_{\text{tor}})$ is a subgroup scheme, also appears to follow from a more general property,
 stated at the beginning of \S 3.1 in\cite{gille-mb}.
\end{rmk}

%By Lemma~\ref{lem:torsion-free subgroup}, we know that $G^x_{\text{tf}}$ is a closed subgroup-scheme of $G^x$. Hence, its base change to 
%$\F$, $G^x_{\text{tf},\F}$ is a closed subgroup scheme of $G^x_{\F}$. By definition, $G^x_{\text{tf},\K} = G^x_{\K}$.
%Thus, it will follow that $G^x_{\text{tf}} = G^x$, provided 
%\[
%G^x_{\text{tf},\F} = G^x_{\F}.
%\]
%More precisely, we need to show this result for $\Z_p$ since torsion-free implies flat, then we can deduce flatness by 
%base change to  $\O$. 

\subsection{}\label{subsec:connected-case}
In this subsection, it will be proven that $G^x_{\text{tf}}$ is smooth, and that $G^x_{\text{tf}, \ol{\F}}$ 
contains the identity component $(G^x_{\ol{\F}})^{\circ}$ (see \cite[Definition~13.12]{mil:ALA} for the definition of the identity component over a general field). 
By Remark~\ref{rmk:smooth-fiber}, 
the identity component of $G^x_{\ol{F}}$ is also reduced. 
Our strategy will be to compare the distribution algebras of the $\K$ and $\F$ centralizers. 

Let us first recall that the $\K$ and $\F$ centralizers admit the Levi decompositions
\begin{equation}\label{eqn:levi}
G^{x}_{\K} = G^{x}_{\K, {\text{red}}} \ltimes  G^{x}_{\K, {\text{unip}}}, \quad G^{x}_{\F} = G^{x}_{\F, {\text{red}}} \ltimes  G^{x}_{\F, {\text{unip}}}
\end{equation}
(cf. \cite[Corollary~29]{mcninch2004}). 
By \cite[Corollary 9.2.2]{mcninch2016}, the $\K$ and $\F$ reductive quotients also have the same root datum. 
%Moreover, the tables in \cite{ls} imply that $\dim\, G^{x_{\K}}_{\K} = G^{x_{\F}}_{\F}$, so 
%if we write 
%\[
%G^x_{\K} = G^x_{red, \K} \ltimes U_{\K}, \quad G^x_{\F} = G^x_{red, \F} \ltimes U_{\F}.
%\]
%then it follows that $\dim\, U_{\K} = \dim\, U_{\F}$. 
Moreover, by \cite[Theorem 28]{mcninch2004}, the unipotent radicals $G^{x}_{\K, {\text{unip}}}$ and $G^{x}_{\F, {\text{unip}}}$ are both
\emph{split}, and have the same rank (i.e. as schemes, they are both isomorphic to affine spaces of the same dimension). Thus, there is an isomorphism of 
graded vector spaces
\begin{equation}\label{eqn:unipotent-distributions}
 \Dist(G^{x}_{\K, {\text{unip}}}) = \Dist(\mathbb{A}_\K^d, 0), \quad  \Dist(G^{x}_{\F, {\text{unip}}}) = \Dist(\mathbb{A}_\F^d, 0)
\end{equation}
where $d = \dim\, G^{x}_{\K, {\text{unip}}} = \dim\, G^{x}_{\F, {\text{unip}}}$. These distributions are explicitly calculated in \cite[I.7.3]{jantzen}.

\begin{lem}\label{lem:dist-compare}
For all $n\geq 0$, 
\[
\dim\, \Dist_n(G^x_{\K}) = \dim\, \Dist_n(G^x_{\F}). 
\]
\end{lem}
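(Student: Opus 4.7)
The plan is to exploit the Levi decompositions~\eqref{eqn:levi} together with the structural information recalled just above the lemma, reducing the equality of distribution dimensions to the reductive and unipotent factors separately.

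First, I would record the general fact that for any affine group scheme of the form $H \ltimes U$ over a field, the underlying scheme is isomorphic to the direct product $H \times U$ via $(h,u) \mapsto hu$ (only the multiplication is twisted). Consequently, the formal neighborhood of the identity, and hence the filtered vector space of distributions supported there, depends only on the scheme structure, not on the group structure. Applying the Künneth-type formula for distributions on a product of pointed schemes (cf.\ \cite[I.7.4]{jantzen}) then yields a filtered vector space isomorphism
\[
\Dist(H \ltimes U) \;\cong\; \Dist(H) \otimes \Dist(U),
\]
so that $\dim \Dist_n(H \ltimes U)$ is entirely determined by the filtered dimensions of $\Dist(H)$ and of $\Dist(U)$.

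Applying this to~\eqref{eqn:levi}, it suffices to establish, for every $n \geq 0$,
\[
\dim \Dist_n(G^x_{\K, \text{unip}}) = \dim \Dist_n(G^x_{\F, \text{unip}}) \quad \text{and} \quad \dim \Dist_n(G^x_{\K, \text{red}}) = \dim \Dist_n(G^x_{\F, \text{red}}).
\]
The first identity is immediate from~\eqref{eqn:unipotent-distributions} together with the explicit computation in \cite[I.7.3]{jantzen}: the distribution algebra of affine $d$-space at the origin is the divided-power algebra in $d$ variables, whose filtered dimensions depend only on $d$ and not on the base field. For the second identity, I would invoke the fact recalled after~\eqref{eqn:levi} that $G^x_{\K, \text{red}}$ and $G^x_{\F, \text{red}}$ are split reductive with the same root datum. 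By Chevalley's construction they arise by base change from a common $\Z$-form, and the Kostant integral form of the distribution algebra then provides a compatible $\Z$-basis at each filtered level whose rank depends only on the root datum; base changing to $\K$ or $\F$ therefore preserves these ranks.

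The main obstacle I anticipate is checking the compatibility of $\Dist$ with the semidirect product structure carefully; however, since this reduces to the scheme-level identification $H \ltimes U \cong H \times U$ together with the standard Künneth formula for distributions at a point of a product, this step should be routine.
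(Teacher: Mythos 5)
Your proposal is correct and matches the paper's argument in all essentials: both use the Levi decompositions \eqref{eqn:levi}, reduce via a K\"unneth decomposition of distributions to the reductive and unipotent factors, handle the unipotent part with \eqref{eqn:unipotent-distributions}, and handle the reductive part by observing that the common root datum gives a Kostant integral form whose filtered ranks are independent of the base field. The only cosmetic difference is that you spell out the scheme-level identification $H \ltimes U \cong H \times U$ needed before applying K\"unneth, a step the paper leaves implicit in the phrase ``tensoring both sides with the respective unipotent distribution algebras.''
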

\begin{proof}
Apply \eqref{eqn:levi}, and observe that since $G^x_{\text{red}, \K}$ and $G^x_{\text{red}, \F}$ have the same root-datum, 
then the classification results from  \cite[II.1]{jantzen} imply
the existence of a Kostant $\O$-form $\uenv_{\O} \subseteq \Dist(G^x_{\text{red}, \K})$ which satisfies
$\F \otimes_{\O} \uenv_{\O} \cong \Dist(G^x_{\text{red},\F})$. So in particular, 
\[
\dim\, \Dist_n(G^x_{\text{red}, \K}) = \dim\, \Dist_n(G^x_{\text{red}, \F})
\]
for all $n\geq 0$.  
%By \eqref{eqn:unipotent-distributions}
%\[
% \Dist(U_{\K}) = \bigotimes_{i=1}^{d} \Dist(\Ga), \quad  \Dist(U_{\F}) = \bigotimes_{i=1}^{d} \Dist(\Ga)
%\]
%where $d = \dim\, \fn_{\K}$.

The result now follows by  tensoring both sides with the respective unipotent distribution algebras and applying \eqref{eqn:unipotent-distributions}. 
\end{proof}

%The strategy will be to use distribution algebras to first show that $G^x_{\text{tf},\F} \geq (G^x_{\F})^0$ (which immediately
%handles the connected centralizer case), and then deduce that it in fact must contain all of the connected components 
%over $\F$. 

\begin{defn}
By \cite[I.7.4]{jantzen}, an affine scheme $X$ over $\O$ is said to be \emph{infinitesimally flat} at
$z \in X(\O)$ provided $\O[X]/I_{z}^{n+1}$ are flat $\O$-modules for all $n\geq 0$, where $I_{z}$ is the
ideal corresponding to $z$. 
\end{defn}

The infinitesimal flatness property is necessary in order for distribution algebras to work ``nicely'' for an $\O$-group scheme.
\begin{lem}\label{lem:inf-flat}
Both $G^x$ and $G^x_{\text{tf}}$ are infinitesimally flat at $1 \in G^x_{\text{tf}}(\O) = G^x(\O)$, and
$I_{tor} \subseteq \bigcap_{n\geq 1} I_1^n$. 							
\end{lem}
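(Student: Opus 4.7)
The strategy is to exploit the fact that $\O$ is a DVR, so a finitely generated $\O$-module is flat iff it is torsion-free iff it is free. For each $n \geq 0$, I would set $M_n := \O[G^x]/I_1^{n+1}$. Since $G^x$ is of finite type over $\O$, the module $M_n$ is finitely generated over $\O$, so by the structure theorem we have a decomposition $M_n \cong F_n \oplus T_n$ with $F_n$ free of finite rank and $T_n$ a finite $\omega$-torsion module. The main task is to show $T_n = 0$.

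The key step is a dimension count after base change. Since $\O \to \K$ is flat and $\O \to \F$ is a surjection of rings, the standard identifications give
\[
M_n \otimes_{\O} \K \;\cong\; \K[G^x_\K]/I_{1,\K}^{n+1}, \qquad M_n \otimes_{\O} \F \;\cong\; \F[G^x_\F]/I_{1,\F}^{n+1},
\]
where for the second isomorphism one uses that the reduction $\O[G^x] \twoheadrightarrow \F[G^x_\F]$ is surjective, so carries $I_1^{n+1}$ onto $I_{1,\F}^{n+1}$. By the defining duality for distribution algebras (Jantzen I.7.4), the $\K$- and $\F$-dimensions of these quotients equal $\dim_\K \Dist_n(G^x_\K)$ and $\dim_\F \Dist_n(G^x_\F)$ respectively, which coincide by Lemma~\ref{lem:dist-compare}. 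On the other hand, the direct-sum decomposition yields $\dim_\K M_n \otimes_{\O} \K = \mathrm{rank}_\O F_n$ and $\dim_\F M_n \otimes_{\O} \F = \mathrm{rank}_\O F_n + \dim_\F(T_n \otimes_{\O} \F)$. Equating these forces $T_n \otimes_{\O} \F = 0$, and since every cyclic summand $\O/\omega^k$ has nonzero reduction modulo $\omega$, we conclude $T_n = 0$. Hence $M_n$ is free, so $G^x$ is infinitesimally flat at $1$.

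The remaining assertions then follow with almost no work. Any $f \in I_{\mathrm{tor}}$ is a torsion element of $\O[G^x]$, and its image in the torsion-free module $M_n$ must vanish; thus $f \in I_1^{n+1}$ for every $n \geq 0$, proving $I_{\mathrm{tor}} \subseteq \bigcap_{n \geq 1} I_1^n$. For $G^x_{\mathrm{tf}}$, write $\tilde I_1 := I_1/I_{\mathrm{tor}}$ for the augmentation ideal of $\O[G^x_{\mathrm{tf}}] = \O[G^x]/I_{\mathrm{tor}}$. The inclusion just established gives $\tilde I_1^{n+1} = I_1^{n+1}/I_{\mathrm{tor}}$, so $\O[G^x_{\mathrm{tf}}]/\tilde I_1^{n+1} \cong M_n$ is flat, yielding infinitesimal flatness of $G^x_{\mathrm{tf}}$ as well. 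The only genuinely delicate step in this plan is the careful base-change identification of $M_n \otimes_{\O} \F$ with $\F[G^x_\F]/I_{1,\F}^{n+1}$ and the translation of its dimension into $\dim_\F \Dist_n(G^x_\F)$; everything else is bookkeeping driven by Lemma~\ref{lem:dist-compare} and the DVR structure theorem.
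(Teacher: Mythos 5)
Your proof is correct and follows essentially the same route as the paper's: identify $M_n\otimes_\O\K$ and $M_n\otimes_\O\F$ with the corresponding truncated coordinate rings, invoke Lemma~\ref{lem:dist-compare} to match their dimensions, and use the DVR structure theorem to conclude $M_n$ is torsion-free, from which $I_{\mathrm{tor}}\subseteq\bigcap_n I_1^n$ and the infinitesimal flatness of $G^x_{\mathrm{tf}}$ drop out. The only difference is stylistic: you make the free-plus-torsion decomposition and the rank bookkeeping explicit, while the paper writes out the isomorphism $\F\otimes_\O(\O[G^x]/I_1^n)\cong\F[G^x]/I_{1,\F}^n$ in a more hands-on way and leaves the ``equal dimensions implies torsion-free'' step to the reader.
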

\begin{proof}
Let $I_{1,\K}$ and $I_{1,\F}$ be the augmentation ideals for the corresponding fibers. From the map
\[
\O[G^x] \rightarrow \F\otimes_{\O}\O[G^x] \cong \O[G^x]/\omega\O[G^x],
\]
we can see that $I_{1,\F} = \frac{I_1 + \omega\O[G^x]}{\omega\O[G^x]}$, and more generally,
\[
I_{1,\F}^n = \frac{I_1^n + \omega\O[G^x]}{\omega\O[G^x]}.
\]
This gives 
\[
  \F[G^x]/I_{1,\F}^n \cong \frac{\O[G^x]}{I_{1}^n + \omega\O[G^x]}.
\]
On the other hand, 
\[
\F \otimes_{\O} (\O[G^x]/I_1^n) \cong \frac{\O[G^x]/I_{1}^n}{\omega(\O[G^x]/I_{1}^n)}
   \cong \frac{\O[G^x]/I_{1}^n}{(\omega\O[G^x] + I_{1}^n)/I_{1}^n} \cong \F[G^x]/I_{1,\F}^n. 
\]
So there exist morphisms
\[
\K[G^x]/I_{1,\K}^n \cong \K \otimes_{\O}(\O[G^x]/I_1^n) \leftarrow \O[G^x]/I_1^n 
	\rightarrow \F\otimes_{\O}(\O[G^x]/I_1^n) \cong \F[G^x]/I_{1,\F}^n.
\]
By Lemma~\ref{lem:dist-compare}, the dimensions of the left and right side are equal, so that $\O[G^x]/I_1^n$ are torsion-free, and hence flat, 
for all $n$. Therefore, $G^x$ i infinitesimally flat at $1$. 
%Now if 
%$\O = \Z_p$, it can be immediately deduced that the middle-term must be torsion-free, and hence flat. The result for 
%arbitrary $\O$ by base-change. 
Finally,  since ${I_{\text{tor}} \subseteq \bigcap_{n\geq 1} I_1^n}$, it also follows that  $G^x_{\text{tf}}$ is infinitesimally flat at $1$. 
\end{proof}

The preceding lemma also implies that $\Dist_n(G^x_{\text{tf}}) \cong \Dist_n(G^x)$ for all $n\geq 0$. 
The infinitesimal flatness for both groups allows us to apply \cite[I.7.4(1)]{jantzen}, which gives
\begin{equation}\label{eqn:equal-dist_n}
 \Dist_n(G^x_{\text{tf},\F}) \cong \F \otimes_{\O} \Dist_n(G^x_{\text{tf}}) \cong  
     \F \otimes_{\O} \Dist_n(G^x) \cong \Dist_n(G^x_{\F}),
 \end{equation}
 for all $n \geq 0$. In particular, $\Dist(G^x_{\text{tf}, \F}) \cong \Dist(G^x_\F)$ as filtered algebras.

% This also implies that $G^x_{\text{tf},\F}$ is smooth since
% \[
% \dim G^x_{\text{tf},\F} = \dim (G^x_{\F})^{\circ} = \dim G^x_{\F}
% \]
% and $\text{Lie}\, G^x_{\text{tf},\F} = \text{Lie}\, G^x_{\F}.$
 
  We may now prove the main result of this section. 
 \begin{prop}\label{prop:connected}
 The group scheme $G^x_{\text{tf}}$ is smooth and ${(G^x_{\ol{\F}})^{\circ} \subseteq G^x_{\text{tf},\ol{\F}}}$. 
 %In particular, $G^x_{\F} = G^x_{\text{tf},\F}$ if connected. 
 \end{prop}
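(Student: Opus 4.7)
The plan is to verify the two conditions of Lemma~\ref{defn:smooth} for the morphism $G^x_{\text{tf}} \to \Spec(\O)$. Flatness over $\O$ is immediate since $G^x_{\text{tf}}$ is torsion-free over the DVR $\O$ by construction. Moreover, because $I_{\text{tor}} \otimes_{\O} \K = 0$, the generic fiber $(G^x_{\text{tf}})_\K$ coincides with $G^x_\K$, whose geometric base change is smooth by Remark~\ref{rmk:smooth-fiber}. The bulk of the work, therefore, is to verify smoothness of the special geometric fiber $G^x_{\text{tf},\ol{\F}}$.

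For the special fiber, I would first use that flatness of $G^x_{\text{tf}}$ over the one-dimensional base $\Spec(\O)$ forces the fiber dimensions to agree, so that $\dim G^x_{\text{tf},\F} = \dim (G^x_{\text{tf}})_\K = \dim G^x_\K$, and by the balanced hypothesis $\dim G^x_\K = \dim G^x_\F$. On the other hand, the isomorphism \eqref{eqn:equal-dist_n} (specialized to $n=1$) identifies the Lie algebra $\Lie(G^x_{\text{tf},\F})$ with $\Lie(G^x_\F)$, and since $G^x_\F$ is smooth by the balanced hypothesis, one has $\dim \Lie(G^x_\F) = \dim G^x_\F$. Combining these identities yields $\dim \Lie(G^x_{\text{tf},\F}) = \dim G^x_{\text{tf},\F}$, so $G^x_{\text{tf},\F}$ is smooth at the identity; being an $\F$-group scheme, translation then forces smoothness everywhere. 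Passing to the algebraic closure, $G^x_{\text{tf},\ol{\F}}$ is smooth, and an appeal to Lemma~\ref{defn:smooth} completes the smoothness assertion.

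For the inclusion $(G^x_{\ol{\F}})^{\circ} \subseteq G^x_{\text{tf},\ol{\F}}$, I would consider the identity component $(G^x_{\text{tf},\ol{\F}})^{\circ}$, which is a closed connected subgroup scheme of $(G^x_{\ol{\F}})^{\circ}$. Both are smooth and irreducible; their dimensions are equal, since $\dim (G^x_{\ol{\F}})^{\circ} = \dim G^x_{\ol{\F}} = \dim G^x_\F = \dim G^x_{\text{tf},\ol{\F}} = \dim (G^x_{\text{tf},\ol{\F}})^{\circ}$ by the first part together with smoothness. A smooth irreducible closed subvariety of a smooth irreducible variety of the same dimension must coincide with it, forcing $(G^x_{\text{tf},\ol{\F}})^{\circ} = (G^x_{\ol{\F}})^{\circ}$, and the desired inclusion follows.

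The main obstacle is the careful justification of the fiber-dimension equality for the flat morphism $G^x_{\text{tf}} \to \Spec(\O)$, since $G^x_{\text{tf}}$ is not a priori irreducible, and the passage from the Lie algebra dimension comparison (via \eqref{eqn:equal-dist_n}) to pointwise smoothness of the special fiber, where one needs to ensure that the distribution-algebra identification correctly captures the tangent space at the identity of the subgroup scheme $G^x_{\text{tf},\F}$ rather than of the ambient $G^x_\F$.
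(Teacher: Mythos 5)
Your approach is genuinely different from the paper's, and essentially correct, but it hinges on a claim that you yourself flag as the main obstacle and whose justification as written is incomplete.

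The paper proves the two halves of the proposition in the opposite order: it first uses the distribution-algebra comparison (the inclusion $\Dist(G^x_{\text{tf},\F}) \subseteq \Dist(G^x_\F)$ together with the dimension equalities of \eqref{eqn:equal-dist_n}) to conclude $\Dist(G^x_{\text{tf},\ol{\F}}) = \Dist(G^x_{\ol{\F}})$, and uses the criterion from \cite[I.7.17(7)]{jantzen} to deduce $(G^x_{\ol{\F}})^\circ \subseteq G^x_{\text{tf},\ol{\F}}$ directly. Only then does it read off $\dim G^x_{\text{tf},\ol{\F}} = \dim G^x_{\ol{\F}}$ from the resulting sandwich, and combine this with the Lie-algebra dimension comparison to get smoothness. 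You reverse this: you establish $\dim G^x_{\text{tf},\F} = \dim G^x_\F$ first (independent of distributions), deduce smoothness from the $n=1$ case of \eqref{eqn:equal-dist_n}, and finally get the inclusion by a dimension-counting argument between identity components.

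The weak point is precisely the step you worry about: ``flatness over the one-dimensional base forces the fiber dimensions to agree.'' Flatness of finite type over a DVR does \emph{not} by itself equalize fiber dimensions; e.g.\ $\Spec\K = \Spec(\O[x]/(\omega x - 1))$ is flat and finite type over $\Spec\O$ but has empty special fiber. What saves you is the identity section $e\colon \Spec\O \to G^x_{\text{tf}}$. Either invoke the fact that for a flat finite-type morphism the local fiber dimension $x \mapsto \dim_x X_{f(x)}$ is locally constant on the source (EGA IV, 12.1.1), applied along the connected image of $e$, or argue via upper semicontinuity (which needs no flatness): $\dim_{e_\F} G^x_{\text{tf},\F} \geq \dim_{e_\K} G^x_{\text{tf},\K}$, while the closed embedding $G^x_{\text{tf},\F}\hookrightarrow G^x_\F$ and the balanced hypothesis give the reverse inequality $\dim G^x_{\text{tf},\F} \leq \dim G^x_\F = \dim G^x_\K = \dim G^x_{\text{tf},\K}$. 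Either way one must also use that group schemes over a field are equidimensional, so $\dim_e$ computes the full dimension on both fibers. Once this is pinned down your argument goes through, and in fact the paper's reliance on the elaborate distribution-algebra machinery to establish the inclusion can be replaced by your simpler dimension count at the end.
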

 \begin{proof}
 By construction, $G^x_{\text{tf}, \F} \subseteq G^x_{\F}$, and thus by  \cite[I.7.17(7)]{jantzen},
 $\Dist(G^x_{\text{tf},\F}) \subseteq \Dist(G^x_{\F})$ as filtered algebras. So 
 $\Dist_n(G^x_{\text{tf},\F}) \subseteq \Dist_n(G^x_{\F})$ for all $n\geq 0$. 
  However, by \eqref{eqn:equal-dist_n}, it follows that $\dim_\F \Dist_n(G^x_{\text{tf},\F})= \dim_\F \Dist_n(G^x_{\F})$,
 and hence, $\Dist_n(G^x_{\text{tf},\F}) = \Dist_n(G^x_{\F})$ for all $n \geq 0$. Therefore, 
 \begin{equation}\label{eqn:equal-dist}
 \Dist(G^x_{\text{tf},\F}) = \Dist(G^x_\F).
 \end{equation}
 
 Let us now base change to $\ol{\F}$, where we note that \eqref{eqn:equal-dist} also holds over $\ol{\F}$ by \cite[I.7.4(1)]{jantzen}. 
 The definition of the identity component implies $1_{G^x_{\ol{\F}}} \in (G^x_{\ol{\F}})^\circ(\ol{\F})$, and so it follows that 
 $
\Dist((G^x_{\ol{\F}})^\circ) = \Dist(G^x_{\text{tf},\ol{\F}}).
$
Now since $(G^x_{\ol{\F}})^\circ$ is an irreducible subgroup scheme of $G^x_{\ol{\F}}$, then by 
\eqref{eqn:equal-dist} and \cite[I.7.17(7)]{jantzen}, 
$
(G^x_{\ol{\F}})^\circ \subseteq G^x_{\text{tf}, \ol{\F}},
$
proving the second statement of the proposition.

To prove the first statement, we first recall that by \cite[I.7.1(2)]{jantzen}, 
 \[
 \dim_\F \Dist^+_n(G^x_{\text{tf},\ol{\F}})= \dim_\F \Dist^+_n(G^x_{\ol{\F}})
 \]
 for all $n \geq 1$. Thus, by \cite[I.7.7]{jantzen}, the Lie algebras of $G^x_{\text{tf}, \ol{\F}}$ and $G^x_{\ol{\F}}$ have the same dimension. 
 On the other hand, 
 \[
 (G^x_{\ol{\F}})^\circ \subseteq G^x_{\text{tf}, \ol{\F}} \subseteq G^x_{\ol{\F}}
 \]
 implies $\dim G^x_{\text{tf},\ol{\F}} = \dim G^x_{\ol{\F}}$, and hence, $G^x_{\text{tf}, \ol{\F}}$ is smooth by \cite[I.7.17(1)]{jantzen}.
 Likewise, $G^x_{\text{tf}, \ol{\K}}$ is automatically smooth by \cite[I.7.17(2)]{jantzen}. Thus, by Lemma~\ref{defn:smooth}, $G^x_{\text{tf}}$ 
 will be smooth provided $\O[G^x_{\text{tf}}]$ is torsion-free. However, this property holds from the definition of $G^x_{\text{tf}}$, and so we are done. 
%Now since the identity component $(G^x_{\ol{\F}})^{\circ}$ is an irreducible subgroup scheme of $G^x_{\ol{\F}}$, then by 
%\eqref{eqn:equal-dist}, it follows that 
%$
%\Dist((G^x_{\ol{\F}})^\circ) \subseteq \Dist(G^x_{\text{tf},\ol{\F}}).
%$
%(In fact, the ``$\subseteq$" can be strengthened to ``$=$",  since the distribution algebra only detects the identity component.)
% Applying \cite[I.7.17(7)]{jantzen} again gives
% $(G^x_\F)^\circ \subseteq G^x_{\text{tf}, \F}$. 
 \end{proof}

 %%%%%%%%%%%%%%%%%%%%%%%%%%%%%%%%%%%%%%%%%%%%%%%%%%%%%%%%%%%%%%%%%%%%%%%%%%%
\section{Smoothness and component groups of centralizers}
%%%%%%%%%%%%%%%%%%%%%%%%%%%%%%%%%%%%%%%%%%%%%%%%%%%%%%%%%%%%%%%%%%%%%%%%%%%

%Let $G$ be a split semisimple (or reductive by \ref{subsec:red-group}) group of any isogeny type over $\O$, and let $p$ be a very good prime for $G$.  Choose a Borel subgroup $B$ and a maximal torus $T$.  We have
%\[
%  G_{\K} \longleftarrow G \longrightarrow G_{\F}
%\]
%where $G_{\K}$ and $G_{\F}$ are the base changes of $G$ to $\K$ and $\F$ respectively. Assume that $p$ is a good prime for
%$G$.  Let $\fg_{\O}$ be the Lie algebra
%of $G$ and let $x \in \fg_{\O}$ be a \emph{balanced nilpotent section} as in \cite[\S 4]{mcninch2016} (i.e. the base changes $x_{\F}$ and $x_{\K}$ have
%the same Bala-Carter data). 

% We will also assume (replacing $\O$ and $\K$ with finite extensions if necessary), that 
%if $|A(x_{\K})| = m$, then 
%\[
% \K[G^x_{\K}] = \K[(G^x_{\K})^{\circ}] \times \K[(G^x_{\K})^{1}] \times \cdots \times \K[(G^x_{\K})^{m-1}].
%\]
%(In other words, the indicator functions for each connected component exist in $\K[G^x_\K]$). 

%In this section, we will again fix an balanced nilpotent section $x \in \fg_\O$. 
To simplify our arguments, we will assume throughout this section that $\O$ is such that $\F=\ol{\F}$ is algebraically closed, 
unless specified otherwise. 
%(Even with this assumption, we will still sometimes use the notation $\ol{\F}$ in place of $\F$).
%Our goal is to prove that
%\[
%G^x \rightarrow \Spec(\O)
%\]
%is a \emph{smooth morphism}. 
%This is equivalent to proving that $\O[G^x]$ is a flat $\O$-module by 
% \cite[\href{https://stacks.math.columbia.edu/tag/01V8}{Tag 01V8}]{stacks-project}, since the fibers over $\K$ and $\F$ are already smooth according to the definition of $x$. 

 % The strategy will be to eventually reduce this problem to a simple calculation involving 
% diagonalizable group schemes over $\O$. 
%To do this, it will first be necessary to establish some results on diagonalizable group schemes. 
 \subsection{Diagonalizable group schemes}\label{ssec:diag-grp-schms}
 For any commutative group $\Lambda$ and a commutative ring $\bk$, the \emph{diagonalizable group scheme} over $\bk$ associated to $\Lambda$, denoted
  $\Diag(\Lambda)$, is defined by setting $\bk[\Diag(\Lambda)] := \bk[\Lambda]$, where $\bk[\Lambda]$ is the group algebra for $\Lambda$ with
  the usual Hopf algebra structure (cf. \cite[I.2.5]{jantzen}). 
  %The following lemma is likely well known, however we have included a proof due to lack of a suitable reference.  
\begin{lem}\label{lem:diag-pullback}
Let $\bk$ be a commutative integral ring, and suppose 
$X= \Diag(\Lambda_X)$, $Y = \Diag(\Lambda_Y)$ and $Z = \Diag(\Lambda_Z)$ are diagonalizable group schemes over $\bk$ with 
morphisms $X \xrightarrow{\varphi} Z$ and $Y \xrightarrow{\psi} Z$, then  
\[
X \times_Z Y \cong \Diag(\Lambda),
\]
where $\Lambda$ is a pushout induced by two uniquely determined group homomorphisms $\Lambda_Z \xrightarrow{\varphi'} \Lambda_X$ and 
$\Lambda_Z \xrightarrow{\psi'} \Lambda_Y$. 
\end{lem}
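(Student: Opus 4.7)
The plan is to work on the coordinate ring side, where the fiber product becomes a tensor product, and then exploit that the group-algebra functor $\bk[-]$ is left adjoint to the group-likes functor in order to identify this tensor product with the group algebra of the pushout.

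First I would construct the homomorphisms $\varphi'$ and $\psi'$ together with their uniqueness. By definition, $\varphi$ corresponds to a morphism of Hopf $\bk$-algebras $\varphi^{\ast} : \bk[\Lambda_Z] \to \bk[\Lambda_X]$. The key input is that the set of group-like elements of $\bk[\Lambda_X]$ equals $\Lambda_X$: writing $\xi = \sum_{g} a_g g$ and expanding $\Delta(\xi) = \xi \otimes \xi$ in the basis $\{g \otimes h\}$ forces the $a_g$ to be pairwise orthogonal idempotents summing to $1$, and integrality of $\bk$ forces exactly one $a_g$ to equal $1$. Since Hopf maps preserve group-likes, $\varphi^{\ast}$ restricts to a group homomorphism $\varphi' : \Lambda_Z \to \Lambda_X$, which determines $\varphi^{\ast}$ because $\Lambda_Z$ is a $\bk$-basis of $\bk[\Lambda_Z]$. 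The same reasoning produces a unique $\psi'$.

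Next, the fiber product of affine schemes yields
\[
X \times_Z Y = \Spec\bigl(\bk[\Lambda_X] \otimes_{\bk[\Lambda_Z]} \bk[\Lambda_Y]\bigr)
\]
equipped with the natural tensor Hopf structure, so it remains to identify this Hopf algebra with $\bk[\Lambda]$ for the pushout $\Lambda$ of $\Lambda_X \xleftarrow{\varphi'} \Lambda_Z \xrightarrow{\psi'} \Lambda_Y$ in abelian groups. For this I would invoke that the functor $\bk[-]$ from abelian groups to commutative Hopf $\bk$-algebras is left adjoint to the group-likes functor (by the same identification above), hence preserves colimits; in particular it sends the abelian-group pushout $\Lambda$ to the pushout of $\bk[\Lambda_X] \leftarrow \bk[\Lambda_Z] \rightarrow \bk[\Lambda_Y]$, which is precisely the tensor product $\bk[\Lambda_X] \otimes_{\bk[\Lambda_Z]} \bk[\Lambda_Y]$. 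Applying $\Spec$ then yields $X \times_Z Y \cong \Diag(\Lambda)$.

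The main obstacle is the group-like identification of $\bk[\Lambda]$ with $\Lambda$: it is the hinge for both uniqueness of $\varphi', \psi'$ and for the left-adjoint property of $\bk[-]$. This is exactly where the integrality of $\bk$ enters — in a ring with nontrivial idempotents one can produce ``spurious'' group-likes and the functor $\Diag$ ceases to be fully faithful, breaking the proof.
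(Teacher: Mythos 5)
Your proof is correct, but it takes a genuinely different route from the paper's. The paper cites \cite[I.2.5(2)]{jantzen} for the uniqueness of $\varphi'$ and $\psi'$, and then proceeds by bare hands: it writes out the pushout group $\Lambda$ explicitly as a quotient of $\Lambda_X \times \Lambda_Y$, identifies $\bk[\Lambda_X]\otimes_\bk\bk[\Lambda_Y]$ with $\bk[\Lambda_X\times\Lambda_Y]$, and then matches the generators of the defining ideal of the tensor product $\bk[\Lambda_X]\otimes_{\bk[\Lambda_Z]}\bk[\Lambda_Y]$, one by one, against the relations defining the group pushout. You instead prove directly that the group-likes of $\bk[\Lambda']$ are exactly $\Lambda'$ when $\bk$ is integral, and then invoke the adjunction $\bk[-]\dashv(\text{group-likes})$ between abelian groups and commutative Hopf $\bk$-algebras, using that left adjoints preserve colimits and that pushouts of commutative Hopf algebras are tensor products. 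The categorical argument is cleaner and more conceptual, and it isolates exactly where integrality enters (only in recovering $\varphi'$, $\psi'$ from the scheme maps, and in making $\Diag$ fully faithful so those lifts are unique); the adjunction itself and the colimit-preservation step hold over any commutative ring. The paper's approach is more elementary and self-contained, requiring only direct manipulation of bases. One small thing worth stating explicitly in your write-up: you should verify (or at least assert) that the tensor product is indeed the pushout in the category of commutative Hopf $\bk$-algebras, not merely in commutative $\bk$-algebras; this is standard but is the load-bearing step for the colimit-preservation argument.
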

\begin{proof}
Since $\bk$ is integral, then the isomorphism  \cite[I.2.5(2)]{jantzen} ensures that the morphisms $\varphi$ and $\psi$ are induced by 
unique group homomorphisms 
$\Lambda_Z \xrightarrow{\varphi'} \Lambda_X$ and 
$\Lambda_Z \xrightarrow{\psi'} \Lambda_Y$. 
More precisely, if we identify $\bk[X] = \bk[\Lambda_X]$, $\bk[Y] = \bk[\Lambda_Y]$ and $\bk[Z] = \bk[\Lambda_Z]$, where the Hopf algebras on the right 
are the group algebras for $\Lambda_X$, $\Lambda_Y$ and $\Lambda_Z$ respectively, then the comorphisms $\varphi^*$ and $\psi^*$ are 
the group algebra homomorphisms induced by $\varphi'$ and $\psi'$ respectively. 

Now let $\Lambda$ be the pushout of $\varphi'$ and $\psi'$, then 
\begin{equation}\label{eqn:group-pushout}
\Lambda = \frac{\Lambda_X \times \Lambda_Y}{\la (\varphi'(\lambda_Z),1)(1,\psi'(\lambda_Z)^{-1}) \, \mid \, \lambda_Z \in \Lambda_Z  \ra},
\end{equation}
where we use multiplicative notation to denote the group structure. 
Let us  employ the natural identification $\Lambda' \subseteq \bk[\Lambda']$ for an abelian group $\Lambda'$ (i.e. identifying $\Lambda'$ with the
``group-like" elements of $\bk[\Lambda']$), so that by definition, $\Lambda'$ gives a $\bk$-basis of $\bk[\Lambda']$. 
We will also make use of the isomorphism of algebras 
\begin{equation}\label{eqn:tensor-direct-product}
\bk[\Lambda_X] \otimes_\bk \bk[\Lambda_Y] \xrightarrow{\sim} \bk[\Lambda_X \times \Lambda_Y],
\end{equation}
induced by sending $\lambda_X \otimes_\bk \lambda_Y \mapsto (\lambda_X,\lambda_Y)$ for any $\lambda_X \in \Lambda_X$ and $\lambda_Y \in \Lambda_Y$. 

Now let us note that the tensor product $\bk[\Lambda_X]\otimes_{\bk[\Lambda_Z]}\bk[\Lambda_Y]$ coincides with the pushout along $\varphi^*$ and $\psi^*$ in the 
category of commutative $\bk$-algebras. 
By definition $\bk[\Lambda_X]\otimes_{\bk[\Lambda_Z]}\bk[\Lambda_Y] \cong \bk[\Lambda_X] \otimes_\bk \bk[\Lambda_Y]/I$, where $I$ is the $\bk$-submodule
generated by $f\phi^*(h)\otimes g - f\otimes \psi^*(h)g$ for all $f \in \bk[X]$, $g \in \bk[Y]$ and $h \in \bk[Z]$. 
In particular, $I$ is spanned by 
 \[
 \lambda_X\varphi'(\lambda_Z)\otimes \lambda_Y - \lambda_X \otimes \psi'(\lambda_Z)\lambda_Y,
 \]
 for all $\lambda_X$, $\lambda_Y$ and $\lambda_Z$. 

Observe now that
 \begin{align*}
  \lambda_X\varphi'(\lambda_Z)\otimes \lambda_Y = \lambda_X \otimes \psi'(\lambda_Z)\lambda_Y &\iff  \varphi'(\lambda_Z) \otimes \psi'(\lambda_Z)^{-1} = 1\otimes 1 \\
  	&\iff (\varphi'(\lambda_Z),\psi'(\lambda_Z)^{-1}) = 1
 \end{align*}
for $\lambda_X$, $\lambda_Y$ and $\lambda_Z$, where the second ``$\iff$" arises from \eqref{eqn:tensor-direct-product}. From this, we can see that $\bk[\Lambda_X]\otimes_{\bk[\Lambda_Z]}\bk[\Lambda_Y]$ 
 can be equivalently obtained from $\bk[\Lambda_X \times \Lambda_Y]$ by imposing the relation 
 $(\varphi'(\lambda_Z),\psi'(\lambda_Z)^{-1}) = 1$ for all $\lambda_Z \in \Lambda_Z$. Comparing with the description in \eqref{eqn:group-pushout}
gives the group algebra $\bk[\Lambda]$. Therefore, we are done.
% there exists a 
%surjective morphism
%\[
%\bk[\Lambda_X] \otimes_\bk \bk[\Lambda_Y]  \twoheadrightarrow \bk[\Lambda_X] \otimes_{\bk[\Lambda_Z]} \bk[\Lambda_Y],
%\]
%whose kernel contains $(\l
\end{proof}

  Let $\bk$ be any commutative ring, then we say that an affine $\bk$-scheme $X$ is \emph{constant}, provided the associated $\bk$-functor 
 \[
 X: \{\bk\text{-algebras}\} \rightarrow \{\text{sets}\}
 \]
 is a constant functor. If the cardinality $|X(\bk)| = r < \infty$, then there exists an algebra isomorphism
  $\bk[X] \cong \bk^{\times r}$, where the right hand side is an $r$-fold direct product with pointwise 
 multiplication. 
 An affine group scheme is called constant if it is constant as a scheme. 
  
 \begin{lem}\label{lem:diag-constant}
 Let $\Lambda$ be a finite abelian group with $r = |\Lambda|$. If $p \nmid r$, then $\Diag(\Lambda)$ is a constant $\O$-group scheme 
 (i.e. $\O[\Lambda] \cong \O^{\times r}$ as an $\O$-algebra). 
 \end{lem}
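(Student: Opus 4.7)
The plan is to reduce the claim to a statement about cyclic groups and then use Hensel's lemma, together with the assumption $\F = \ol{\F}$, to lift roots of unity from the residue field to $\O$.

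First, by the structure theorem for finite abelian groups, we may write $\Lambda \cong \prod_{i=1}^k \Z/n_i \Z$, with each $n_i$ dividing $r$, so that $\O[\Lambda] \cong \bigotimes_{i=1}^k \O[\Z/n_i \Z]$ as $\O$-algebras. Since tensor products distribute over finite direct products, it suffices to prove the statement for a single cyclic factor: namely, if $p \nmid n$, then $\O[t]/(t^n - 1) \cong \O^{\times n}$ as $\O$-algebras.

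For this, I would first observe that in $\F[t]$, the polynomial $t^n - 1$ has derivative $n t^{n-1}$, which is coprime to $t^n - 1$ because $n$ is invertible in $\F$. Thus $t^n - 1$ has $n$ distinct roots in $\ol{\F} = \F$. Each such root $\ol{\zeta} \in \F$ satisfies $(t^n - 1)'(\ol{\zeta}) = n \ol{\zeta}^{n-1} \in \F^\times$, so Hensel's lemma (which applies since $\O$ is a complete DVR) produces a unique lift $\zeta \in \O$ with $\zeta^n = 1$ and $\zeta \equiv \ol{\zeta} \pmod{\omega}$. This yields $n$ distinct elements $\zeta_1, \dots, \zeta_n \in \O$ with $\zeta_i^n = 1$; moreover, each difference $\zeta_i - \zeta_j$ ($i \neq j$) reduces to a nonzero element of $\F$ and is therefore a unit of $\O$.

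Consequently, in $\O[t]$ we have the factorization $t^n - 1 = \prod_{i=1}^n (t - \zeta_i)$, and the ideals $(t - \zeta_i)$ are pairwise comaximal (since $\zeta_i - \zeta_j \in \O^\times$). The Chinese Remainder Theorem then gives
\[
\O[t]/(t^n - 1) \;\xrightarrow{\sim}\; \prod_{i=1}^n \O[t]/(t - \zeta_i) \;\cong\; \O^{\times n},
\]
establishing the claim for cyclic factors, and hence for $\Lambda$. The only substantive step is the lifting of roots of unity via Hensel's lemma, and this is where the assumptions that $\O$ is a complete DVR with algebraically closed residue field, and that $p \nmid r$, are essential.
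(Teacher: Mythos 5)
Your proposal is correct and takes essentially the same approach as the paper: reduce to the cyclic case via the structure theorem, use $\F = \ol{\F}$ (with $p \nmid n$) to obtain $n$ distinct roots of unity in $\O$, and factor $t^n - 1$ into coprime linear factors. The only presentational differences are that you make the Hensel-lemma lifting step explicit (the paper invokes it implicitly when asserting $\O$ contains the relevant roots of unity) and you conclude via the Chinese Remainder Theorem, whereas the paper writes down the evaluation map directly and verifies surjectivity by hand using the Lagrange interpolation polynomials $f_i = \prod_{j \neq i}(z - \zeta_j)$.
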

 \begin{proof}
 Since $\Lambda$ is a finite abelian group, then 
 \[
 \Lambda \cong  \Z/n_1\Z \times  \Z/n_2\Z \times \cdots \times  \Z/n_t\Z,
 \]
 where the $n_1,\dots, n_t \in \Z$ may have repeated multiplicities. In particular, 
 \[
 \O[\Lambda] \cong \O[\Z/n_1\Z] \otimes  \O[\Z/n_2\Z] \otimes \cdots \otimes  \O[\Z/n_t\Z].
 \]
 Thus, $\Diag(\Lambda)$ is constant if $\Diag(\Z/n_i\Z)$ is constant for all $i$. The condition
 $p \nmid r$ clearly implies $p\nmid n_i$ for all $i$. Therefore, without loss of generality, we may assume 
 $\Lambda$ is cyclic. 
 
 Supposing now that $\Lambda = \Z/r\Z$, gives 
 \[
 \O[\Lambda] = \O[z]/\la z^r-1\ra.
 \]
 Our assumption on $\O$ at the beginning of this section, implies that $\O$ (and hence $\K$) contain all roots of unity which are co-prime to $p$. %CITE?
 Thus,
 \[
 z^r-1 = (z-\zeta_1)(z-\zeta_2)\cdots (z-\zeta_r),
 \]
 where $\zeta_1,\dots,\zeta_r \in \O^*\subset \K^*$ are the primitive $r$-th roots of unity. 
 We now define a ring homomorphism 
 \[
 \begin{aligned}
 \varphi: \O&[\Lambda] \rightarrow \O^{\times r} \\
                f &\mapsto (f(\zeta_1),f(\zeta_2),\dots, f(\zeta_r)). 
 \end{aligned}
 \]
 The result will follow if we can prove that $\varphi$ is an isomorphism. 
 
 It suffices to prove that $\varphi$ is surjective as an $\O$-module homomorphism. For $i=1,\dots, r$, set 
 $f_i = \prod_{j\neq i}(z-\zeta_j)$. Then $f(\zeta_j) = 0$ for all $j\neq i$ and 
 \[
 f_i(\zeta_i) = \prod_{j\neq i}(\zeta_i-\zeta_j).
 \]
Our assumption $p \nmid r$ implies that the reductions $\ol{\zeta_1},\ol{\zeta_2},\dots \ol{\zeta_r} \in \F = \O/\la \omega\ra$ are 
 all distinct (where $\omega \in \O$ is the uniformizer). In particular, for $i\neq j$, $\ol{\zeta_i} - \ol{\zeta_j} \neq 0$, and hence, 
 $\omega \nmid (\zeta_i - \zeta_j)$ so that $(\zeta_i - \zeta_j)$ is invertible in $\O$. Thus, $f_i(\zeta_i) \in \O$ is also invertible for all $i$. 
 
 Finally, if $\epsilon_i \in \O^{\times r}$ denotes the $i$-th coordinate function for $i=1,\dots, r$, then 
 $
 \varphi(f_i) = f_i(\zeta_i)\epsilon_i
 $
 for all $i$. The invertibility of the $f_i(\zeta_i)$ imply that the $ f_i(\zeta_i)\epsilon_i$ form a basis for the free $\O$-module $\O^{\times r}$, and therefore, 
 $\varphi$ is surjective. 
 \end{proof}
 
 %The following general property of diagonalizable group schemes will be crucial.

%For the rest of this section, assume $\O$, $\K$ and $\F$ satisfy the conditions of the following lemma. 
%\begin{lem}\label{lem:comp-groups-enlarge}
%We can replace $\O$ with a finite flat integral extension such that 
%$A(x_\K) \cong A(x_\K)_{\ol{\K}}$ and $A(x_\F) \cong A(x_\F)_{\ol{\F}}$. 
%\end{lem}

%We will also employ the following assumption involving the component groups. 
%\begin{assump}\label{assump:comp-group-iso}
%There is a (discrete) group isomorphism $A(x_\K) \cong A(x_\F)$. 
%\end{assump}
%Throughout the remainder of this section, we will enumerate a number of cases for which this assumption holds. 

%We will also require the following result, whose proof we have included for lack of a suitable reference. 
%\begin{lem}\label{lem:constant-group-scheme}
%Let $A$ be a Noetherian ring, and suppose that $Z$ is a constant $A$-group scheme associated to a finite abelian group, 
% then  any $A$-subgroup scheme  $Z' \subseteq Z$ is also a constant group scheme. 
%\end{lem}

Following the convention from \cite[p.~5]{mcninch2016}, we say that a subgroup scheme $S \subseteq H$ of an $\O$-group scheme 
$H$ is a \emph{maximal torus} if the subgroup schemes $S_\K \subseteq H_\K$ and $S_\F \subseteq H_\F$ are both maximal tori. 

\begin{lem}\label{lem:torus-intersect}
Let $C_{\F}$ be a nilpotent orbit, then there exists a balanced nilpotent section $x \in \fg_\O$ such that 
$x_{\F} \in C_{\F}$ and 
$S_\F  = (T_\F \cap G^x_\F)^\circ$ is a maximal torus for $G^x_\F$.
\end{lem}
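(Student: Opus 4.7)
The plan is to start from any balanced nilpotent section whose closed fiber lies in $C_\F$ and then adjust it by a $G(\O)$-conjugation to force $T_\F$ to capture a maximal torus of the $\F$-centralizer.

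First, by \cite[Theorem~4.5.2]{mcninch2016} there exists at least one balanced nilpotent section $y \in \fg_\O$ with $y_\F \in C_\F$. The Levi decomposition \eqref{eqn:levi} provides a maximal torus $S'_\F$ of $G^{y}_\F$, which is in particular a subtorus of the reductive group $G_\F$. Since all maximal tori of $G_\F$ are conjugate over $\F = \ol{\F}$, and any subtorus is contained in some maximal torus, there exists $h_\F \in G_\F(\F)$ with $h_\F S'_\F h_\F^{-1} \subseteq T_\F$.

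Next I would lift $h_\F$ to an $\O$-point $g \in G(\O)$. Because $G$ is smooth over the complete DVR $\O$, the reduction map $G(\O) \twoheadrightarrow G(\F)$ is surjective (Hensel's lemma), so such a lift exists. Define $x := \mathrm{Ad}(g)(y) \in \fg_\O$. The adjoint $G(\O)$-action preserves $\fg_\O$, and the fibers $x_\K$ and $x_\F$ are $G_\K(\K)$- and $G_\F(\F)$-conjugate to $y_\K$ and $y_\F$ respectively; hence $x$ remains a balanced nilpotent section, $x_\F \in C_\F$, and
\[
S^*_\F := h_\F S'_\F h_\F^{-1} \subseteq T_\F \cap G^{x}_\F
\]
is a maximal torus of $G^x_\F = h_\F G^{y}_\F h_\F^{-1}$.

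Finally, I would identify $S_\F = (T_\F \cap G^x_\F)^\circ$ with $S^*_\F$. The scheme-theoretic intersection $T_\F \cap G^x_\F$ is a closed subgroup scheme of the torus $T_\F$, hence diagonalizable (of multiplicative type), so its identity component is a subtorus of $G^x_\F$. This identity component contains $S^*_\F$, and by the maximality of $S^*_\F$ among subtori of $G^x_\F$ we must have $(T_\F \cap G^x_\F)^\circ = S^*_\F$, a maximal torus. The main technical point is the Hensel lifting of $h_\F$ to $g \in G(\O)$; the rest consists of standard facts about centralizers, conjugacy of maximal tori, and closed subgroup schemes of diagonalizable groups.
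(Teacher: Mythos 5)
Your construction of $x$ is fine — you lift the conjugating element $h_\F$ to $g \in G(\O)$ by Hensel's lemma and set $x = \mathrm{Ad}(g)(y)$, whereas the paper conjugates $y_\F$ over $\F$ and then re-applies McNinch's lifting theorem to obtain a balanced lift of the conjugated element; both routes work. The reduction to showing $S^*_\F \subseteq (T_\F \cap G^x_\F)^\circ$ and invoking maximality of $S^*_\F$ among subtori is also the right idea.

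The gap is in the assertion that ``the scheme-theoretic intersection $T_\F \cap G^x_\F$ is a closed subgroup scheme of the torus $T_\F$, hence diagonalizable, so its identity component is a subtorus.'' This is false in general over a field of characteristic $p$: a diagonalizable group scheme has the form $\Diag(\Lambda)$, and if $\Lambda$ contains $p$-torsion then $\Diag(\Lambda)$ contains a factor $\mu_{p^a}$, which is connected but not reduced. In that case $\Diag(\Lambda)^\circ$ is not a torus at all, and the conclusion of the lemma would simply fail. The paper handles this precisely: it observes that $T_\F \cap G^x_\F \cong \Diag(\bX^{x_\F})$ where $\bX^{x_\F} = \bX / \langle \mu : (x_\F)_\mu \neq 0 \rangle$, and then proves that $\bX^{x_\F}$ has no $p$-torsion by invoking the geometrically standard hypothesis via Herpel's results (\cite[Definition 2.11 and Theorem 5.2]{her:scrg}). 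This reducedness of $T_\F \cap G^x_\F$ is exactly the nontrivial input that your argument silently assumes; without it the identity component need not be a torus, so you cannot appeal to maximality. You should make this reducedness explicit and supply a proof — this is where the standing hypothesis on $G_\F$ actually gets used.
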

\begin{proof}
By \cite[Theorem~1.2.1(a)]{mcninch2016}, any $x_\F \in C_{\F}$ lifts to some 
balanced nilpotent section $x \in \fg_\O$. Suppose now that $y_\F \in C_\F$ is arbitrary, and let $y \in \fg_\O$,
be a corresponding balanced nilpotent section which lifts $y_\F$.  If $S'_\F \subseteq G^{y_\F}_{\F}$ is a maximal torus for the centralizer,
then there must exist a maximal torus $T'_\F$ of $G_\F$ such that 
$S'_\F \subseteq T'_\F$. 
Now since the maximal tori for $G_\F$ are conjugate, then there exists $g \in G_\F(\F)$ such that 
$T_\F = gT'_\F g^{-1}$. 
 Let $x_\F = g^{-1}y_{\F} g$ and fix a balanced lift $x \in \fg_\O$,
 then $x_\F \in C_{\F}$ and 
$S_\F = g^{-1} S'_\F g \subseteq T_\F$,  must also be a maximal torus in $G^{x_\F}_\F$.
% since it is the conjugate of a maximal torus  $G^x_{\F}$.

It will follow that $S_\F = (T_\F \cap G^x_\F)^\circ$ provided $T_\F\cap G^x_\F$ is reduced. To see why this is true, let us first fix 
the root space decomposition
\[
\fg_\F = \ft_\F \oplus \bigoplus_{\alpha \in \Phi \subset \bX} (\fg_\F)_\alpha,
\]
so that the structure of $\fg_\F$ as a $T_\F$-module is given by the comodule map 
\begin{align*}
\Delta_{\fg_\F}: \fg_\F &\rightarrow \F[T] \otimes \fg_\F\\
v &\mapsto \sum_{\mu \in \Phi\cup \{0\}} \mu(t)\otimes v_\mu,
\end{align*}
where $v_\mu$ denotes the projection onto the $\mu$ weight space of $\fg_\F$. 
The centralizer of $x_\F$ in $T_\F$ is then determined by the abelian group 
\[
\bX^{x_\F} = \bX/\la \mu \in \bX \, \mid \,  (x_\F)_\mu \neq 0\ra,
\]
(i.e. $T\cap G^x_\F \cong \Diag(\bX^{x_\F})$). From the properties of diagonalizable group schemes, this is reduced if and only if 
$\bX^{x_\F}$ contains no $p$-torsion. However, the latter property follows immediately from \cite[Definition 2.11 and Theorem 5.2]{her:scrg}. 
%The connected component $S$ is then given by  $\bY = \bX^x/\bX^x_{\text{tor}}$ (i.e. $\F[S_\F] = \F[\bY]$ is the group algebra of $\bY$). 
\end{proof}

The following proposition will enable us to relate the torus characters between $G^x_\K$ and $G^x_\F$ representations. 
\begin{prop}\label{prop:lift-torus}
Let $x \in \fg_\O$ and $S_\F \subseteq G^x_\F$ be as in Lemma~\ref{lem:torus-intersect}, then $S_\F$ lifts to a 
split torus $S \subseteq G^x$ such that $S_\K \subseteq G^x_\K$ is a maximal torus which is split. 
\end{prop}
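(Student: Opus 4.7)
The strategy is to combine the smoothness of $G^x$ established in Theorem~\ref{thm:main-thm} with the classical theory of lifting tori in smooth affine group schemes over a henselian base, and then verify the desired properties on the generic fiber by a rank count.

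First, since $S_\F$ is connected, it lies in $(G^x_\F)^\circ$, which by Proposition~\ref{prop:connected} is contained in $G^x_{\text{tf},\F}$; moreover, $G^x_{\text{tf}} \rightarrow \Spec \O$ is smooth. It therefore suffices to produce the lift $S$ inside $G^x_{\text{tf}}$, where the deformation theory of smooth affine group schemes over $\O$ applies cleanly.

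Next, I would invoke the standard torus-lifting result for smooth affine group schemes over a henselian local base (cf.\ SGA~3, Exp.~IX): given such a group scheme $H$ over the complete (hence henselian) DVR $\O$ and a torus $\tau_\F \subseteq H_\F$, there exists a closed subtorus $\tau \subseteq H$ whose special fiber is $\tau_\F$. Applied with $H = G^x_{\text{tf}}$ and $\tau_\F = S_\F$, this yields an $\O$-torus $S \subseteq G^x_{\text{tf}} \subseteq G^x$ lifting $S_\F$. Since we assume throughout this section that $\F = \ol{\F}$, the DVR $\O$ is strictly henselian, so every $\O$-torus is automatically split; equivalently, the split $\F$-torus $S_\F$ lifts to a split $\O$-torus.

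Finally, I would verify that $S_\K$ is a maximal torus of $G^x_\K$. The rank of an $\O$-torus is constant across fibers, so $\dim S_\K = \dim S_\F$, which by Lemma~\ref{lem:torus-intersect} equals the rank of a maximal torus of $G^x_\F$. By Remark~\ref{rmk:bala-carter} together with \cite[Corollary~9.2.2]{mcninch2016}, the reductive quotients of $G^x_\K$ and $G^x_\F$ share the same root datum, so in particular their maximal tori have equal rank. Therefore $S_\K$ is a split subtorus of $G^x_\K$ of the maximal possible rank, and hence itself a maximal torus.

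The main obstacle is a careful invocation of the torus-lifting theorem for the smooth but possibly non-reductive (and possibly disconnected) group scheme $G^x_{\text{tf}}$; the SGA~3 result applies in this generality, but the hypotheses must be checked, or alternatively one should first pass to the relative identity component. A more hands-on alternative would be to construct $S$ as a subtorus of $T$ by lifting the character-lattice quotient $\bX/\Lambda$ defining $S_\F \subseteq T_\F$, but this requires that the section $x$ lies in the Lie algebra of $Z_G(S)$, which need not hold for the given $x$ and would require a further modification within its $G(\O)$-orbit.
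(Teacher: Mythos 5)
Your proof is essentially correct and follows the same route as the paper: reduce to $G^x_{\text{tf}}$ (using Proposition~\ref{prop:connected}), lift the split maximal torus $S_\F$ to an $\O$-torus inside the smooth group scheme $G^x_{\text{tf}}$ via Hensel-type deformation theory, observe the lift is split because $\O$ is strictly henselian, and conclude maximality of $S_\K$ from \cite[Corollary~9.2.2]{mcninch2016}. The paper cites \cite[Theorem~2.1.1(c)]{mcninch2016} (lifting the cocharacter $\Gm^r \to G^x_{\text{tf},\F}$) and then separately verifies the lift is a closed immersion following \cite[Lemma~4.1]{ahr:rtdrg}, whereas you invoke the SGA~3, Exp.~IX torus-lifting theorem which delivers a closed subtorus directly; these are equivalent in substance. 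You also show $S_\F \subseteq G^x_{\text{tf},\F}$ via connectedness and Proposition~\ref{prop:connected}, which is cleaner than the paper's distribution-algebra argument via \cite[I.7.17(7)]{jantzen}, but yields the same conclusion.

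One genuine issue you must fix: the opening sentence says the strategy is to use ``the smoothness of $G^x$ established in Theorem~\ref{thm:main-thm}''. This would be circular. Proposition~\ref{prop:lift-torus} feeds into Lemma~\ref{lem:simple-comp-group}, then Proposition~\ref{prop:comp-group}, and \emph{that} is used in \S\ref{subsec:disconnected-case} to prove Theorem~\ref{thm:main-thm}. You cannot invoke Theorem~\ref{thm:main-thm} here. Fortunately, the body of your argument does not actually use it: you work entirely inside $G^x_{\text{tf}}$, whose smoothness comes from Proposition~\ref{prop:connected}, which is logically prior. So the proof is sound once the misleading framing sentence is removed or corrected to refer to Proposition~\ref{prop:connected}.
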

\begin{proof}
First observe that since  $S_\F \subseteq G^x_\F$ is irreducible, and $ \Dist(G^x_\F) =\Dist(G^x_{\text{tf},\F})$ by \eqref{eqn:equal-dist}, then
from \cite[I.7.17(7)]{jantzen}, it follows that $S_\F \subseteq G^x_{\text{tf},\F}$. The fact that
$G^x_{\text{tf}}$ is smooth by Proposition~\ref{prop:connected} now allows us to apply \cite[Theorem~2.1.1(c)]{mcninch2016} 
to lift the embedding $\varphi: \Gm^r \hookrightarrow G^x_{\text{tf}, \F}$ (corresponding to the inclusion 
$S_\F \subseteq G^x_{\text{tf}, \F}$), to a map $\psi: \Gm^r \rightarrow G^x_{\text{tf}}\subseteq G^x$. Moreover, by the
same reasoning as in the proof of \cite[Lemma~4.1]{ahr:rtdrg}, it can be verified that $\psi$ is actually a closed embedding. 

Thus, if we set $S = \psi(\Gm^r) \subseteq G^x_{\text{tf}}$, where $S \cong \Gm^r$,
then  $S_\K = \psi_\K(\Gm^r) \subseteq G^x_{\K}$ is a split rank $r$-torus contained in $G^x_\K$. 
Finally, since the maximal tori for $G^x_\K$ and $G^x_\F$ have the same rank by \cite[Corollary 9.2.2]{mcninch2016}, then $S_\K$ must also be maximal. 
\end{proof}
\begin{rmk}\label{rmk:geo-fibers}
In particular, $S_{\ol{\K}} \subseteq (G^x_{\ol{\K}})^\circ$ and 
$S_{\F} \subseteq (G^x_{\F})^\circ$ are maximal tori for the connected components of the geometric fibers. 
\end{rmk}

In order to reduce our component group calculations to the case of diagonalizable group schemes, we will need the following proposition. 
\begin{prop}\label{prop:torus-intersect}
Let $Z = Z(G)$ denote the center of $G$, let $Z'\subseteq Z$ be any diagonalizable subgroup scheme, and let $S \subseteq G$ be any 
split $\O$-torus, then $S\cap Z' \subseteq G$ is diagonalizable. 
\end{prop}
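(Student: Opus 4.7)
The plan is to realize $S \cap Z'$ as a scheme-theoretic fiber product inside a common diagonalizable supergroup and then apply Lemma~\ref{lem:diag-pullback}. The natural candidate for such a supergroup is a maximal torus $T^* \subseteq G$ that contains $S$: for a (split) reductive group, the center $Z = Z(G)$ is contained in every maximal torus, so automatically $Z' \subseteq Z \subseteq T^*$ as well.

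The first (and main) step is to produce such a $T^*$. Over the residue field $\F = \ol{\F}$, the split torus $S_\F$ is contained in some maximal torus of $G_\F$, and by the conjugacy of maximal tori in $G_\F$ there exists $g_\F \in G(\F)$ with $g_\F^{-1} S_\F g_\F \subseteq T_\F$. A smooth lifting argument---using, for instance, the smoothness of the transporter scheme $\mathrm{Transp}_G(S, T)$ combined with the fact that $\O$ is Henselian---produces an element $g \in G(\O)$ with $g^{-1} S g \subseteq T$, so that $T^* := g T g^{-1}$ is a maximal torus of $G$ containing $S$. Because $\O$ is Henselian with separably closed residue field, every torus over $\O$ is automatically split, so $T^*$ is in fact diagonalizable, say $T^* \cong \Diag(\bY)$ for some free abelian group $\bY$.

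Once $S$ and $Z'$ are exhibited as closed subgroup schemes of the diagonalizable group scheme $T^*$, the scheme-theoretic intersection $S \cap Z'$ computed inside $G$ agrees with the fiber product $S \times_{T^*} Z'$ formed from the inclusions. Since $\O$ is an integral domain, Lemma~\ref{lem:diag-pullback} applies directly and identifies this fiber product with $\Diag(\Lambda)$ for the appropriate pushout $\Lambda$, giving the desired diagonalizability. The chief obstacle is the first step: translating the residue-field conjugacy statement into an honest containment of closed subgroup schemes over the DVR $\O$ requires a careful use of smoothness of the transporter scheme (or an equivalent Hensel-type lifting). Everything beyond this step---the containment $Z \subseteq T^*$ in a reductive group, and the appeal to Lemma~\ref{lem:diag-pullback}---is essentially formal.
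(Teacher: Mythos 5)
Your proposal is correct and reaches the same endgame as the paper --- exhibit $S$ and $Z'$ as closed subgroup schemes of a common split $\O$-torus and then invoke Lemma~\ref{lem:diag-pullback} --- but the way you produce that enclosing torus is a genuinely different route. You conjugate $S$ directly into the fixed split maximal torus $T$: the transporter $\mathrm{Transp}_G(S,T)$ is a smooth closed subscheme of $G$ (SGA3, Exp.~XI, Cor.~5.3, applicable since $S$ is of multiplicative type and finite type and $T$ is a smooth closed subgroup of the smooth affine $G$), so a residue-field point lifts by Hensel's lemma, and you set $T^* = gTg^{-1}$. The containments $S \subseteq T^*$ and $Z' \subseteq Z \subseteq T^*$ then hold over $\O$ \emph{by construction}, and $T^*$ is split because it is conjugate to $T$ by an $\O$-point (the side remark that any $\O$-torus is split is true here but not needed). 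The paper instead sets $H = C_G(S)$, cites McNinch's results to lift a maximal torus from $H_\F$ to a split torus $T' \subseteq H$, and then must do additional work --- checking $T'_\K$ is maximal, establishing the fiberwise containments $S_\bk, Z'_\bk \subseteq T'_\bk$, and running a torsion-freeness argument on coordinate rings --- to promote these to honest containments $S \subseteq T'$, $Z' \subseteq T'$ over $\O$. Your approach bypasses that entire step at the cost of importing the SGA3 transporter lemma, which the paper otherwise avoids; the paper's approach stays within the McNinch toolkit it already uses elsewhere. Both are valid.
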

\begin{proof}
First recall that the center $Z$ is diagonalizable (see \cite[II.1.6, II.1.8]{jantzen}). The strategy of the proof will be to first construct a split $\O$-torus $T' \subseteq G$ with $S\subseteq T'$ and $Z'\subseteq T'$, this will allow the intersection of 
$S\cap Z'$ to be taken \emph{inside} of $T'$. The result will then follow from Lemma~\ref{lem:diag-pullback}. 

To construct $T'$, first set $H = C_G(S)$, where $C_G(S)$ denotes the centralizer of $S$ in $G$. By \cite[Proposition~2.2.1]{mcninch2016}, $H$ is a smooth reductive
group scheme over $\O$ with connected fibers. 
%Also,  by definition, $H$ contains both $S$ and $Z$, and hence $Z'$. 
Now let 
$T'_{\F} \subseteq H_\F$ be a maximal split torus (of rank-$r$) for $H$ (recall that $\F = \ol{\F}$ by our assumption, so such a torus exists). As in the proof of 
Proposition~\ref{prop:lift-torus}, $T'_{\F}$ can be lifted to a rank-$r$ split torus of $H$. Let $T'$ denote this lift. Thus, $T'_\K \subseteq H_\K$ is a rank-$r$ split 
torus of $H_\K$. By \cite[Corollary~2.1.2]{mcninch2016}, the maximal tori for $H_\K$ and $H_\F$ must have the same dimension, and hence the same rank if they
are split. Therefore, $T'_\K$ is maximal in $H_\K$. 
%(In particular, $S_\K \subseteq T'_\K$ since $S_\K$ is central in $H$ and $T'_\K$ is a maximal torus.)

Let $Z(H) \subseteq H$ denote the center of $H$. By definition, $S\subseteq Z(H)$ and $Z \subseteq Z(H)$ (and hence $Z' \subseteq Z(H)$). 
For $\bk \in \{\K, \F\}$, 
\begin{equation}\label{eqn:k-containment}
Z(H)_\bk =Z(H_\bk) \subseteq T'_\bk,
\end{equation}
where the containment on the right holds because  $T'_\bk \subseteq H_\bk$ is a maximal torus over a field and $Z(H)_\bk$ is the center. 
%To show that $Z(H) \subseteq T'$, first consider the inclusion
%\[
%\varphi: Z(H)\cap T' \hookrightarrow Z(H). 
%\]
%Since base-change commutes with taking fiber products, and thus commutes with taking intersections, the base-changes of $\varphi$
%\[
%\varphi_\bk: Z(H_\bk) \cap T'_\bk \hookrightarrow Z(H_\bk)
%\]
%give the inclusion of the intersection over $\bk$. By \eqref{eqn:k-containment}, $\varphi_\bk$ is an isomorphism (i.e. is surjective). 
So in particular, $S_\bk \subseteq T'_\bk$. 

To prove $S \subseteq T'$, first consider the inclusion
\[
\varphi: S\cap T' \hookrightarrow S,
\]
and observe that $S \subseteq T'$ if and only if $\varphi$ is an isomorphism. 
Since base-change commutes with taking fiber products, and thus commutes with taking intersections, the base-changes of $\varphi$ to $\bk$,
\[
\varphi_\bk: (S \cap T')_\bk \rightarrow S_\bk,
\]
coincide with the inclusion $S_\bk \cap T'_\bk \hookrightarrow S_\bk$. Thus, by \eqref{eqn:k-containment}, $\varphi_\bk$ is an isomorphism (i.e. is surjective). 
On the level of algebras, $\O[S\cap T'] = \O[S] \otimes_{\O[H]}\O[T']$ where the comorphism 
$\varphi^*$ is surjective, so it suffices to show that $\varphi_*$ is injective (as an $\O$-module morphism). 
Base changing to $\K$ induces a commutative diagram
\[
\begin{CD}
     \O[S]    @>\varphi^*>>   \O[S]\otimes_{\O[H]}\O[T']   \\
         @V \psi VV           @VV \psi' V                                           \\ 
     \K[S]    @>\varphi^*_\K>\sim>    \K[S]\otimes_{\K[H]}\K[T'].
\end{CD}
\]
The diagonalizability of $S$ implies that $\O[S]$ is torsion-free, and hence $\psi = 1 \otimes \id$ must be injective. Thus 
$\varphi^*_\K \circ \psi$ is injective, and by commutativity, $\psi'\circ \varphi^*$ is injective. It then follows that
$\varphi^*$ must be injective, and is therefore an isomorphism. 

By a similar argument, we can show that the map $Z' \cap T' \hookrightarrow Z'$ is an isomorphism (using the fact that $Z'$ is diagonalizable), and thus 
$Z' \subseteq T'$. 

We have established that $S \subseteq T'$ and $Z' \subseteq T'$. This allows us to identify 
\[
S\cap Z' = S \times_{T'} Z'.
\]
Thus, it follows from Lemma~\ref{lem:diag-pullback} that $S\cap Z'$ is diagonalizable. 
%Let $\Lambda_S$, $\Lambda_{Z'}$ and $\Lambda_{T'}$ denote the character groups for $S$, $Z'$ and $T'$ respectively. By definition, their coordinate algebras
%identify with the group algebras $\O[\Lambda_S]$, $\O[\Lambda_{Z'}]$ and $\O[\Lambda_{T'}]$ respectively. It follows from \cite[I.2.5(2)]{jantzen}, that 
%the morphisms \\
%$\O[\Lambda_{T'}] \twoheadrightarrow \O[\Lambda_S]$ and $\O[\Lambda_{T'}] \twoheadrightarrow \O[\Lambda_{Z'}]$ are induced by unique group homomorphisms 
%$\Lambda_{T'} \twoheadrightarrow \Lambda_{S}$ and $\Lambda_{T'} \twoheadrightarrow \Lambda_{Z'}$ respectively. 
\end{proof}

 \subsection{Component groups of the geometric fibers}\label{subsec:comp-fibers}
We will now prove that the component groups $A(x_{\ol{\K}})$ and $A(x_\F)$ of the geometric fibers, have the same cardinality. 

\begin{lem}\label{lem:simple-comp-group}
If $G$ is a simple group (with $G_\F$ geometrically standard), then $|A(x_{\ol{\K}})|=|A(x_{\F})|$. 
\end{lem}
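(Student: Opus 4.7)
The plan is to reduce to the adjoint case, where the isomorphism $A(\bar{x}_{\ol{\K}}) \cong A(\bar{x}_{\ol{\F}})$ is already available from the cited work of McNinch--Sommers. Let $\pi \colon G \to G_{\mathrm{ad}} := G/Z$ denote the central isogeny, where $Z = Z(G)$ is the finite center of $G$. Under the geometrically standard hypothesis, $p \nmid |Z|$, so $Z$ is \'etale over $\O$, the induced map $d\pi \colon \fg \to \fg_{\mathrm{ad}}$ is an isomorphism of $\O$-Lie algebras, and $\bar{x} := d\pi(x)$ is again a balanced nilpotent section (with the same Bala--Carter label as $x$). Since $\ker(d\pi) = 0$, one obtains a short exact sequence of $\O$-group schemes
\[
1 \to Z \to G^x \to (G_{\mathrm{ad}})^{\bar{x}} \to 1,
\]
surjective on $\ol{\bk}$-points for $\ol{\bk} \in \{\ol{\K}, \ol{\F}\}$ because $Z$ is \'etale over an algebraically closed field.

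Taking component groups over each geometric fiber produces the exact sequence
\[
1 \to Z_{\ol{\bk}}\big/\bigl(Z_{\ol{\bk}} \cap (G^x_{\ol{\bk}})^\circ\bigr) \to A(x_{\ol{\bk}}) \to A(\bar{x}_{\ol{\bk}}) \to 1,
\]
so $|A(x_{\ol{\bk}})|$ decomposes as $|A(\bar{x}_{\ol{\bk}})|$ times a central correction factor. Since $G_{\mathrm{ad}}$ is simple of adjoint type, the first factor is the same for $\ol{\bk} = \ol{\K}$ and $\ol{\bk} = \ol{\F}$ by McNinch--Sommers, so it remains to match the two correction factors.

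For this I would invoke the $\O$-split torus $S \subseteq G^x$ constructed in Proposition~\ref{prop:lift-torus}, whose geometric fibers $S_{\ol{\bk}}$ are maximal tori of the connected components $(G^x_{\ol{\bk}})^\circ$ by Remark~\ref{rmk:geo-fibers}. Any element of $Z_{\ol{\bk}} \cap (G^x_{\ol{\bk}})^\circ$ is a central semisimple element of the connected linear algebraic group $(G^x_{\ol{\bk}})^\circ$, so it lies in \emph{every} maximal torus; hence $Z_{\ol{\bk}} \cap (G^x_{\ol{\bk}})^\circ = (Z \cap S)_{\ol{\bk}}$. Proposition~\ref{prop:torus-intersect} identifies $Z \cap S$ as a diagonalizable $\O$-group scheme, which is finite of order dividing $|Z|$ and hence coprime to $p$. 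Lemma~\ref{lem:diag-constant} then shows $Z \cap S$ is constant over $\O$, so $(Z \cap S)_{\ol{\K}}$ and $(Z \cap S)_{\ol{\F}}$ have the same cardinality, and combining the three equalities yields $|A(x_{\ol{\K}})| = |A(x_{\ol{\F}})|$.

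The most delicate step is the very first one: verifying that the geometrically standard hypothesis really does pass to $G_{\mathrm{ad}}$, that $\bar{x}$ is balanced, and that the short exact sequence $1 \to Z \to G^x \to (G_{\mathrm{ad}})^{\bar{x}} \to 1$ is correctly formed at the level of $\O$-group schemes. All three hinge on the \'etaleness of $Z$ (so that $d\pi$ is an isomorphism on Lie algebras and $H^1_{\mathrm{\'et}}(\ol{\bk}, Z) = 0$), which in turn is what allows the identification $G^x = \pi^{-1}\bigl((G_{\mathrm{ad}})^{\bar{x}}\bigr)$; once this foundational setup is in place, the rest is straightforward assembly of results already established in earlier sections.
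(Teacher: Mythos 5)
Your argument is correct and follows essentially the same route as the paper's own proof: reduce to the adjoint quotient via the central isogeny, express the discrepancy $A(x_{\ol{\bk}}) \to A(\bar{x}_{\ol{\bk}})$ as $Z_{\ol{\bk}}/(Z_{\ol{\bk}}\cap (G^x_{\ol{\bk}})^\circ)$, identify this with the fiber of the $\O$-group scheme $Z\cap S$ (using the split maximal torus $S$ from Proposition~\ref{prop:lift-torus}), and then invoke Proposition~\ref{prop:torus-intersect} and Lemma~\ref{lem:diag-constant} to see $Z\cap S$ is diagonalizable and constant. The only cosmetic differences are that the paper obtains the isogeny of geometric-fiber centralizers by citing [LS, Lemma~2.33] rather than arguing directly from \'etaleness of $Z$, and it works only over the geometric fibers rather than setting up the short exact sequence at the $\O$-level.
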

\begin{proof}
Let $\bk \in \{\K,\F\}$ and denote $H = G^x$ for a balanced nilpotent section $x \in \fg_\O$ and $S \subseteq H$ as in Proposition~\ref{prop:lift-torus}, so that 
$S_{\ol{\bk}}$ is a maximal torus for $H^\circ_{\ol{\bk}}$ by Remark~\ref{rmk:geo-fibers}. 

Let $G' = G_{\text{ad}}$ and $H' = G_{\text{ad}}^x$, then the arguments in the proof of \cite[Lemma~2.33]{ls} can be used to show 
that there exists an isogeny
\[
1\rightarrow Z_{\ol{\bk}} \rightarrow H_{\ol{\bk}} \rightarrow H'_{\ol{\bk}} \rightarrow 1,
\]
obtained by restricting the ``covering space" isogeny
\[
1\rightarrow Z_{\ol{\bk}} \rightarrow G_{\ol{\bk}} \rightarrow G'_{\ol{\bk}} \rightarrow 1,
\]
down to $H_{\ol{\bk}}$. 
(In particular, $Z_{\ol{\bk}} \subseteq G_{\ol{\bk}}$ is the center of $G_{\ol{\bk}}$.)

%obtained by restricting the isogeny 
%\[
%1\rightarrow Z_{\ol{\bk}} \rightarrow G_{\ol{\bk}} \rightarrow G'_{\ol{\bk}} \rightarrow 1,
%\]
%down to $G_{\ol{\bk}}$. 
%(Our geometrically standard assumption implies that $Z_{\ol{\F}}$ is reduced and $|Z_{\ol{\K}}| = |Z_{\F}|$). 
Now since $H^\circ_{\ol{\bk}}$ gets 
mapped onto $H'^{\circ}_{\ol{\bk}}$, there exists a sequence
\begin{equation}\label{eqn:f-group-sequence}
1 \rightarrow \frac{Z_{\ol{\bk}}H^\circ_{\ol{\bk}}}{H^\circ_{\ol{\bk}}} \rightarrow A(x_{\ol{\bk}}) \rightarrow A'(x_{\ol{\bk}}) \rightarrow 1,
\end{equation}
where $\displaystyle A'(x_{\ol{\bk}}) = \frac{H'_{\ol{\bk}}}{H'^{\circ}_{\ol{\bk}}}$. We can also identify 
$\displaystyle \frac{Z_{\ol{\bk}}H^\circ_{\ol{\bk}}}{H^\circ_{\ol{\bk}}} \cong \frac{Z_{\ol{\bk}}}{Z_{\ol{\bk}}\cap H^\circ_{\ol{\bk}}}$. 
By Remark~\ref{rmk:bala-carter} and \cite[Theorem 36]{mcninch-sommers}, $|A'(x_{\ol{\K}})| = |A'(x_{\F})|$. Thus, by \eqref{eqn:f-group-sequence}, it suffices to show 
$|Z_{\ol{\K}}\cap H^\circ_{\ol{\K}}| = |Z_{\F}\cap H^\circ_{\F}|$. 

However, since $S_{\ol{\bk}}$ is a maximal torus, and $Z_{\ol{\bk}}\cap H^\circ_{\ol{\bk}}$ is central, then it follows that
\[
 Z_{\ol{\bk}}\cap H^\circ_{\ol{\bk}} \subseteq S_{\ol{\bk}}.
\]
This gives, $Z_{\ol{\bk}}\cap H^\circ_{\ol{\bk}} =  Z_{\ol{\bk}}\cap S_{\ol{\bk}}$. Observe that the intersection on the right hand side actually 
arises from an intersection of $\O$-group schemes since 
$Z_{\ol{\bk}}$ and $S_{\ol{\bk}}$ are the (geometric) fibers of the subgroup schemes $Z \subseteq G$ and $S \subseteq G$. In other words,
\[
(Z\cap S)_{\ol{\bk}} =  Z_{\ol{\bk}}\cap S_{\ol{\bk}}.
\]

%We can see that $Z\cap S \subseteq Z$ is a closed subgroup scheme of $Z$. 
%However, by the geometrically standard hypothesis, it follows that
%$Z$ is actually a constant $\O$-group scheme, associated to some \emph{finite} abelian group (finiteness follows from the fact that the $G_{\ol{\bk}}$ are
%quasi-simple).
By Proposition~\ref{prop:torus-intersect}, $Z\cap S$ is a diagonalizable subgroup scheme of $Z$. 
Now observe $Z\cong \Diag(\Lambda)$ with $\Lambda = \bX/\Z\Phi$. The geometrically standard assumption implies that 
$p \nmid |\bX/\Z\Phi|$. In particular, identifying $Z\cap S = \Diag(\Lambda')$ gives $p\nmid |\Lambda'|$ since $\Lambda'$ is a quotient of $\Lambda$. 
 Thus, Lemma~\ref{lem:diag-constant} now implies that $Z\cap S$ constant, and hence 
$|(Z\cap S)_{\ol{\K}}| = |(Z\cap S)_{\F}|$. Therefore, we are done. 
%The latter can actually be defined over $\O$ in the following way. 
%Let $\mathbf{A}$ be the finite group such that $\O[Z] = \O \mathbf{A}$, where $Z \hookrightarrow T$ is given by the surjection
%$\bX \twoheadrightarrow \mathbf{A}$. Since $Z$ is central, then this map factors through and induces a surjection
%$\bX^x \xrightarrow{\pi} \mathbf{A}$. Denoting $\bX^x \xrightarrow{\pi'} \bY$, let $\mathbf{C} = \pi(\ker \pi') \subset \mathbf{A}$. 
%Then 
%\[
%\O[Z \cap S] = \O(\mathbf{A}/\mathbf{C}). 
%\]
%In particular, $\ol{\bk}[Z_{\ol{\bk}} \cap S_{\ol{\bk}}] = \ol{\bk}(\mathbf{A}/\mathbf{C})$. The fact that $Z_{\ol{\bk}} \cap S_{\ol{\bk}}$ is reduced (and hence
%discrete) implies
%\[
%|Z_{\ol{\K}} \cap S_{\ol{\K}}| = |Z_{\ol{\F}} \cap S_{\ol{\F}}| = |\mathbf{A}/\mathbf{C}|.
%\]
\end{proof}

Now we consider the case of semisimple groups. 

\begin{lem}\label{lem:semisimple-comp-group}
Let $G$ be a semisimple group (with $G_\F$ geometrically standard), then $|A(x_{\ol{\K}})|=|A(x_{\F})|$. 
\end{lem}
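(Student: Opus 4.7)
The plan is to reduce to the simple case (Lemma~\ref{lem:simple-comp-group}) by the same mechanism used in that proof, namely passage to the adjoint quotient, and then to exploit the fact that for semisimple $G$ the adjoint group $G_{\text{ad}}$ splits as a direct product of simple adjoint factors. The strategy is to run the argument of Lemma~\ref{lem:simple-comp-group} to kill the center, and then replace the application of \cite[Theorem~36]{mcninch-sommers} with an inductive appeal to Lemma~\ref{lem:simple-comp-group} applied to each simple factor.

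First, I would repeat the construction from Lemma~\ref{lem:simple-comp-group} verbatim: set $H = G^x$, take $S \subseteq H$ as in Proposition~\ref{prop:lift-torus}, and set $H' = G^x_{\text{ad}}$. The covering isogeny argument from \cite[Lemma~2.33]{ls} still produces, for each $\bk \in \{\K,\F\}$, the short exact sequence
\[
1 \rightarrow \frac{Z_{\ol{\bk}}}{Z_{\ol{\bk}}\cap H^\circ_{\ol{\bk}}} \rightarrow A(x_{\ol{\bk}}) \rightarrow A'(x_{\ol{\bk}}) \rightarrow 1,
\]
where $Z = Z(G)$ and $A'(x_{\ol{\bk}}) = H'_{\ol{\bk}}/H'^{\circ}_{\ol{\bk}}$. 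Since $G$ is semisimple, $Z$ is still finite diagonalizable, $Z \cong \Diag(\bX/\Z\Phi)$, and the geometrically standard hypothesis on $G_\F$ still forces $p \nmid |\bX/\Z\Phi|$. Hence the argument at the end of Lemma~\ref{lem:simple-comp-group} carries over with no modification, via Proposition~\ref{prop:torus-intersect} and Lemma~\ref{lem:diag-constant}, to give
\[
|Z_{\ol{\K}} \cap H^\circ_{\ol{\K}}| = |Z_{\F} \cap H^\circ_{\F}|,
\]
and therefore the kernel terms in the two exact sequences (for $\bk = \K$ and $\bk = \F$) have equal cardinality.

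Next I would decompose $G_{\text{ad}} = G_1 \times \cdots \times G_r$ as a direct product of simple adjoint $\O$-groups. The section $x$ determines sections $x_i \in \Lie(G_i)_\O$ via the corresponding decomposition of Lie algebras, and the centralizer factors as $G^x_{\text{ad}} = \prod_{i=1}^r G_i^{x_i}$. One checks that each $x_i$ is a balanced nilpotent section for $G_i$: smoothness and dimension of centralizers both pass through direct products, so the balanced condition for $x$ forces the balanced condition for each $x_i$, and geometric standardness of $G_\F$ passes to each simple factor $G_{i,\F}$. Consequently
\[
A'(x_{\ol{\bk}}) \cong \prod_{i=1}^r A(x_{i,\ol{\bk}}),
\]
and an application of Lemma~\ref{lem:simple-comp-group} to each $G_i$ yields $|A(x_{i,\ol{\K}})| = |A(x_{i,\F})|$ for every $i$, hence $|A'(x_{\ol{\K}})| = |A'(x_{\F})|$. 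Combining this with the cardinality equality of the kernel terms obtained above and the exact sequence yields $|A(x_{\ol{\K}})| = |A(x_{\F})|$, as desired.

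The main obstacle I anticipate is purely bookkeeping: checking carefully that the decomposition $G_{\text{ad}} = \prod G_i$ and the induced decomposition of centralizers respect the integral $\O$-structure, so that the $x_i$ genuinely qualify as balanced nilpotent sections in the sense of Definition~\ref{defn:balanced-section} (in particular, that the geometrically standard hypothesis descends to each factor). All of this is routine once one unwinds the definitions, but it is where one must be most careful to avoid a silent appeal to a fact that holds only fiberwise.
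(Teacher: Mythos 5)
Your proposal is correct, and it takes a genuinely different route from the paper's, going in the opposite direction along the isogeny class. The paper lifts \emph{upward} to a covering group: it invokes \cite[II.1.6]{jantzen} to produce an isogeny $1 \to Z' \to \prod_i G_i \to G \to 1$ with simple factors $G_i$, first proves the direct-product case separately (so that $|A'(x_{\ol{\K}})| = |A'(x_\F)|$ for the cover), and then descends along the short exact sequence $1 \to Z'/(Z' \cap H'^\circ) \to A' \to A \to 1$; the kernel term is controlled via $Z' \cap S'$, where $S' \subseteq G'^{x'}$ is the lifted torus from Proposition~\ref{prop:lift-torus}. You instead pass \emph{downward} to the adjoint quotient $G \to G_{\text{ad}} = \prod_i G_i$, obtain the dual sequence $1 \to Z/(Z \cap H^\circ) \to A \to A' \to 1$ with $Z = Z(G)$, kill the kernel via $Z \cap S$ exactly as in Lemma~\ref{lem:simple-comp-group}, and handle $A'$ by factorizing the adjoint centralizer and invoking Lemma~\ref{lem:simple-comp-group} on each simple adjoint factor. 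Both routes rely on the same underlying ingredients (isogeny SES, Propositions~\ref{prop:lift-torus} and~\ref{prop:torus-intersect}, Lemma~\ref{lem:diag-constant}, the product factorization, and Lemma~\ref{lem:simple-comp-group}); yours is slightly more economical in that the adjoint quotient is canonical and automatically a product of adjoint simple groups, whereas the paper has to separately establish the product case and justify the existence and properties of the covering group, but the paper's version is what later generalizes to arbitrary reductive $G$ in Proposition~\ref{prop:comp-group}. One detail worth making explicit in your write-up: you need $y = d\pi(x)$, not $x$ itself, as the nilpotent section for $G_{\text{ad}}$, and balancedness of $y$ follows because $p \nmid |Z(G)|$ (geometric standardness) makes $\pi$ \'etale on each fiber, so smoothness of centralizers transfers, and $d\pi$ is an isomorphism on Lie algebras integrally since $\Lie(Z)_\O = 0$.
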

\begin{proof}
In this proof $\bk$ will denote either $\K$ or $\F$. 

First suppose that 
\[
G = G_1 \times \cdots \times G_r,
\]
where the $G_i$ are simple (and hence geometrically standard over $\F$ since $G$ is geometrically standard of $\F$ by \cite[\S 3.1~(S2)]{mcninch2016}). 
Now note that 
since the factors of $G$ act independently
 on the corresponding factors of $\fg_\O$, then
for \emph{any} nilpotent section 
\[
x = (x_1,\dots,x_r) \in (\fg_1)_\O \times \cdots \times (\fg_r)_\O,
\]
the centralizer $G^x$ satisfies
\[
G^x = G_1^{x_1}\times \cdots \times G_r^{x_r},
\]
where $G_i^{x_r}\subseteq G_i$ is the centralizer of $x_i$.
 To simplify notation, set $H_i = G_i^x$ for all $i$, and set $H = G^x$. 
Thus,
 \[
 H_{\bk} = (H_1)_{\bk} \times \cdots \times (H_r)_{\bk},
 \]
 and hence, $H_{\bk}$ is smooth if and only if $(H_i)_{\bk}$ is smooth for all $i$. This implies that 
 $x \in \fg_\O$ is a \emph{balanced} for $G$ if and only if for all $i$,  $x_i$ is balanced for $G_i$.
Thus, if we assume that $x$ is balanced, then from the identity
$H_{\ol{\bk}}^\circ = (H_1)^\circ_{\ol{\bk}} \times \cdots \times (H_r)^\circ_{\ol{\bk}}$,
it follows that
\[
A(x_{\ol{\bk}}) =  A((x_1)_{\ol{\bk}}) \times \cdots \times A((x_r)_{\ol{\bk}}).
\]
Therefore, $|A(x_{\ol{\K}})| = |A(x_{\F})|$ by Lemma~\ref{lem:simple-comp-group}.

%A balanced nilpotent section for 
%$G$ is given by 
%\[
%x = (x_1,\dots,x_r) \in (\fg_1)_\O \times \cdots \times (\fg_r)_\O,
%\]
%where each $x_i$ is balanced for $G_i$. The centralizer of $x$ is given by 
%\[
%H = H_1 \times \cdots \times H_r \subseteq G,
%\]
%where each $H_i \subset G_i$ is the centralizer of $x_i$. 

More generally, if $G$ is an arbitrary semisimple group, then there exists an isogeny 
\begin{equation}\label{eqn:semi-simple-isogeny}
 1 \rightarrow Z' \rightarrow \prod_{i=1}^r G_i \xrightarrow{\pi} G \rightarrow 1,
\end{equation}
for some central, diagonalizable subgroup scheme $Z'$ (see \cite[II.1.6]{jantzen}).
Since $G'_\bk$ is geometrically standard, then $Z'_{\bk}$ is both smooth by \cite[Definition 2.11 and Theorem 5.2]{her:scrg}, and finite since $G'_{\bk}$ is a finite product 
of quasi-simple group schemes. In particular, $Z'$ is constant. 

% (The geometrically standard hypothesis implies 
%that $Z'$ is reduced, and hence smooth.)
Let $G' = \prod_{i=1}^r G_i$, and let 
$x' = (x'_1,\dots, x'_r) \in \fg'_\O$ be a balanced nilpotent section,
%satisfying the conditions of Lemma~\ref{lem:torus-intersect 
and let $x = d\pi(x') \in \fg_\O$. 
Let $H'$ denote the centralizer of $x'$, and let $H$ denote the centralizer of $x$. By the same argument as in \cite[Lemma~2.33]{ls}, 
there exists an isogeny
\begin{equation}\label{eqn:semi-simple-intersect}
1\rightarrow Z'_{\ol{\bk}} \rightarrow H'_{\ol{\bk}} \rightarrow H_{\ol{\bk}} \rightarrow 1,
\end{equation}
induced by base changing \eqref{eqn:semi-simple-isogeny} to $\ol{\bk}$, and restricting down to the centralizer $H'_{\ol{\bk}}$. 
Now since $Z'_{\ol{\bk}}$ is smooth, and $H'_{\ol{\bk}}$ is smooth since $x'$ is balanced, then 
$H_{\ol{\bk}}$ must also be smooth. It follows that the section $x \in \fg_\O$ is also balanced. 

 Observe now that the isogeny sends $H'^\circ_{\ol{\bk}}$ onto $H^\circ_{\ol{\bk}}$, and thus induces a sequence 
\begin{equation*}
1 \rightarrow \frac{Z'_{\ol{\bk}}H'^\circ_{\ol{\bk}}}{H'^\circ_{\ol{\bk}}} \rightarrow A'(x_{\ol{\bk}}) \rightarrow A(x_{\ol{\bk}}) \rightarrow 1,
\end{equation*}
where we identify 
$\displaystyle \frac{Z'_{\ol{\bk}}H'^\circ_{\ol{\bk}}}{H'^\circ_{\ol{\bk}}} \cong \frac{Z'_{\ol{\bk}}}{Z'_{\ol{\bk}}\cap H'^\circ_{\ol{\bk}}}$. 

Without loss of generality, we can assume that  $x' \in \fg'_\O$ also satisfies the conditions of Lemma~\ref{lem:torus-intersect}.
Applying Proposition~\ref{prop:lift-torus} to $x'$ and $G'$ now provides a split, maximal torus $S' \subseteq H'$. 
As in the proof of Lemma~\ref{lem:simple-comp-group}, the proof of this lemma will follow by showing that 
$Z'\cap S'$ is both diagonalizable and constant. 
The former property holds by Proposition~\ref{prop:torus-intersect}, and the latter propery can be deduced from Lemma~\ref{lem:diag-constant} since 
$Z'$ is constant and diagonalizable, and hence any diagonalizable subgroup scheme of $Z'$ must also be constant. 
% Again, this follows by observing 
%$Z'$ is a group scheme which is both constant by \cite[Definition 2.11 and Theorem 5.2]{her:scrg}, and finite (since $G'_{\ol{\bk}}$ is a finite product of quasi-simple
%group schemes).
%However, this can be deduced by applying  Lemma~\ref{lem:diag-constant} to $Z'\cap S \subseteq Z'$ since $Z'$ is diagonalizable and constant. 
%The result now follows by applying Lemma~\ref{lem:diag-constant} to $Z'\cap S \subseteq Z'$. 
\end{proof}

The preceding argument can also be extended to arbitrary reductive groups $G$ with $G_\F$ geometrically standard. 
\begin{prop}\label{prop:comp-group}
Let $G$ be a reductive group (with $G_\F$ geometrically standard), then $|A(x_{\ol{\K}})|=|A(x_{\F})|$. 
\end{prop}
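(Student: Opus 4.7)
The plan is to reduce the reductive case to the semisimple case handled in Lemma~\ref{lem:semisimple-comp-group}, exactly paralleling how Lemma~\ref{lem:semisimple-comp-group} reduced the product-of-simples case to simple groups via a central isogeny. The starting point is the standard central isogeny
\[
1 \to Z_0 \to Z(G)^\circ \times G_{\text{der}} \xrightarrow{\pi} G \to 1,
\]
where $G_{\text{der}}$ is the (semisimple) derived subgroup and $Z_0$ is a finite diagonalizable central subgroup scheme over $\O$. Since $G_\F$ is geometrically standard, $G_{\text{der},\F}$ is too, and moreover $p \nmid |Z_0(\ol{\F})|$, so by Lemma~\ref{lem:diag-constant} the group scheme $Z_0$ is constant over $\O$.

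Next, I would lift $x$ to the product. Any nilpotent section is killed by the projection to the toral factor, so $x = d\pi(x')$ for a unique $x' = (0, x_{\text{der}}) \in \text{Lie}(Z(G)^\circ \times G_{\text{der}})$, and the centralizer in the product is
\[
(Z(G)^\circ \times G_{\text{der}})^{x'} = Z(G)^\circ \times G_{\text{der}}^{x_{\text{der}}}.
\]
I would then argue, by repeating the argument from Lemma~\ref{lem:semisimple-comp-group}, that $x'$ (and hence $x_{\text{der}}$) is balanced for $G_{\text{der}}$: the smoothness and fiber-dimension conditions for the product centralizer transfer to $G^x$ via the isogeny, while the toral factor $Z(G)^\circ$ only contributes a connected piece of fixed dimension to both fibers.

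Third, I would restrict the isogeny to centralizers (as in \cite[Lemma~2.33]{ls}) and take geometric fibers to get
\[
1 \to Z_{0,\ol{\bk}} \to Z(G)^\circ_{\ol{\bk}} \times G_{\text{der}, \ol{\bk}}^{x_{\text{der}}} \to G^x_{\ol{\bk}} \to 1,
\]
which induces the exact sequence of component groups
\[
1 \to \frac{Z_{0,\ol{\bk}}}{Z_{0,\ol{\bk}} \cap (Z(G)^\circ_{\ol{\bk}} \times (G_{\text{der}, \ol{\bk}}^{x_{\text{der}}})^\circ)} \to A(x'_{\ol{\bk}}) \to A(x_{\ol{\bk}}) \to 1.
\]
Since $Z(G)^\circ$ is connected, $A(x'_{\ol{\bk}}) \cong A(x_{\text{der}, \ol{\bk}})$, whose cardinality is independent of $\bk \in \{\K, \F\}$ by Lemma~\ref{lem:semisimple-comp-group}.

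The remaining step, which I expect to be the main obstacle, is to show that the left-hand kernel has the same cardinality over $\ol{\K}$ and over $\F$. The strategy mimics the end of Lemma~\ref{lem:semisimple-comp-group}: choose a split maximal torus $S$ of $Z(G)^\circ \times G_{\text{der}}^{x_{\text{der}}}$ using Proposition~\ref{prop:lift-torus}, observe that $Z_0 \cap (Z(G)^\circ \times (G_{\text{der}}^{x_{\text{der}}})^\circ) = Z_0 \cap S$ on geometric fibers (the intersection is central so lies in any maximal torus of the identity component), and then apply Proposition~\ref{prop:torus-intersect} to conclude that $Z_0 \cap S$ is diagonalizable as an $\O$-group scheme. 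Since $Z_0$ is constant and has order prime to $p$, any diagonalizable subgroup scheme of $Z_0$ is again constant by Lemma~\ref{lem:diag-constant}, so its geometric-fiber cardinalities agree. The delicate point here — and the reason this is the main obstacle — is ensuring that the set-theoretic intersection inside each geometric fiber genuinely comes from the scheme-theoretic intersection over $\O$, which is precisely what Proposition~\ref{prop:torus-intersect} is designed to supply.
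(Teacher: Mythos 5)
Your proposal is correct and follows essentially the same route as the paper's proof: the paper also reduces via the central isogeny $1 \to T_1 \cap T_2 \to [G,G] \times T_2 \to G \to 1$ from \cite[II.1.18]{jantzen} (with $T_2 = Z(G)^\circ$), lifts $x$ to a balanced nilpotent section of the product, invokes Lemma~\ref{lem:semisimple-comp-group} for the semisimple factor, and controls the kernel of the induced map of component groups exactly as you describe, via Propositions~\ref{prop:lift-torus} and \ref{prop:torus-intersect} and Lemma~\ref{lem:diag-constant}. The only differences are notational (you write $Z(G)^\circ$ explicitly where the paper keeps Jantzen's $T_1,T_2$, and you spell out that the toral factor contributes nothing to the component group because it is connected, which the paper leaves implicit).
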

\begin{proof}
In this proof $\bk$ will denote either $\K$ or $\F$. 

Let $G' = [G,G]$, then by \cite[II.1.18]{jantzen} there exists an isogeny
\begin{equation}\label{eqn:reductive-isogeny}
1 \rightarrow T_1 \cap T_2 \rightarrow G' \times T_2 \xrightarrow{\pi} G \rightarrow 1,
\end{equation}
where $T_1$ and $T_2$ are tori and $T_1\cap T_2 \subseteq Z(G'\times T_2)$ is central, diagonalizable and finite. 
%For $\bk \in \{\K, \F\}$, $T_2 \cong Z_{\ol{\bk}}^\circ$ and 
%$T_1 \cap T_2 \cong G'_{\ol{\bk}} \cap Z_{\ol{\bk}}^\circ$, where $Z \subseteq G$ is the center of $G$. 
%Let $Z'_{\ol{\bk}} = G'_{\ol{\bk}} \cap Z_{\ol{\bk}}^\circ$, so we have 
%\[
%1 \rightarrow Z'_{\ol{\bk}} \rightarrow G'_{\ol{\bk}} \times Z_{\ol{\bk}}^\circ \xrightarrow{\pi} G_{\ol{\bk}} \rightarrow 1. 
%\]
%The geometrically standard hypothesis implies that all of these groups are reduced. 
%To simplify notation, we will set $Z' = T_1\cap T_2$.
%% so that for $\bk \in \{\K,\F\}$,
%%\[
%%1 \rightarrow Z'_{\ol{\bk}} \rightarrow G'_{\ol{\bk}} \times (T_2)_{\ol{\bk}} \xrightarrow{\pi} G_{\ol{\bk}} \rightarrow 1. 
%%\]
%The geometrically standard hypothesis, and \cite[Definition 2.11 and Theorem 5.2]{her:scrg}, implies that $Z'$ is diagonalizable and constant. 
Let $Z' = T_1 \cap T_2$. The arguments appearing immediately below \eqref{eqn:semi-simple-isogeny}
 in the proof of Lemma~\ref{lem:semisimple-comp-group}, also imply that $Z'$ is diagonalizable and constant. 

Let $\mathfrak{h}_\O$ denote the Lie algebra of $T_2$, and note that there must exist a balanced 
nilpotent section $x \in \fg_\O$ which satisfies the conditions of 
Lemma~\ref{lem:torus-intersect}, and is of the form $x = d\pi(x')$ for some balanced nilpotent section 
 $x' \in \fg'_\O \times \mathfrak{h}_\O$, which also satisfies the conditions of Lemma~\ref{lem:torus-intersect}. 
% and also satisfies  $x = d\pi(x')$. 
 (This again follows from the arguments appearing immediately below
  \eqref{eqn:semi-simple-intersect}.)
  % in the proof of Lemma~\ref{lem:semisimple-comp-group}.)
%(Actually, we can naturally identify the nilpotent elements of 
%$\fg'_\O \times \mathfrak{h}_\O$ with those in $\fg_\O$.) 

Let $H' \subseteq G' \times T_2$ be the centralizer of $x'$, and let $H \subseteq G$ be the centralizer of $x$. Again, 
as in \cite[Lemma~2.33]{ls}, there is an isogeny 
\[
1 \rightarrow Z'_{\ol{\bk}} \rightarrow H'_{\ol{\bk}} \rightarrow H_{\ol{\bk}} \rightarrow 1,
\]
%where $Z'_{\bk}$ is the fiber of $Z'$ from above. 
obtained from \eqref{eqn:reductive-isogeny} by base-changing to $\ol{\bk}$, and restricting down to $H'_{\ol{\bk}}$. 
Now just as in the proof of Lemma~\ref{lem:semisimple-comp-group}, it suffices to show that the finite groups
\[
 \frac{Z'_{\ol{\K}}{H'^\circ_{\ol{\K}}}}{H'^\circ_{\ol{\K}}} \cong \frac{Z'_{\ol{\K}}}{Z'_{\ol{\K}}\cap H'^\circ_{\ol{\K}}},
 \quad 
  \frac{{Z'_{\F}}{H'^\circ_{\F}}}{H'^\circ_{\F}} \cong \frac{Z'_{\F}}{Z'_{\F}\cap H'^\circ_{\F}}
\] 
have
the same order. However, this follows from the argument given in the last paragraph
of the proof of Lemma~\ref{lem:semisimple-comp-group}. 
%And as before, this holds by noting that $Z'$ is constant, and 
%by Proposition~\ref{prop:torus-intersect} and Lemma~\ref{lem:diag-constant}, 
% $Z' \cap S \subseteq Z'$ is also constant. 
%%$Z'\cap S \subseteq Z'$ is diagonalizable and constant 
%%Proposition~\ref{prop:torus-intersect} and Lemma~\ref{lem:diag-constant} 
%%imply $Z' \cap S$ is
%%diagonalizable and constant as an $\O$-group scheme
%% (recall that $Z'$ if diagonalizable and finite by construction).  
\end{proof}
\begin{rmk}
In Theorem~\ref{thm:integral-comp-group}, it will be proven that the component groups are actually isomorphic (not just of the same order). 
The isomorphism of the component groups for general reductive $G$ with geometrically standard fiber $G_\F$ was originally claimed in \cite[Theorem B]{mcninch2007}, however an error was later found in the proof.
\end{rmk}

 \subsection{Proof of Theorem~\ref{thm:main-thm}}\label{subsec:disconnected-case}
%Since  $\K \otimes \O[G^x_{\text{tf}}] \cong \K[G^x_{\K}]$, and from the previous subsection, 
% \[
% \F \otimes \O[G^x_{\text{tf}}] \supseteq \F[(G^x_{\F})^{\circ}].
% \]
In this subsection, we will establish the smoothness of $G^x$. 
The key step will be to show that  $G^x_{\text{tf},\F}$ contains all the connected components of $G^x_{\F}$. 
(We are still maintaining our assumption that $\F = \ol{\F}$.)
%Since $\F[G^x_{\F}]$ is a reduced group scheme, 

Label the connected components of $G^x_\F$ by $(G^x_\F)^i$ for $i=0,\dots, m-1$, where $m = |A(x_{\F})|$ and
$(G^x_\F)^0 := (G^x_\F)^\circ$ is the identity component subgroup scheme. 
Then the decomposition of $G^x_{\F}$ into its connected components induces the decomposition 
 \[
 \F[G^x_{\F}] = \F[(G^x_{\F})^{\circ}] \times \F[(G^x_{\F})^{1}] \times \cdots \times \F[(G^x_{\F})^{m-1}],
 \]
where $\F[(G^x_{\F})^{\circ}] \cong  \F[(G^x_{\F})^{i}]$ for all $i \geq 1$ (as algebras).
The fact that $G^x_{\text{tf},\F}$ is smooth implies
\[
 \F \otimes \O[G^x_{\text{tf}}] = \F[(G^x_{\F})^{\circ}] \times \F[(G^x_{\F})^{1}] \times \cdots \times \F[(G^x_{\F})^{k-1}],
\] 
where $1 \leq k \leq m$. In particular, 
$\F \otimes \O[G^x_{\text{tf}}] \subseteq \F[G^x_{\F}]$ and $G^x_{\text{tf},\F}$ has precisely $k$ connected components. 
Our goal is to show $k=m$. 

If necessary, let us enlarge $\O$, and consequently $\K$, by a finite extension\footnote{The integral closure of a (complete) DVR in a finite algebraic extension is a
 finitely-generated (complete) DVR (cf. \cite[Chap.~1, Proposition~8, and Chap.~2, Proposition~3]{ser:lf}).}
  (the assumption $\ol{\F} = \F$ implies that $\F$ remains unchanged), so that by Proposition~\ref{prop:comp-group}
\begin{equation}\label{eqn:K-rational-idempotents}
 \K[G^x_{\K}] = \K[(G^x_{\K})^{\circ}] \times \K[(G^x_{\K})^{1}] \times \cdots \times \K[(G^x_{\K})^{m-1}].
\end{equation}
This decomposition is given by a set of orthogonal idempotents 
$\epsilon_0,\epsilon_1,\dots, \epsilon_{m-1} \in \K[G^x_{\K}]$ with 
$\epsilon_i^2 = \epsilon_i$ for $i \geq 0$, $\epsilon_i\epsilon_j = 0$ for $i\neq j$ and 
${\epsilon_0 + \cdots + \epsilon_{m-1} = 1}$. Now
\[
\epsilon_i = \frac{h_i}{\omega^{n_{i}}}
\]
where $h_i \in \O[G^x_{\text{tf}}]$ and $n_i \geq 0$ for all $i$. Assume that each $n_i$ is chosen minimally 
so that $h_i \not\in \omega\O[G^x_{\text{tf}}]$. Thus,
\begin{equation}\label{eqn:square-h}
  \epsilon_i^2 = \epsilon_i \implies h_i^2 = \omega^{n_i}h_i.
\end{equation}
 On the other hand, $h_i \not\in \omega\O[G^x_{\text{tf}}]$ implies that 
 $\overline{h_i} \neq 0 \in \O[G^x_{\text{tf}}]/\omega\O[G^x_{\text{tf}}] \cong \F[G^x_{\text{tf}, \F}]$. But by \eqref{eqn:square-h}, 
 $\overline{h_i}^2 = 0$ if $n_i > 0$ (which contradicts the fact that $\F[G^x_{\text{tf}, \F}]$ has no nonzero nilpotent elements since it is reduced), so 
 $n_i =0$ for all $i$, and therefore, $\epsilon_i \in \O[G^x_{\text{tf}}]$ for all $i$.  
 
 This gives an internal decomposition 
 \[
 \O[G^x_{\text{tf}}] = A_0 \times A_1 \times \cdots A_{m-1},
 \]
 where $A_i = \epsilon_i\O[G^x_{\text{tf}}]$. Tensoring with $\F$ gives an internal algebra decomposition
 \[
  \F \otimes \O[G^x_{\text{tf}}] = \F\otimes A_0 \times \F\otimes A_1 \times \cdots \times \F\otimes A_{m-1},
 \] 
 since for all $i$,  $\overline{\epsilon_i} \neq 0$ and $\overline{\epsilon_i}^2 = \overline{\epsilon_i}$, also
 $\overline{\epsilon_i}\overline{\epsilon_j} = 0$ for $i \neq j$ and 
 ${\overline{\epsilon_0} + \cdots + \overline{\epsilon_{m-1}} = 1}$. Thus, $G^x_{\text{tf},\F}$ has at least $m$ 
 connected components, and hence $k=m$.  In particular, $G^x_{\text{tf}, \F} = G^x_{\F}$, and hence, $G^x_{\text{tf}} = G^x$ so 
 $G^x$ is smooth in this case. 
 %(We have also shown that the indicator functions for the connected components of $G^x_{\ol{\F}}$ can all be defined over $\F[G^x_\F]$.) 
\begin{rmk}\label{rmk:large-DVR}
We have just shown that Theorem~\ref{thm:main-thm} will hold if $\O$ is enlarged by a DVR $\O \subset \O'$ where the residue field of $\O'$ is algebraically 
closed and the fraction field of $\O'$ is large enough so that
\eqref{eqn:K-rational-idempotents} holds. 
\end{rmk}

\begin{proof}[Proof of Theorem~\ref{thm:main-thm}]
For the proof, we return to the setup from \S\ref{sec:prelim}, where the only condition on $\O$ is that it is complete and $\F = \O/\omega$.
%is perfect and has
%good characteristic. 

Let $\ol{\O}$ denote the completion of the maximal unramified extension of $\O$. By definition, $\omega$ is also the uniformizer for 
 $\ol{\O}$ and $\ol{\O}/ \omega\ol{\O} = \ol{\F}$. It also possible to enlarge $\ol{\O}$ by a \emph{finite} purely ramified integral extension 
 $\O' \supseteq \ol{\O}$  (cf. \cite[\S 1.4]{ser:lf}), such that \eqref{eqn:K-rational-idempotents} holds over the fraction field $\K'$ of $\O'$, where we note $\O'$ must 
 have the same residue field as $\ol{\O}$. 
 By Remark~\ref{rmk:large-DVR},  the base change $G^x_{\O'}$ is smooth, and in particular,
 \[
 \O'[G^x_{\O'}] = \O'\otimes \O[G^x]
 \]
 is torsion-free. Observe that $\O'$ is torsion-free as an $\O$-module, since $\O$ and $\O'$ are integral domains and $\O \subseteq \O'$. The fact that 
 $\O$ is a DVR now implies $\O'$ is also a flat $\O$-module (since these properties are equivalent for discrete valuation rings). Thus, 
 the functor $\O' \otimes_\O -$ is exact, and the natural map
 \begin{equation}\label{eqn:induced-base-change}
 \O[G^x] \xrightarrow{1\otimes \id} \O'\otimes \O[G^x]
 \end{equation}
 is injective. To see why \eqref{eqn:induced-base-change} is injective, first let  $0 \neq f \in \O[G^x]$, be arbitrary.  There is a commutative diagram 
 \[
\begin{CD}
     \O f    @>>>   \O[G^x]   \\
         @VVV           @VV 1\otimes \id V                                           \\ 
     \O' \otimes \O f    @>>>    \O'\otimes \O[G^x], 
\end{CD}
\]
where the bottom map is injective by exactness of $\O'\otimes_{\O}-$. 
%To show \eqref{eqn:induced-base-change} is injective, 
%Now let $0 \neq f \in \O[G^x]$ be arbitrary. 
 %By the injectivity of the bottom map, this implies $\O' \otimes \O f \hookrightarrow \O'\otimes \O[G^x]$. 
 
 Now suppose that $f$ is torsion, then since $\O f$ is cyclic and non-zero, we must have  
$\O f \cong \O/\omega^k$ for some $k \geq 1$. If $\omega'$ is the uniformizer of 
$\O'$, then $(\omega')^r = \omega$ for some $r \geq 1$. Thus
\[
\O'\otimes \O f \cong \O'/\omega^r \cong \O'/(\omega')^{rk},
\]
where $rk \geq 1$, and so must be non-zero. The injectivity of the bottom map in the preceding diagram implies $\O' (1\otimes f) \cong \O'/(\omega')^{rk}$,
therefore $1 \otimes f$ must also be non-zero, and torsion. But this is not possible since $\O'\otimes \O[G^x]$ is torsion-free.

This allows us to identify identify $\O[G^x] \subseteq \O'\otimes \O[G^x]$, which implies that $\O[G^x]$ must be torsion-free, since $\O'\otimes \O[G^x]$ is 
 torsion-free and any non-zero $\O$-submodule of a torsion-free module is torsion-free. 
% Let $\omega'$ be the uniformizer for $\O'$,
% and observe that there exists a unique minimal integer $k \geq 1$ such that $(\omega')^k$. 
%  
%The arguments in \S\ref{subsec:disconnected-case} show that there exists some DVR 
\end{proof}

\subsection{The component group scheme}\label{ssec:comp-grp-scheme}
We will continue to maintain our assumption that $\F = \ol{\F}$. 

By \cite[Definition~13.12]{mil:ALA}, for \emph{any} field $\bk$ over $\O$, there exists a \emph{component group scheme}, denoted 
$A_{\bk}(x)$. This is defined to be the spectrum of the largest \'etale subalgebra of $\bk[G^x_{\bk}]$ (cf. \cite[13b]{mil:ALA}).  Moreover, the coordinate algebra $\bk[A_{\bk}(x)]$ 
is naturally a Hopf subalgebra of $\bk[G^x_{\bk}]$. 
The inclusion $\bk[A_{\bk}(x)] \hookrightarrow \bk[G^x_{\bk}]$, induces a surjective group scheme homomorphism $G^x \rightarrow A_{\bk}(x)$. 
The \emph{connected component}
 of $G^x_\bk$, denoted $(G^x_\bk)^\circ$, can now be defined as the normal subgroup scheme given by the kernel of this morphism. 
 
It follows from \cite[Proposition~13.18]{mil:ALA}, that for any field extension $\bk' \supseteq \bk$, 
\[
A_{\bk'}(x) \cong A_{\bk}(x)_{\bk'}, \quad \text{and} \quad (G^x_{\bk'})^\circ \cong (G^x_{\bk})^\circ_{\bk'}.
\]
In particular, $(A_{\K}(x)_{\ol{\K}})(\ol{\K}) \cong A(x_{\ol{\K}})$ and $(A_{\F}(x))(\F) \cong A(x_\F)$, 
where the discrete groups on the right-hand-side were considered in \S\ref{subsec:comp-fibers}.

In \S\ref{subsec:disconnected-case}, it was shown that if $x \in \fg_\O$ is a balanced nilpotent section, and $\K$ satisfies \eqref{eqn:K-rational-idempotents}, then 
there exist orthogonal idempotents $\epsilon_0,\dots,\epsilon_{m-1} \in \O[G^x]$ 
so that
\begin{equation}\label{eqn:O-idempotent}
\O[G^x] = \epsilon_0\O[(G^x)] \times \epsilon_1\O[(G^x)] \times \cdots \times \epsilon_{m-1}\O[(G^x)],
\end{equation}
where $m = |A(x)_{\ol{\K}}| = |A(x)_{\F}|$. 

Therefore, the $\O$-scheme $G^x$ has a decomposition
\begin{equation}\label{eqn:component-decomp}
G^x= (G^x)^\circ \sqcup (G^x)^1 \sqcup \cdots \sqcup (G^x)^{m-1},
\end{equation}
where $(G^x)^i = \Spec(\epsilon_i\O[G^x])$ and $(G^x)^i_{\ol{\K}}$ and $(G^x)^i_\F$ give the complete set of connected components for $G^x_{\ol{\K}}$ and 
$G^x_{\F}$ respectively. 

%The following theorem will show that $(G^x)^\circ$ that the component group schemes $A_\K(x)$ and $A_\F(x)$ both naturally arise from a constant group scheme $A(x)$, which
%is a quotient of $G^x$. 
\begin{thm}\label{thm:integral-comp-group}
Let $x \in \fg_\O$ be a balanced nilpotent section, and suppose that $\O$ is such that $\F = \ol{\F}$ and $\K$ satisfies \eqref{eqn:K-rational-idempotents}.
% then 
%there exist orthogonal idempotents $\epsilon_0,\dots,\epsilon_m \in \O[G^x]$ 
%%such that 
%%\[
%%\ol{\K}\otimes \epsilon_i\O[G^x], \quad \text{and} \quad  \F \otimes \epsilon_i\O[G^x]
%%\]
%%Equivalently, 
%giving $G^x$ (regarded as an $\O$-scheme), the decomposition
%\begin{equation}\label{eqn:component-decomp}
%G^x= (G^x)^\circ \sqcup (G^x)^1 \sqcup \cdots \sqcup (G^x)^m,
%\end{equation}
%where $(G^x)^i = \Spec(\epsilon_i\O[G^x])$ and $(G^x)^i_{\ol{\K}}$ and $(G^x)^i_\F$ give the complete set of connected components for $G^x_{\ol{\K}}$ and 
%$G^x_{\F}$ respectively. 
If we let $A(x)$ be the constant scheme defined by the subalgebra 
\begin{equation}\label{eqn:comp-coord-algebra}
\O[A(x)] := \sum_{i=0}^{m-1} \O\epsilon_i \subseteq \O[G^x],
\end{equation}
where $\epsilon_0,\dots,\epsilon_{m-1} \in \O[G^x]$ are as in \eqref{eqn:O-idempotent}, then 
\begin{enumerate}
\item for any field $\bk$ over $\O$, $A(x)_{\bk} \cong A_{\bk}(x)$, 
\item $\O[A(x)]$ is a Hopf subalgebra of $\O[G^x]$,
\item  $(G^x)^\circ$ is the kernel of the induced homomorphism $G^x \twoheadrightarrow A(x)$, so that $A(x) \cong G^x/(G^x)^\circ$ and 
$(G^x)^\circ \unlhd G^x$ is a normal subgroup scheme. 
\end{enumerate}

%Furthermore, $(G^x)^\circ \unlhd G^x$ is a normal subgroup scheme, and the quotient, 
%\[
%A(x) := G^x/(G^x)^\circ,
%\] 
%is a constant finite group scheme, and satisfies 
%\begin{equation}\label{eqn:comp-group-base-change}
%A(x) \times_{\Spec{\O}} \Spec{\ol{\K}} \cong A(x_{\ol{\K}}),\quad  \text{and} \quad A(x) \times_{\Spec{\O}} \Spec{\F} \cong A(x_{\F}). 
%\end{equation}
%In particular, $A(x_{\ol{\K}}) \cong A(x_{\F})$ as groups. 
\end{thm}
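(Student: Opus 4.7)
The plan is to exploit the decomposition~\eqref{eqn:O-idempotent}: since $\O[A(x)] = \sum_{i=0}^{m-1} \O\epsilon_i \cong \O^{\times m}$, the scheme $A(x)$ is a constant $\O$-scheme with $m$ points, and we verify the three claims in turn, using the flatness of $\O[G^x]$ established in Theorem~\ref{thm:main-thm} to transfer Hopf-algebraic structure from the generic fiber down to $\O$. For~(1), it suffices to check the claim for $\bk = \K$ and $\bk = \F$, since both $A_{\bk}(x)$ and $A(x)_{\bk}$ commute with field extensions of $\bk$ (for the former see \cite[Proposition~13.18]{mil:ALA}). In either case, the algebra $\bk[A(x)_{\bk}] \cong \bk^{\times m}$ is \'etale and is contained in the largest \'etale subalgebra $\bk[A_{\bk}(x)]$; equality reduces to matching $\bk$-dimensions, as the rank of $A_{\bk}(x)$ equals the number of geometric connected components of $G^x_{\bk}$, which is $m$ over $\K$ by~\eqref{eqn:K-rational-idempotents} and over $\F$ by the analysis in~\S\ref{subsec:disconnected-case}.

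For~(2), I first invoke that, over $\K$, the \'etale subalgebra $\K[A(x)_\K] \cong \K[A_\K(x)]$ is automatically a Hopf subalgebra of $\K[G^x_\K]$, since $A_\K(x)$ is a quotient group scheme of $G^x_\K$. Labeling $A(x_\K) = \{g_0,\dots,g_{m-1}\}$ with $g_0$ the identity, the comultiplication on this \'etale group scheme is dual to the group law on the finite group $A(x_\K)$, so over $\K$,
\[
\Delta(\epsilon_i) = \sum_{g_j g_k = g_i} \epsilon_j \otimes \epsilon_k,
\]
a sum with $\{0,1\}$-valued coefficients that already defines an element of $\O[A(x)] \otimes_\O \O[A(x)]$. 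The flatness of $\O[G^x]$ makes the natural map $\O[G^x] \otimes_\O \O[G^x] \hookrightarrow \K[G^x_\K] \otimes_\K \K[G^x_\K]$ injective, so the displayed identity must already hold integrally, giving $\Delta(\O[A(x)]) \subseteq \O[A(x)] \otimes_\O \O[A(x)]$. The same flat-base-change argument, applied to the antipode (which by the group law satisfies $\sigma(\epsilon_i) = \epsilon_{i^{-1}}$) and the counit (satisfying $\varepsilon(\epsilon_i) = \delta_{i,0}$ because the identity lies in $(G^x)^\circ$), completes~(2).

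For~(3), the augmentation ideal of $\O[A(x)]$ is $I = \sum_{i \geq 1} \O\epsilon_i$, and the kernel of the induced surjection $G^x \twoheadrightarrow A(x)$ is the closed subscheme defined by $I \cdot \O[G^x] = \bigoplus_{i \geq 1} \epsilon_i \O[G^x]$, whose quotient is the complementary summand $\epsilon_0 \O[G^x]$---precisely the coordinate ring of $(G^x)^\circ$ by~\eqref{eqn:component-decomp}. Normality of $(G^x)^\circ \unlhd G^x$ and the isomorphism $A(x) \cong G^x/(G^x)^\circ$ then follow from the fact that kernels of group scheme homomorphisms are automatically normal and realize the target as the corresponding quotient. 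The main obstacle is part~(2): once the integral decomposition~\eqref{eqn:O-idempotent} and the flatness of $\O[G^x]$ are in hand, the argument reduces to the fortunate fact that the structure constants for $\Delta$ on the \'etale group scheme $A_\K(x)$ are $\{0,1\}$-valued, hence integral, so the flat-base-change argument has something to catch onto.
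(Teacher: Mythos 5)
Your proposal is correct and follows the same overall route as the paper: part~(1) by matching dimensions of \'etale subalgebras via \cite[Proposition~13.8]{mil:ALA}, part~(3) by identifying the kernel as $\Spec(\epsilon_0\O[G^x])$, and part~(2) by observing that the structure constants over $\K$ are already in $\O$ and using flatness/torsion-freeness to descend. The one genuine variation is the mechanism in part~(2): the paper does not write down the group-law formula $\Delta(\epsilon_i)=\sum_{g_jg_k=g_i}\epsilon_j\otimes\epsilon_k$, but instead deduces that the unknown coefficients $a_{jk}\in\K$ in the expansion of $\Delta(\epsilon_i)$ must satisfy $a_{jk}^2=a_{jk}$ (because $\Delta$ is an algebra map and the $\epsilon_j\otimes\epsilon_k$ are orthogonal idempotents), forcing $a_{jk}\in\{0,1\}$; similarly $\sigma(\epsilon_i)^2=\sigma(\epsilon_i)$ forces the antipode coefficients into $\{0,1\}$. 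Your version instead identifies $A_\K(x)$ as the constant group scheme on the finite group $A(x_\K)$ (valid under hypothesis~\eqref{eqn:K-rational-idempotents}) and reads off the $\{0,1\}$-valued comultiplication and antipode from the explicit Hopf-algebra structure of a constant group scheme. Both are correct; the paper's argument is more elementary/algebraic and does not require knowing the group structure on the set of components in advance, while yours is more conceptual and makes the source of integrality transparent.
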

\begin{proof}
%The existence of the decomposition in  \eqref{eqn:component-decomp} has already been verified in the comments preceding Remark~\ref{rmk:large-DVR}. 
%Thus, we only need to verify that $(G^x)^\circ$ is a normal subgroup scheme, and that $A(x)$ is a constant group scheme which also satisfies 
%\eqref{eqn:comp-group-base-change}. We have also proven Theorem~\ref{thm:main-thm}, so we can assume that $G^x$ is smooth.

We begin by proving (1). Let us first consider the case where $\bk = \K$, then it suffices to show that
$\K[A(x)_\K] = \K\otimes \O[A(x)] \subseteq \K[G^x_\bk]$ is the largest \'etale subalgebra of $\K[G^x_\K]$. To see why this 
is the case, let $R \subseteq \bk[G^x_\K]$ be the maximal \'etale subalgebra,  so that by definition, $R \supseteq \K[A(x)_\K]$ (such a subalgebra always exists). 
By \cite[Proposition~13.8]{mil:ALA}, $\ol{\K}\otimes R \subseteq \ol{\K}[G^x_{\ol{\K}}]$ also gives the maximal \'etale subalgebra. Now, \cite[Corollary~13.9]{mil:ALA}
and \cite[Lemma~13.4]{mil:ALA} imply
\[
\dim_\K R = \dim_{\ol{\K}}(\ol{\K}\otimes R) = |\{\text{connected components of $G^x_{\ol{\K}}$}\}| = m,
\]
where the rightmost equality follows from assumption \eqref{eqn:K-rational-idempotents}. Thus, $\dim_\K R = \dim_\K \K[A_\K(x)]$, and therefore, 
$R = \K[A_\K(x)]$. 
The same argument also show that $\F[A(x)_\F]$ is the maximal \'etale subalgebra of $\F[G^x_\F]$. 
Finally, suppose $\bk$ is any \emph{field} over $\O$. Then either $\bk \supseteq \K$ or $\bk \supseteq \F$, and in both cases 
\cite[Proposition~13.8]{mil:ALA} implies that $\bk[A(x)_\bk]$ is the maximal \'etale subalgebra of $\bk[G^x_\bk]$. So we have verified (1). 

Now we will verify (2). Let $\Delta$, $\sigma$ and $\varepsilon$ denote the coproduct, antipode and counit for $\O[G^x]$ respectively. 
It suffices to show 
\[
\Delta(\O[A(x)]) \subseteq \O[A(x)]\otimes \O[A(x)],
\]
$\sigma(\O[A(x)]) \subseteq \O[A(x)]$ and $\varepsilon(\O[A(x)]) = \O$. 
%The third identity can be verified immediately by observing 
%that if $I_1 = \ker(\varepsilon) \unlhd \O[G^x]$ is the augmentation ideal, then 
%\[
%\varepsilon(\O[A(x)]) \cong \frac{\O[A(x)]}{\O[A(x)]\cap I_1} \cong \frac{\O[A(x)] + I_1}{I_1} \subseteq \frac{\O[G^x]}{I_1} \cong \O. 
%\]
The third identity can be verified immediately by observing that $\varepsilon(\O[A(x)]) \subseteq \O$ is an $\O$-subalgebra of $\O$, which implies 
equality since $\O$ cannot have any proper $\O$-subalgebras. 

To verify the first two identities, we first note that for an $\O$-algebra $\bk$, the Hopf algebra structure of $\bk[G^x_\bk] = \bk\otimes \O[G^x]$ is given by 
$\Delta_\bk := \id_\bk \otimes \Delta$, $\varepsilon_\bk := \id_\bk \otimes \varepsilon$, 
$\sigma_\bk := \id_\bk \otimes \sigma$. So, in particular, for $\bk = \K$, the first two identities must hold for $\K\otimes \O[A(x)]$ by  (1). 
To verify the second identity, it suffices to show $S(\epsilon_i) \in \O[A(x)]$ for all $i$ since $S$ is $\O$-linear. However, the fact that 
$\O[A(x)]$ is torsion-free, allows us to identify $\O[A(x)] \subset \K \otimes \O[A(x)]$, so that
$\sigma = (\sigma_\K)|_{\O[A(x)]}$. Thus, if $i$ is arbitrary, then 
\[
\sigma(\epsilon_i) = \sigma_\K(\epsilon_i)  = a_0\epsilon_0 + a_1 \epsilon_1 + \cdots + a_{m-1}\epsilon_{m-1} \in (\K\otimes \O[A(x)])\cap \sigma(\O[A(x)])
\]
for some $a_0,\dots, a_{m-1} \in \K$. But since $\sigma$ is a morphism of $\O$-algebras, and the $\epsilon_0, \dots, \epsilon_{m-1}$ are pairwise orthogonal
idempotents, then 
\[
\sigma(\epsilon_i)^2 = \sigma(\epsilon_i^2) = \sigma(\epsilon_i)  
\]
implies 
\[
a_0^2\epsilon_0 + a_1^2\epsilon_1 + \cdots + a_{m-1}^2\epsilon_{m-1} = a_0\epsilon_0 + a_1\epsilon_1 + \cdots + a_{m-1}\epsilon_{m-1}, 
\]
so $a_i^2 = a_i$, and hence, $a_i \in \{0,1\} \subset \O$ for all $i$. 
So the
second identity is verified. 

Similarly, if $i$ is arbitrary, then 
\[
\Delta(\epsilon_i) = \Delta_\K(\epsilon_i) = \sum_{(j,k)} a_{jk}\epsilon_j\otimes \epsilon_k  \in \left(\K\otimes \O[A(x)]\otimes \K\otimes \O[A(x)]\right)  \cap \Delta(\O[A(x)])
\]
for some $a_{jk} \in \K$. Now observe set of $\epsilon_j \otimes \epsilon_k$ gives a linearly independent
 set of pairwise orthogonal idempotents for $\O[G^x] \otimes \O[G^x]$, 
and that $\Delta$ is a morphism algebras. Thus, 
\[
\Delta(\epsilon_i)^2 = \Delta(\epsilon_i^2) = \Delta(\epsilon_i)
\]
implies 
\[
\sum_{(j,k)} a_{jk}^2\epsilon_j\otimes \epsilon_k  = \sum_{(j,k)} a_{jk}\epsilon_j\otimes \epsilon_k,
\]
so that $a_{jk}^2 = a_{jk}$, which forces $a_{jk} \in \{0,1\} \subset \O$. 
Therefore, $\O[A(x)]$ is a Hopf subalgebra of $\O[G^x]$. 

So the inclusion $\O[A(x)] \hookrightarrow \O[G^x]$ of Hopf algebras, induces a surjective map of group schemes
\[
G^x \twoheadrightarrow A(x).
\]
Finally, let $H$ be the kernel of this homomorphism. From the definition of the kernel of a group scheme homomorphism, we have 
\[
\O[H] = \O[G^x]/(\epsilon_1+\cdots+\epsilon_{m-1})\O[G^x] = \O[(G^x)^\circ].
\]
Thus, $H = (G^x)^\circ$, and in particular, $(G^x)^\circ$ is a normal subgroup scheme of $G^x$.

\end{proof}

Now we can prove Theorem~\ref{thm:mcninch-sommers-generalization}. 
\begin{proof}[Prooof of Theorem~\ref{thm:mcninch-sommers-generalization}]
Let us now return to our hypothesis on $\O$, $\K$ and $\F$ from \S\ref{sec:prelim}. Just as in the proof of Theorem~\ref{thm:main-thm}, we begin by replacing 
$\O$ with the completion of its maximal unramified extension, which we denote by $\ol{\O}$. Now the residue field of $\ol{\O}$ is the algebraic closure $\ol{\F}$ 
of $\F$, and let $\K'' \supseteq \K$ be the fraction field of $\ol{\O}$. Also let $\O'$ be a finite integral extension of $\ol{\O}$ 
(with fraction field $\K' \supseteq \K'' \supseteq \K$) such that the hypothesis of Theorem~\ref{thm:integral-comp-group} 
is satisfied. 

Applying Theorem~\ref{thm:integral-comp-group} to this setup, immediately implies $A(x_{\ol{\K'}}) \cong A(x_{\ol{\F}})$, where $\ol{\K'}$ denotes the 
algebraic closure of $\ol{\K'}$. Finally,  $A_{\ol{\K}}$ denote the component group scheme, and noting that $\ol{\K} \subseteq \ol{\K'}$, then it follows from 
\cite[Proposition~13.8]{mil:ALA} that $A(x_{\ol{\K}})\cong A(x_{\ol{\K'}})$ as groups. Therefore, $A(x_{\ol{\K}}) \cong A(x_{\ol{\F}})$. 
\end{proof}

% \begin{thm}
%Let $\O$ be a flat integral extension of $\Z_p$, where $p$ is \emph{very good} and let  $x \in \fg_{\O}$ be a balanced nilpotent section, 
%then $G^x_{\Z_p}$ is torsion-free (and hence flat), which implies 
%$G^x_{\O}$ is flat. 
% \end{thm}
 %(these points exist for every nilpotent orbit by \cite{ls})

% \begin{lem}
% The subgroup $G^x_{\text{tf}}$ is in-fact a normal subgroup scheme, and the ``categorical quotient'' 
% $G^x/G^x_{\text{tf}}$ is a finite affine group scheme. 
% \end{lem}
% \begin{proof}[Sketch]
%We can at least verify normality and finiteness
%on the fibers (which might be enough in this case?). Normality clearly holds on the 
%generic fiber since $G^x_{\text{tf}, \K} = G^x_{\K}$. For the closed fiber, we know by Proposition~\ref{prop:connected} that
%\[
% (G^x_{\F})^{\circ} \leq G^x_{\text{tf},\F} \leq G^x_{\F}.
%\]
%In particular, if we look at the points over the algebraic closure $\overline{\F}$, we can see that the quotient 
%\[
%|G^x_{\F}(\overline{\F})/G^x_{\text{tf},\F}(\overline{\F})| < \infty
%\]
%is discrete, and hence, $G^x_{\text{tf},\F}$ is normal and the quotient should be finite. 
% \end{proof}
% 
% We get a short-exact sequence of group schemes
% \[
% 1 \longrightarrow G^x_{\text{tf}} \longrightarrow G^x \longrightarrow G^x/G^x_{\text{tf}} \longrightarrow 1. 
% \]
%In particular, we get an embedding of algebras 
%\[
%\O[G^x/G^x_{\text{tf}}]\hookrightarrow \O[G^x].
%\]

%$\mathscr{S}$

%\section{Jacobson-Morozov theory over $\O$}
%\section{Flatness in the graded case}
\section{Centralizers for the $G\times \Gm$ action}\label{sec:graded-action}
\subsection{Smoothness in the graded case}
For simplicity, we will introduce the notation $\mathcal{G} = G \times \Gm$. 
The scheme $\fg$ is also equipped with a
$\mathcal{G}$ action, where $\Gm$ acts by the \emph{cohomological action},
${
t\cdot x = t^{-2}x 
}$
for $t \in \Gm(\bk)$ and $x \in \fg(\bk)$ and any $\O$-algebra $\bk$. 
This gives $\O[\fg]$ a non-negative, even grading which is generated in degree 2. Now, for any balanced nilpotent $x \in \fg_\O$,\footnote{Recall that $\fg_\O := \fg(\O)$.}
consider the centralizer $\mathcal{G}^x$, as well as the centralizers for the base-changes 
$\mathcal{G}_{\K}^x$ and $\mathcal{G}_{\F}^x$.

%\begin{lem}
%The scheme-theoretic centralizers $\mathcal{G}_{\K}^x$ and $\mathcal{G}_{\F}^x$ are smooth (and hence reduced). 
%\end{lem}
%\begin{proof}
%Let $\mathbbm{k} \in \{\K, \F\}$, then we can see that the reduced subscheme $\mathcal{G}^x_{rd, \lk}$ of $\mathcal{G}_{\lk}^x$ is smooth 
%by applying the statement of Lemma~\ref{lem:scaling-centralizer} to the reduced subscheme and deducing smoothness from the smoothness 
%of $G_{\lk}^x$. 
%By \cite[I.7.11(11)]{jantzen},  
%\[
%\text{Lie}(\mathcal{G}_{\lk}^{x}) = \text{Lie}(\mathcal{G}_{\lk})^x = \text{Lie}(\mathcal{G}^x_{rd,\lk}),
%\]
%and thus, 
%\[
% \dim\, \mathcal{G}_{\lk}^x \leq \dim_{\lk}\text{Lie}(\mathcal{G}_{\lk}^{x}) =\dim\, \mathcal{G}^x_{rd,\lk} \leq \dim\, \mathcal{G}_{\lk}^x.
%\]
%\end{proof}
%
Suppose now that $x \in \fg_{\O}$ is a balanced nilpotent section, then by
 \cite[Theorem 1.2.1]{mcninch2016}, there exists an integral \emph{associated cocharacter} 
\[
\phi_x: \Gm \longrightarrow G^x,
\]
such that $\phi_{x,\K}$ and $\phi_{x,\F}$ are the associated cocharacters arising from the Jacobson-Morozov triples for $x_{\K}$ and $x_{\F}$ respectively.
%(without loss of confusion, we will often write $x$ and $\phi_x$ for these base changes as well). 
%\begin{rmk}
%For a background on associated cocharacters over algebraically closed fields in good characteristic, see \cite[Chapter 5]{jantzen-nilp}.
%\end{rmk}

Let ${\text{Int}: (G^x)^{op} \longrightarrow \text{Aut}(G^x)}$, with $\text{Int}_h(g) = hgh^{-1}$ for $h, g \in G^x(\bk)$ be a
 morphism of group schemes. 
An action of $\Gm$ on $G^x$ can then be given by
\begin{equation}\label{eqn:graded-action}
t \cdot g = \phi_x(t^{-1})g \phi_x(t) = \text{Int}_{\phi_x(t^{-1})}(g),
\end{equation}
for $g \in G^x(\bk)$ and any $\O$-algebra $\bk$.  
Let 
\[
\Gm \ltimes_{\phi_x} G^x,
\]
be the semi-direct product formed from this action. 
\begin{prop}\label{lem:integral-scaling}
There is an isomorphism of group schemes
$$
\Gm \ltimes_{\phi_x} G^x \xrightarrow{\sim} \cG^x \subseteq G \times \Gm,
$$
given by $t \ltimes g \mapsto (g \phi_x(t^{-1}), t^{-1})$ for any $g \in G(\bk)$, $t \in \Gm(\bk)$ and any $\O$-algebra $\bk$. 
\end{prop}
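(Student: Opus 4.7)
My plan is to verify the isomorphism functorially: fix an $\O$-algebra $\bk$ and show that the assignment $t \ltimes g \mapsto (g\phi_x(t^{-1}), t^{-1})$ defines a group isomorphism on $\bk$-points, then conclude by Yoneda. The only non-formal input is the defining property of the integral associated cocharacter supplied by \cite[Theorem~1.2.1]{mcninch2016}, namely $\mathrm{Ad}(\phi_x(s))(x) = s^2 x$ for every $s \in \Gm(\bk)$. Combined with the description of the $\cG$-action on $\fg$, an element $(g', t') \in \cG(\bk)$ lies in $\cG^x(\bk)$ precisely when $\mathrm{Ad}(g')(x) = t'^{2} x$.

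With these observations in hand, the first step is to check that the proposed map lands in $\cG^x(\bk)$: for $g \in G^x(\bk)$, a one-line calculation using the cocharacter identity yields $\mathrm{Ad}(g\phi_x(t^{-1}))(x) = (t^{-1})^2 x$, as required. The second step is to produce a set-theoretic inverse by the rule $(g', t') \mapsto t'^{-1} \ltimes g' \phi_x(t')$; the same cocharacter identity combined with the centralizer condition shows $g'\phi_x(t') \in G^x(\bk)$, and the two formulas are visibly mutually inverse. The third step is to verify the group-homomorphism property: recall that the multiplication in $\Gm \ltimes_{\phi_x} G^x$ induced by \eqref{eqn:graded-action} reads
\[
(s \ltimes h)(t \ltimes g) = (st) \ltimes \bigl(h \, \phi_x(s^{-1}) g \, \phi_x(s)\bigr),
\]
while the multiplication on $\cG^x \subseteq G \times \Gm$ is coordinatewise. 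A direct substitution shows that both routes around the diagram produce the same pair $\bigl(h \phi_x(s^{-1}) g \phi_x(t^{-1}), (st)^{-1}\bigr)$.

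I do not anticipate a genuine obstacle: the proof reduces to three short functorial verifications once $\phi_x$ is available. The only subtle point is tracking the exponents in the cocharacter correctly and observing that the construction is \emph{integral}, i.e.\ that the morphism is defined over $\O$ rather than merely over $\K$ and $\F$ separately. This integrality is precisely what is absorbed from \cite[Theorem~1.2.1]{mcninch2016}, and is the substantive content behind what is otherwise a formal semidirect-product identification.
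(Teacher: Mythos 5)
Your overall strategy coincides with the paper's: characterize $\cG^x(\bk)$ by the condition $\mathrm{Ad}_{g'}(x) = (t')^2 x$ for $(g',t') \in \cG(\bk)$, appeal to the defining identity $\mathrm{Ad}_{\phi_x(s)}(x) = s^2 x$ for the integral associated cocharacter, and conclude that the map is a functorial bijection, hence an isomorphism of group schemes. Your explicit verification of the homomorphism property in step three is correct and is in fact more detailed than the paper's, which simply asserts the resulting bijection is ``canonically a group isomorphism.''

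There is, however, a sign error in your second step. The claimed inverse $(g',t') \mapsto (t')^{-1} \ltimes g'\phi_x(t')$ does not undo the forward map: composing forward-then-backward sends $t \ltimes g$ to $t \ltimes g\phi_x(t^{-2})$, not to $t \ltimes g$. Relatedly, the assertion that ``the cocharacter identity combined with the centralizer condition shows $g'\phi_x(t') \in G^x(\bk)$'' is false: from $\mathrm{Ad}_{g'}(x) = (t')^2 x$ and $\mathrm{Ad}_{\phi_x(t')}(x) = (t')^2 x$ one gets $\mathrm{Ad}_{g'\phi_x(t')}(x) = (t')^4 x$, which is not $x$ in general. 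The element that actually lands in $G^x(\bk)$ is $g'\phi_x((t')^{-1})$, since $\mathrm{Ad}_{g'\phi_x((t')^{-1})}(x) = (t')^{-2}(t')^2 x = x$, and the correct inverse is accordingly $(g',t') \mapsto (t')^{-1} \ltimes g'\phi_x((t')^{-1})$. With this correction the proof is complete and agrees with the paper's argument.
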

\begin{proof}
Let us first note that this map is well-defined and natural for any $\O$-algebra $\bk$. 
Now, letting $\bk$ be arbitrary and identifying $x \in \fg(\bk)$ with its image under the morphism $\fg(\O) \rightarrow \fg(\bk)$,  
we observe that  $(g,t) \in \cG^x(\bk) \subseteq (G \times \Gm)(\bk)$ if and only if
\[
x = (g,t)\cdot x = \text{Ad}_{(1,t)} \circ \text{Ad}_{(g,1)}x = t^{-2}\text{Ad}_gx,
\]
or equivalently,
${
\text{Ad}_{g}x = t^{2}x.
}$
Now since $\phi_x$ is an associated cocharacter, then $x \in \fg_2$ with respect
to its induced grading on $\fg$ (equivalently ${\text{Ad}_{\phi_x(t)}x = t^2x}$ for any $t \in \Gm(\bk)$). Thus, 
\[
\text{Ad}_{g}x = \text{Ad}_{\phi_x(t)}x \iff \text{Ad}_{g\phi_x(t^{-1})}x = x,
\]
and so $g\phi_x(t^{-1}) \in G^x(\bk)$. Therefore, 
\[
 (g,t) \in \cG^x(\bk) \iff (g,t) = (h\phi_x(t),t) \text{ for $h = g\phi_x(t^{-1}) \in G^x(\bk)$}.
\]
Therefore, this map is canonically a group isomorphism for every $\O$-algebra $\bk$, and hence is an isomorphism of group schemes. 
\end{proof}

%The following corollary is now immediate. 
\begin{cor}\label{cor:smoothness-field}
The scheme-theoretic centralizers $\mathcal{G}_{\K}^x$ and $\mathcal{G}_{\F}^x$ are smooth. 
\end{cor}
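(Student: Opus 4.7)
The plan is to deduce this directly from Proposition~\ref{lem:integral-scaling}, which gives an isomorphism of $\O$-group schemes
\[
\Gm \ltimes_{\phi_x} G^x \xrightarrow{\sim} \cG^x.
\]
Base-changing this isomorphism along $\O \to \K$ and $\O \to \F$ yields isomorphisms $\cG^x_\K \cong \Gm \ltimes_{\phi_{x,\K}} G^x_\K$ and $\cG^x_\F \cong \Gm \ltimes_{\phi_{x,\F}} G^x_\F$, so it suffices to check smoothness of the respective semidirect products.

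The key observation is that smoothness is a property of the underlying scheme, and the underlying scheme of any semidirect product $H_1 \ltimes H_2$ is simply the product $H_1 \times H_2$ (the twisted multiplication is irrelevant here). Thus, for $\bk \in \{\K, \F\}$, the underlying $\bk$-scheme of $\Gm \ltimes_{\phi_{x,\bk}} G^x_\bk$ is $\Gm \times_\bk G^x_\bk$.

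Now $\Gm$ is smooth over any base, and $G^x_\K$, $G^x_\F$ are smooth by the very definition of a balanced nilpotent section (Definition~\ref{defn:balanced-section}). Since smoothness is stable under products over a field, the schemes $\Gm \times_\bk G^x_\bk$ are smooth for $\bk \in \{\K, \F\}$, and the corollary follows. I expect no real obstacle here — the content has already been concentrated in Proposition~\ref{lem:integral-scaling} (which trivializes the centralizer as a semidirect product via the associated cocharacter) and in the balancedness hypothesis (which supplies smoothness of the fibers of $G^x$).
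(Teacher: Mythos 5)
Your proof is correct and takes essentially the same route the paper intends: the corollary is stated immediately after Proposition~\ref{lem:integral-scaling} with no separate proof, precisely because base-changing the semidirect-product isomorphism to $\K$ and $\F$ and noting that the underlying scheme of $\Gm \ltimes_{\phi_{x,\bk}} G^x_\bk$ is $\Gm \times_\bk G^x_\bk$ (smooth, by Definition~\ref{defn:balanced-section} and stability of smoothness under products over a field) is exactly the intended argument.
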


Now recall that if $\bk$ is a field over $\O$, then there exists an additional Levi-decomposition of $G^x_{\bk}$. 
Let $G^{x}_{\bk, \text{red}} \times \Gm$ act on $G^{x}_{\bk, \text{unip}}$ by 
\[
(g,t)\cdot u = g\phi_{x}(t^{-1})u\phi_x(t)g^{-1},
\]
for $(g,t) \in (G^{x}_{red,\bk}\times \Gm)(\bk')$, $u \in G^{x}_{\bk, \text{unip}}(\bk')$ and any $\bk$-algebra $\bk'$. 
Let \\
$(G^{x}_{\bk,\text{red}} \times \Gm) \ltimes G^{x}_{\bk, \text{unip}}$ be the semi-direct product formed from this action. 

\begin{cor}\label{lem:scaling-centralizer}
For $\bk \in \{\K, \F\}$, there is an isomorphism
$$
(G^{x}_{\bk,\text{red}} \times \Gm) \ltimes G^{x}_{\bk, \text{unip}} \xrightarrow{\sim} \mathcal{G}_{\bk}^{x} 
$$
given by
$(g,t) \ltimes u \mapsto (g\phi_{x}(t^{-1})u, t^{-1})$ for 
${(g,t)\ltimes u \in \big((G^{x}_{\bk,\text{red}} \times \Gm)\ltimes G^{x}_{\bk,\text{unip}}\big)(\bk')}$ and any $\bk$-algebra $\bk'$. 
In particular, 
${\cG^x_{\bk,\text{red}} \cong G^{x}_{\bk,\text{red}} \times \Gm }$ and 
${\cG^x_{\bk,\text{unip}} \cong G^{x}_{\bk,\text{unip}} }$.
\end{cor}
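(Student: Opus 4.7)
The plan is to deduce Corollary~\ref{lem:scaling-centralizer} from Proposition~\ref{lem:integral-scaling} by slotting in the Levi decomposition of $G^x_\bk$. Since we are working over a field $\bk \in \{\K, \F\}$, the balanced hypothesis guarantees that the centralizer $G^x_\bk$ has the Levi decomposition recorded in \eqref{eqn:levi}, and the integral associated cocharacter $\phi_x$ of \cite[Theorem~1.2.1]{mcninch2016} base-changes to the associated cocharacter of a Jacobson--Morozov triple for $x_\bk$; in particular, $\phi_{x,\bk}$ factors through $G^x_{\bk,\text{red}}$.

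First, I would base change Proposition~\ref{lem:integral-scaling} to $\bk$, yielding an isomorphism $\Gm \ltimes_{\phi_{x,\bk}} G^x_\bk \simto \cG^x_\bk$ given on $\bk'$-points by $t \ltimes g \mapsto (g\phi_x(t^{-1}), t^{-1})$. Next, substitute the Levi decomposition \eqref{eqn:levi} into the right factor and observe that the $\Gm$-action via $\phi_x$ on $G^x_\bk = G^x_{\bk,\text{red}} \ltimes G^x_{\bk,\text{unip}}$ preserves both factors (since $\phi_x$ lands in $G^x_{\bk,\text{red}}$, which normalizes $G^x_{\bk,\text{unip}}$ and centralizes itself set-theoretically in the sense of containing $\phi_x$). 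Consequently, the semidirect product can be reassociated as
\[
\Gm \ltimes_{\phi_x} G^x_\bk \;\cong\; \bigl(\Gm \ltimes_{\phi_x} G^x_{\bk,\text{red}}\bigr) \ltimes G^x_{\bk,\text{unip}},
\]
where the $\Gm$-action on $G^x_{\bk,\text{unip}}$ in the outer semidirect product is the one described in the statement of the corollary.

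Then I would use the elementary fact that whenever a group acts on another by inner automorphisms through a homomorphism into the target, the resulting semidirect product becomes a direct product. Concretely, the map $(t,g) \mapsto (t, g\phi_x(t^{-1}))$ gives an isomorphism $\Gm \ltimes_{\phi_x} G^x_{\bk,\text{red}} \simto \Gm \times G^x_{\bk,\text{red}}$, because the inner action by $\phi_x(t^{-1})$ is trivialized after translating by $\phi_x(t^{-1})$. This identifies $\cG^x_\bk$ with $(G^x_{\bk,\text{red}} \times \Gm) \ltimes G^x_{\bk,\text{unip}}$, and chasing an element $(g,t) \ltimes u$ through the three identifications above yields the explicit formula $(g,t) \ltimes u \mapsto (g\phi_x(t^{-1})u, t^{-1})$ stated in the corollary. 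The final ``in particular'' assertion then follows from the uniqueness (up to conjugacy) of the Levi decomposition: the identification just constructed exhibits $G^x_{\bk,\text{red}} \times \Gm$ as a reductive Levi subgroup and $G^x_{\bk,\text{unip}}$ as the unipotent radical of $\cG^x_\bk$.

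The only point that requires a modicum of care is the claim that $\phi_{x,\bk}$ lands in $G^x_{\bk,\text{red}}$ so that both the reassociation of the semidirect product and the subsequent ``inner action $\Rightarrow$ direct product'' trick are available; this is the standard behavior of associated cocharacters of Jacobson--Morozov triples as invoked in \cite{mcninch2016}. Everything else reduces to bookkeeping with the explicit $\bk'$-point formulas, and in particular the bijectivity of the candidate map on $\bk'$-points can be read off directly from the characterization $(g',s) \in \cG^x_\bk(\bk') \Leftrightarrow g'\phi_x(s^{-1}) \in G^x_\bk(\bk')$ established in the proof of Proposition~\ref{lem:integral-scaling}, combined with the unique Levi decomposition of $g'\phi_x(s^{-1})$.
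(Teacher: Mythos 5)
Your overall strategy---deduce Corollary~\ref{lem:scaling-centralizer} from Proposition~\ref{lem:integral-scaling} by slotting in the Levi decomposition and reassociating semidirect products---matches the spirit of the paper's one-line proof, which says exactly that the map is ``canonically equivalent'' to the one in Proposition~\ref{lem:integral-scaling}. However, there is a genuine technical slip in the way you justify the intermediate steps.

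You assert that ``$\phi_x$ lands in $G^x_{\bk,\text{red}}$'' and build the reassociation and the ``inner action $\Rightarrow$ direct product'' untwist on this claim. But the associated cocharacter does \emph{not} factor through $G^x$ at all: by definition it satisfies $\mathrm{Ad}_{\phi_x(t)}(x) = t^2 x$, so $\phi_x(t) \notin G^x(\bk')$ for generic $t$. (The ``$\phi_x : \Gm \to G^x$'' in the paper is a typo for $\phi_x : \Gm \to G$; this is clear from the proof of Proposition~\ref{lem:integral-scaling}, which uses $\mathrm{Ad}_{\phi_x(t)} x = t^2 x$.) Consequently the untwist $(t,g) \mapsto (t, g\phi_x(t^{-1}))$ is not a well-defined morphism $\Gm \ltimes_{\phi_x} G^x_{\bk,\text{red}} \to \Gm \times G^x_{\bk,\text{red}}$, since $g\phi_x(t^{-1})$ does not lie in $G^x_{\bk,\text{red}}$. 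The correct statement, which is what you actually need, is that $G^x_{\bk,\text{red}}$ is the \emph{centralizer} of $\phi_x$ in $G^x_\bk$ (this is how the Levi decomposition of \cite[Corollary~29]{mcninch2004} is defined). Once you use this, the $\Gm$-action on $G^x_{\bk,\text{red}}$ induced by $\eqref{eqn:graded-action}$ is already \emph{trivial}, so $\Gm \ltimes_{\phi_x} G^x_{\bk,\text{red}} = \Gm \times G^x_{\bk,\text{red}}$ with no untwist needed, and the reassociation works directly. With that correction, the verification that the explicit formula in the corollary is a group homomorphism goes through (the commutativity of $G^x_{\bk,\text{red}}$ with $\phi_x$ is exactly what lets the $\phi_x$ factors move past the $g_i$ in the computation). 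So the idea is right, but the justification for the key commutation used in the untwist/reassociation step needs to be stated in terms of $G^x_{\bk,\text{red}}$ centralizing $\phi_x$, not $\phi_x$ landing inside $G^x_{\bk,\text{red}}$.
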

\begin{proof}
This follows by observing that for any $\bk$-algebra $\bk'$, this map is canonically equivalent to the one in Proposition~\ref{lem:integral-scaling},
and is therefore an isomorphism.
\end{proof}

%\begin{cor}\label{cor:graded-comp-groups}
%Let $x \in \fg_\O$ be a balanced nilpotent section, and let $\bk \in \{\K, \F\}$, then 
%$
%\mathcal{G}^x_{\ol{\bk}}/(\mathcal{G}^x_{\ol{\bk}})^\circ (\ol{\bk}) \cong  A(x_{\ol{\bk}}).
%$
%\end{cor}

%This lemma also gives 
%\begin{equation}
%\cG^x_{\bk} \cong \Gm \ltimes_{\phi_x} G^x_{\bk} \cong \Gm \ltimes_{\phi_x} (G^x_{red,\bk} \ltimes G^x_{unip, \bk}),
%\end{equation}
%where the semi-direct product is induced by the action 
%\begin{equation}\label{eqn:graded-action}
%t \cdot (g,u) = (g, \phi_{x}(t^{-1})u\phi_x(t))
%\end{equation}
%with $G^x_{\bk} = G^x_{red, \bk} \ltimes G^x_{unip, \bk}$. 
The following theorem is now immediate. 
 \begin{thm}\label{thm:graded-analogues}
Let $x \in \fg_\O$ be a balanced nilpotent section, and let $\bk \in \{\K, \F\}$, then
%\begin{enumerate}
the morphism $\mathcal{G}^x \rightarrow \Spec(\O)$ is smooth and 
$
\mathcal{G}^x_{\ol{\bk}}/(\mathcal{G}^x_{\ol{\bk}})^\circ (\ol{\bk}) \cong  A(x_{\ol{\bk}}).
$

Furthermore, if the conditions of Theorem~\ref{thm:integral-comp-group} are satisfied, then 
the morphism $G^x \twoheadrightarrow A(x)$ lifts to a group scheme homomorphism 
$\mathcal{G}^x \twoheadrightarrow A(x)$, where the base changes $A(x)_\bk$ also give the component group
schemes for $\mathcal{G}^x_{\bk}$ (i.e. there are group scheme isomorphisms
$
\mathcal{G}_\bk^x/(\mathcal{G}_\bk^x)^\circ \cong G^x_\bk/(G^x_\bk)^\circ.)
$
%for $\bk \in \{\K, \F\}$. 
%\end{enumerate}
%The natural analogues to Theorem~\ref{thm:main-thm},  Theorem~\ref{thm:mcninch-sommers-generalization} and Theorem~\ref{eqn:component-decomp} hold in this setting. 
\end{thm}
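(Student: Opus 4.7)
The plan is to reduce everything to Proposition~\ref{lem:integral-scaling}, which identifies $\cG^x$ with the twisted product $\Gm \ltimes_{\phi_x} G^x$; as an $\O$-scheme this is just $\Gm \times_{\Spec(\O)} G^x$, and Theorem~\ref{thm:main-thm} then implies at once that $\cG^x \to \Spec(\O)$ is smooth. After base change to $\ol{\bk}$ one has $\cG^x_{\ol{\bk}} \cong \Gm_{\ol{\bk}} \ltimes (G^x_{\ol{\bk}})$; since $\Gm_{\ol{\bk}}$ is connected, the underlying scheme of the closed normal subgroup $\Gm_{\ol{\bk}} \ltimes (G^x_{\ol{\bk}})^\circ$ is a product of connected schemes, hence connected, so it is contained in $(\cG^x_{\ol{\bk}})^\circ$ with equal dimension. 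This yields the identification $\cG^x_{\ol{\bk}}/(\cG^x_{\ol{\bk}})^\circ(\ol{\bk}) \cong A(x_{\ol{\bk}})$.

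For the second assertion, under the hypotheses of Theorem~\ref{thm:integral-comp-group} there is a surjection $\pi\colon G^x \twoheadrightarrow A(x)$ with kernel $(G^x)^\circ$. I would define the desired lift $\tilde\pi \colon \cG^x \to A(x)$ by $(t,g) \mapsto \pi(g)$ under the identification of Proposition~\ref{lem:integral-scaling}. Checking that $\tilde\pi$ is a group homomorphism amounts to verifying $\pi \circ \mathrm{Int}_{\phi_x(t)} = \pi$ on $G^x$ for all $t$, i.e., that conjugation by the associated cocharacter induces the trivial automorphism of $A(x)$. The first half of this is to show that $\phi_x$ factors through $(G^x)^\circ$: since $\Gm$ is irreducible over $\Spec(\O)$, the comorphism $\phi_x^*$ sends each orthogonal idempotent $\epsilon_i$ ($i \geq 1$) from \eqref{eqn:O-idempotent} to an idempotent of the integral domain $\O[\Gm]$ that vanishes at $1$, hence to $0$.

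The main technical step is to show that conjugation by an arbitrary section of $(G^x)^\circ$ acts trivially on $A(x)$. I would phrase this as a rigidity statement: the two morphisms $(G^x)^\circ \times G^x \to A(x)$ sending $(h,g)$ to $\pi(hgh^{-1})$ and to $\pi(g)$ must coincide. Since $(G^x)^\circ$ and $G^x$ are smooth (Proposition~\ref{prop:connected} and Theorem~\ref{thm:main-thm}) and $A(x)$ is \'etale, hence separated, it suffices to compare the two morphisms after base change to $\K$ and $\F$; there the statement becomes that an inner automorphism of $G^x_\bk$ coming from the connected group $(G^x_\bk)^\circ$ permutes the connected components trivially, which is immediate since such an automorphism is connected to the identity. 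Once these two points are in hand, $\tilde\pi$ is a well-defined group scheme homomorphism with kernel $\Gm \ltimes_{\phi_x} (G^x)^\circ$; this kernel is $\O$-smooth with geometrically connected fibers by the first assertion, so its base changes compute $(\cG^x_\bk)^\circ$ and give the isomorphisms $\cG^x_\bk/(\cG^x_\bk)^\circ \cong A(x)_\bk \cong G^x_\bk/(G^x_\bk)^\circ$.
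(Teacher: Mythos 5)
Your proof is correct and follows essentially the same route as the paper, which declares the theorem ``immediate'' after Proposition~\ref{lem:integral-scaling} (the semidirect product decomposition $\cG^x \cong \Gm \ltimes_{\phi_x} G^x$) combined with Theorem~\ref{thm:main-thm} and connectedness of $\Gm$. You have simply filled in the details the paper leaves to the reader — in particular the two steps needed for the lift $\tilde\pi(t,g) = \pi(g)$: that $\phi_x$ factors through $(G^x)^\circ$ (via idempotents landing in the integral domain $\O[\Gm]$), and that conjugation by the identity component acts trivially on $A(x)$ (via flatness of the source and separatedness of the \'etale target).
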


% \begin{thm}\label{thm:graded-analogues}
%%Assume that $\O$ satisfies the hypothesis of Theorem~\ref{eqn:component-decomp}, then 
%% $\cG^x_{\K}$ and $\cG^x_{\F}$ are isomorphic. 
%The natural analogues to Theorem~\ref{thm:main-thm},  Theorem~\ref{thm:mcninch-sommers-generalization} and Theorem~\ref{eqn:component-decomp} hold in this setting. 
%\end{thm}
%\begin{proof}
%%Let $\bk \in \{\K, \F\}$. Now since $G^{x}_{unip, \bk}$ is connected, then by Lemma~\ref{lem:scaling-centralizer},
%% \[
%% \cG^x_{\bk}/(\cG^x_{\bk})^{\circ} \cong (G^{x}_{red,\bk} \times \Gm)/(G^{x}_{red,\bk} \times \Gm)^{\circ}
%%  \cong G^{x}_{red,\bk}/(G^{x}_{red,\bk})^{\circ}.
%% \]
% The claim follows from Proposition~\ref{lem:integral-scaling} the assumption that $G^{x}_{\bk, \text{red}}/(G^{x}_{\bk,\text{red}})^{\circ}$ are the same for either $\bk$.
%\end{proof}

%Now we can simply imitate the proof of Proposition~\ref{prop:connected} by proving analogues to
% Lemmas~\ref{lem:dist-compare}, \ref{lem:inf-flat}, and then apply same arguments as in \ref{subsec:disconnected-case} to get the
% following.
%Flatness now follows immediately from Proposition~\ref{lem:integral-scaling} and 
% \begin{prop}\label{prop:graded-flatness}
% For any balanced nilpotent section $x \in \fg_\O$,  $\cG^x$ is smooth. 
%  \end{prop}

\subsection{Integral lattices}
For a balanced nilpotent section $x \in \fg_{\O}$, let $H \in \{G^x, \cG^x\}$, then the flatness of $H$ now implies that for any finite-dimensional 
$H_{\K}$-module
$V$, there exists an $H$-stable $\O$-lattice $M\subset V$ (cf. \cite[I.10.4]{jantzen}).  
%Let 
%\[
%G^x_{\bk} = G^x_{red, \bk} \ltimes G^x_{unip, \bk}, \quad \cG^x_{\bk} = \cG^x_{red, \bk} \ltimes \cG^x_{unip, \bk}.
%\]
%be the respective Levi decompositions for $\bk \in \{\K, \F\}$. 

\begin{lem}\label{lem:grading-unipotent}
Let $\bk \in \{\K, \F\}$. If $N$ is a $\cG^x_{\bk} = \Gm \ltimes_{\phi_x} G^x_{\bk}$-module such that $(t \ltimes g)\cdot n = (1\ltimes g) \cdot n$ for any
$t \ltimes g \in (\Gm \ltimes_{\phi_x} G^x_{\bk})(\bk')$, $n \in N(\bk')$ and any $\bk$-algebra $\bk'$, then $G^x_{\bk,\text{unip}}$ acts trivially on $N$. 
\end{lem}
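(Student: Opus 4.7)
The plan is to recast the hypothesis as the triviality of the $\Gm$-subgroup $\{t\ltimes 1\}$ acting on $N$, and then to exploit the contracting behavior of the associated cocharacter $\phi_x$ on the unipotent radical $G^x_{\bk,\text{unip}}$. First, setting $g=1$ in the hypothesis gives $(t\ltimes 1)\cdot n=n$ for all $t\in\Gm(\bk')$ and $n\in N(\bk')$, so the copy of $\Gm$ embedded as $\{t\ltimes 1\}$ does indeed act trivially on $N$. The semi-direct product identity
\[
(t^{-1}\ltimes 1)(1\ltimes g)(t\ltimes 1)\;=\;1\ltimes\text{Int}_{\phi_x(t)}(g),
\]
verified directly from the multiplication rule in $\Gm\ltimes_{\phi_x}G^x_\bk$, combined with this triviality then forces
\[
(1\ltimes\text{Int}_{\phi_x(t)}(g))\cdot n\;=\;(1\ltimes g)\cdot n
\]
for all $t\in\Gm$, $g\in G^x_\bk$, and $n\in N$. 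Passing to a finite-dimensional sub-comodule $V\subseteq N$ containing a chosen element (which exists by local finiteness of comodules, cf.\ \cite[I.2.13]{jantzen}), this translates into the statement that the morphism $a\circ(\sigma\times\id_V)\colon\Gm\times G^x_{\bk,\text{unip}}\times V\to V$ is independent of the $\Gm$-factor, where $a$ is the restriction of the action morphism to the unipotent radical acting on $V$, and $\sigma(t,u)=\text{Int}_{\phi_x(t)}(u)$.

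Next I would exploit the defining property of an associated cocharacter: $\text{Lie}(G^x_{\bk,\text{unip}})$ is the sum of the strictly positive $\text{Ad}_{\phi_x}$-weight spaces, and since $G^x_{\bk,\text{unip}}$ is split unipotent by \cite[Theorem~28]{mcninch2004}, it follows that the conjugation action $\sigma$ has strictly positive weights on the coordinate ring of $G^x_{\bk,\text{unip}}$. This lets $\sigma$ extend to a morphism $\bar\sigma\colon\A^1\times G^x_{\bk,\text{unip}}\to G^x_{\bk,\text{unip}}$ with $\bar\sigma(0,u)=1$ for every $u$. I then form the composition $\bar F=a\circ(\bar\sigma\times\id_V)\colon\A^1\times G^x_{\bk,\text{unip}}\times V\to V$; its restriction to the schematically dense open $\Gm\subset\A^1$ is already independent of the $\Gm$-factor by the first paragraph, and separatedness of $V$ then forces $\bar F$ itself to be independent of the $\A^1$-factor. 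Evaluating at $0\in\A^1$ gives $\bar F(0,u,v)=a(1,v)=v$, whereas at any $t\neq 0$ it equals $a(u,v)$; therefore $a(u,v)=v$ identically, which is the desired triviality.

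The main obstacle I foresee is justifying the $\A^1$-extension of $\sigma$: this rests on $\phi_x$ having \emph{strictly} positive (and not merely non-negative) weights on $\text{Lie}(G^x_{\bk,\text{unip}})$, so that, through a $\Gm$-equivariant identification $G^x_{\bk,\text{unip}}\cong\A^d$, the coordinates pull back into $\bk[t]\otimes\bk[G^x_{\bk,\text{unip}}]$ rather than into Laurent polynomials. Granting this, the extension is the standard contracting action, and the remainder of the proof reduces to a routine density-plus-separation argument on the affine scheme $V$.
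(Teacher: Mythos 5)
Your proof is correct and takes a genuinely different route from the paper's. Your argument is geometric: after recasting the hypothesis as triviality of the copy of $\Gm$ embedded as $\{t\ltimes 1\}$ and conjugating to obtain the identity $(1\ltimes\text{Int}_{\phi_x(t)}(g))\cdot n = (1\ltimes g)\cdot n$, you exploit the contracting behavior of $\phi_x$ to extend the conjugation morphism $\sigma$ over $\A^1$ and then specialize to $t=0$ by a density/separatedness argument. The paper instead differentiates: working over $\overline{\bk}$, it observes that the group-variety homomorphism $\psi\colon G^x_{\bk}\to GL(N)$ is $\Gm$-equivariant with $\Gm$ acting trivially on the target, decomposes $d\psi$ into weight pieces $d\psi_k\colon(\fg^x_{\bk})_k\to\mathfrak{gl}(N)_k$, and notes that $\mathfrak{gl}(N)$ is concentrated in degree $0$ while, by \cite[Proposition 5.10]{jantzen-nilp}, $\text{Lie}(G^x_{\bk,\text{unip}})$ lives in strictly negative degrees (with the paper's sign convention); hence $d\psi$ annihilates $\text{Lie}(G^x_{\bk,\text{unip}})$, and $\psi$ kills $G^x_{\bk,\text{unip}}$. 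Both proofs rest on the same structural fact about the associated cocharacter, namely that the $\phi_x$-weights on $\text{Lie}(G^x_{\bk,\text{unip}})$ are concentrated strictly on one side of zero. Your contracting-limit argument sidesteps the inference $\text{Lie}\,\psi(U)=d\psi(\text{Lie}\,U)$, which in characteristic $p$ requires a bit of care about separability; in exchange you must justify the $\A^1$-extension of $\sigma$, as you correctly flag. That extension is perhaps most painlessly obtained not via a $\Gm$-equivariant coordinatization $G^x_{\bk,\text{unip}}\cong\A^d$, but from the dynamic description of the associated cocharacter: $G^x_{\bk,\text{unip}}$ is contained in $U(\phi_x)=\{g : \lim_{t\to 0}\phi_x(t)\,g\,\phi_x(t)^{-1}=1\}$, which supplies the extension together with the value $\bar\sigma(0,u)=1$ directly.
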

\begin{proof}
If we assume the hypothesis, then since $\cG^x_{\bk}$, $\Gm$ and $G^x_{\bk}$ are all reduced by Corollary~\ref{cor:smoothness-field}, 
it suffices to show that
$G^x_{\bk, \text{unip}}(\overline{\bk})$ acts trivially on $N(\overline{\bk}) = N \otimes_{\bk}\overline{\bk}$. 
So without loss of generality, assume that $\bk = \overline{\bk}$, so that it suffices to work with the
geometric points. 

The restriction of $N$ to $G^x_{\bk}$ corresponds to a group variety homomorphism 
\[
\psi: G^x_{\bk} \longrightarrow GL(N),
\]
and our goal is show $\psi(G^x_{\bk,\text{unip}}) = \{1\}$. However, since 
\[
\text{Lie}\, \psi(G^x_{\bk,\text{unip}})  = d\psi\, (\text{Lie}\, G^x_{\bk, \text{unip}}),
\]
then it suffices to show ${d\psi\, (\text{Lie}\, G^x_{\bk,\text{unip}}) = 0}$.

The fact that $N$ is a $(\Gm \ltimes_{\phi_x} G^x_{\bk})$-module is equivalent to saying that 
$\psi$ is $\Gm$-equivariant, where $\Gm \curvearrowright G^x_{\bk}$ via \eqref{eqn:graded-action} (e.g. by 
$\text{Int}_{\phi_x(t^{-1})}$), and that
$\Gm \curvearrowright GL(N)$ trivially by the hypothesis. Moreover, there are induced actions of $\Gm$ on the respective Lie 
algebras (given by the adjoint action), so that 
\[
d\psi: \fg^x_{\bk} \longrightarrow \mathfrak{g}\mathfrak{l}(N)
\]
is also $\Gm$-equivariant. Equivalent the respective Lie algebras are given gradings
\[
\fg^x_{\bk} = \bigoplus_{k \in \Z} (\fg^x_{\bk})_k, \quad \mathfrak{g}\mathfrak{l}(N) = \bigoplus_{k \in \Z} \mathfrak{g}\mathfrak{l}(N)_k,
\]
where the $\Gm$-equivariance of $d\psi$ induces a decomposition
${d\psi = \bigoplus_{k \in \Z} d\psi_k}$ such that
\[
d\psi_k: (\fg^x_{\bk})_k\longrightarrow \mathfrak{g}\mathfrak{l}(N)_k.
\]
Observe now that 
${\mathfrak{g}\mathfrak{l}(N) = \mathfrak{g}\mathfrak{l}(N)_0}$ and $d\psi_k =0$ for all $k \neq 0$,
 since the action of $\Gm$ on $GL(N)$, and hence on $ \mathfrak{g}\mathfrak{l}(N)$, is trivial.
By \eqref{eqn:graded-action}, the action of any  $t \in \Gm$ is given by the automorphism
\[
d(\text{Int}_{\phi_x}(t^{-1})) = \text{Ad}_{\phi_x(t^{-1})}: \fg^x_{\bk} \longrightarrow \fg^x_{\bk},
\] 
and thus by \cite[Proposition 5.10]{jantzen-nilp}, 
\[
\text{Lie}(G^x_{\bk, \text{red}}) = (\fg^x_{\bk})_0, \quad \text{Lie}(G^x_{\bk,\text{unip}}) = \bigoplus_{k < 0} (\fg^x_{\bk})_k,
\]
with ${ (\fg^x_{\bk})_k = 0 }$ for all $k > 0$. Finally, we see that since $d\psi_k = 0$ for all $k \neq 0$, then
$d\psi(\text{Lie}G^x_{\bk,\text{unip}}) = 0$. 
%Now for any $t \in \Gm$, let 
%${
%\varphi_t : G^x \longrightarrow G^x
%}$
%denote the action from \eqref{eqn:graded-action}. 
%Thus, 
%\[
%(t \ltimes g)\cdot n = (\psi\circ \varphi_t(g))\cdot n
%\]
%for any $n \in N$ and $t \ltimes g \in \Gm \ltimes_{\phi_x} G^x_{\bk}$. In this notation, the hypothesis equivalent to saying
%${\psi \circ \varphi_t = \psi}$ for any $t \in \Gm$. 
\end{proof}
\begin{rmk}
The reason for the sign difference between the $\Z$-grading on $\fg^x_{\bk}$ in the preceding proof, and the 
$\Z$-grading in  \cite[Proposition 5.10]{jantzen-nilp}  is due to the fact that the grading here is induced from the action \eqref{eqn:graded-action}, which gives $\text{Ad}_{\phi_x(t^{-1})}$, while
the cited proposition is with respect to the inverse action given by $\text{Ad}_{\phi_x(t)}$ for $t \in \Gm$. 
\end{rmk}

%What we would like to show is that if $V$ is acted upon trivially by 
%$U_{\K}$ (equivalently $V$ factors through ${G^x_{\K}/U_{\K} \cong G^x_{red, \K}}$), then for any $\O$-lattice $M$ as above, 
%the base change $M_{\F} := M \otimes_{\O} \F$ has a trivial $U_{\F}$ action (equivalently 
%$M_{\F}$ factors through ${G^x_{\F}/U_{\F} \cong G^x_{red, \F}}$). 
%
%By \cite[Theorem 1.2.1]{mcninch2016}, there exists an integral cocharacter 
%\[
%\phi: \Gm \longrightarrow G^x,
%\]
%such that $\phi_{\K}$ and $\phi_{\F}$ are the associated cocharacters arising from the Jacobson-Morozov triples for $x_{\K}$ and $x_{\F}$ respectively. Also recall by \cite[I.2.11]{jantzen}, that if $N$ is any finitely-generated, free $\O$-module, then a $\Gm$-module structure on $N$
%is equivalent to choosing a $\Z$-grading 
%\[
%N = \bigoplus_{n \in \Z} N_n. 
%\] 
%Now we can prove the following proposition.  
\begin{prop}\label{prop:reductive-lattice}
If $V$ is any $G^x_{\K}$-module which factors through $G^x_{\K,\text{red}}$, then there exists a 
$G^x$-stable $\O$-lattice $M \subset V$ such that 
$M_{\F}$ factors through $G^x_{\F,\text{red}}$
(i.e. $G^x_{\F,\text{unip}}$ acts trivially on $M_{\F}$). 
\end{prop}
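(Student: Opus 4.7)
The plan is to transport the factorization property from $\K$ to $\F$ by passing through the larger centralizer $\cG^x$, which is smooth (hence flat) over $\O$ by Theorem~\ref{thm:graded-analogues}. Flatness lets us invoke \cite[I.10.4]{jantzen} to produce $\cG^x$-stable $\O$-lattices in $\cG^x_\K$-modules, and Lemma~\ref{lem:grading-unipotent} will convert triviality of the $\Gm$-action into triviality of the unipotent radical's action on the special fiber.

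First, I would enlarge $V$ to a $\cG^x_\K$-module $V'$ on which $\Gm$ acts trivially. Using Corollary~\ref{lem:scaling-centralizer},
\[
\cG^x_\K \cong (G^x_{\K,\text{red}} \times \Gm) \ltimes G^x_{\K,\text{unip}},
\]
and since $V$ already factors through $G^x_{\K,\text{red}}$, I can declare that both $\Gm$ and $G^x_{\K,\text{unip}}$ act trivially. The resulting action of the reductive Levi $G^x_{\K,\text{red}} \times \Gm$ is well-defined as a direct product action, and since $G^x_{\K,\text{unip}}$ is normal in $\cG^x_\K$, inflation along the quotient $\cG^x_\K \twoheadrightarrow G^x_{\K,\text{red}} \times \Gm$ produces a $\cG^x_\K$-module structure $V'$ with underlying space $V$, with $\Gm$ acting trivially, and whose restriction to $G^x_\K \hookrightarrow \cG^x_\K$ (via $g\mapsto 1\ltimes g$ in the semidirect product description of Proposition~\ref{lem:integral-scaling}) recovers the original $V$.

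Next, apply \cite[I.10.4]{jantzen} to $V'$ regarded as a $\cG^x_\K$-module. The smoothness (hence flatness) of $\cG^x$ yields a $\cG^x$-stable $\O$-lattice $M \subset V'$. In particular, via the inclusion $G^x \hookrightarrow \cG^x$ from Proposition~\ref{lem:integral-scaling}, $M$ is a $G^x$-stable $\O$-lattice in $V$, so it is a candidate for the desired lattice.

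To verify that $M_\F$ factors through $G^x_{\F,\text{red}}$, I apply Lemma~\ref{lem:grading-unipotent} with $\bk=\F$ and $N = M_\F$. By construction $\Gm \subseteq \cG^x$ acts trivially on $V'$, hence on the sub-$\O$-module $M$, and this triviality survives the base-change $M \rightsquigarrow M_\F$; equivalently, $(t\ltimes g)\cdot m = (1\ltimes g)\cdot m$ on $M_\F$ for all relevant $\F$-algebra points. Lemma~\ref{lem:grading-unipotent} then forces $G^x_{\F,\text{unip}}$ to act trivially on $M_\F$, so $M_\F$ factors through $G^x_{\F,\text{red}}$, as required. The main conceptual step is the first one (building $V'$); everything else is an essentially automatic combination of Lemma~\ref{lem:grading-unipotent} with the smoothness of $\cG^x$, and I expect no serious obstacle.
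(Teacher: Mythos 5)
Your proposal is correct and follows essentially the same route as the paper: lift $V$ to a $\cG^x_\K$-module with trivial $\Gm$- and $G^x_{\K,\text{unip}}$-actions via Corollary~\ref{lem:scaling-centralizer}, use the smoothness/flatness of $\cG^x$ to produce a $\cG^x$-stable $\O$-lattice $M$, and then apply Lemma~\ref{lem:grading-unipotent} to $M_\F$. The one place where the paper is more explicit is in justifying that the triviality of the $\Gm$-action on $V'$ passes to $M$ (the paper unfolds the $\Gm$-grading $M=\bigoplus_k M_k$ and uses freeness of $M$ to conclude $M_k=0$ for $k\neq 0$), which you assert in one line but which is indeed correct.
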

\begin{proof}
 By Corollary~\ref{lem:scaling-centralizer}, $V$ can be lifted to a module for 
$\cG^x_{\K}$ which has trivial $\Gm$ and $G^x_{\K,\text{unip}}$ actions. Comparing with
 Proposition~\ref{lem:integral-scaling}, we observe that the restriction of $V$ to 
 \[
 (\Gm \ltimes_{\phi_x} 1)_{\K} \subseteq (\Gm \ltimes_{\phi_x} G^x_{\K})_{\K}
 \]
 is also trivial. 
   Now Theorem~\ref{thm:graded-analogues} ensures that $\cG^x$ is smooth (and hence flat over $\O$), therefore there must exist
a $\cG^x$-stable $\O$-lattice $M \subset V$, and  by Proposition~\ref{lem:integral-scaling}, it follows that $M$ has the structure of a 
$\Gm$-module by restricting to $\Gm \ltimes_{\phi_x} 1$. 
%Since $M$ is a free $\O$-module, 
This structure is the equivalent to giving $M$ 
a grading $M = \bigoplus_{k \in \Z} M_k$. And moreover, 
\[
V = M \otimes \K = \bigoplus_{k \in \Z} (M_k\otimes \K),
\]
where $V_k = M_k\otimes \K$ is the grading arising from the $(\Gm \ltimes_{\phi_x} 1)_{\K}$ module structure. 
Since this module structure is trivial, then it must be the case that $V = V_0$ which, by the fact that $M$ is free, implies 
$M_k = 0$ for all $k \neq 0$. Finally, by base-changing to $\F$, it can be observed that the $(\Gm \ltimes_{\phi_x} 1)_{\F}$
module structure on $M_{\F}$ is given by 
\[
M_{\F} = M \otimes \K = \bigoplus_{k \in \Z} (M_k\otimes \F) = M_0 \otimes \F.
\]
This implies that $M_{\F}$ satisfies the conditions of Lemma~\ref{lem:grading-unipotent}, and therefore, it factors through 
$G^x_{\F,\text{red}}$. 
\end{proof}

%\begin{proof}
%Let $M|_{\phi}$ denote the pullback of $M$ to $\Gm$ along the morphism $\phi$ from above and let $\bk \in \{\K, \F\}$, then
%by construction $(M|_{\phi})_{\bk} = (M_{\bk})|_{\phi_{\bk}}$. Equivalently, if 
%${M = \bigoplus_{n \in \Z} M_n}$ is the $\Z$-grading determined by $\phi$, then 
%${M_{\bk} = \bigoplus_{n \in \Z} M_{n,\bk}}$ is the $\Z$-grading determined by $\phi_{\bk}$ (where $M_{n,\bk} = M\otimes_{\O} \bk$). 
%%\[
%%(M|_{\phi})_{\K} = (M_{\K})|_{\phi_{\K}}, \quad (M|_{\phi})_{\F} = (M_{\F})|_{\phi_{\F}}.
%%\]
%
%For a rational $G^x_{\bk}(\overline{\bk})$-module $N$, it is well-known (REF??), that $N|_{\phi_{\overline{\bk}}}$ is trivial (equivalently $N = N_{0}$) if and only if $N|_{U_{\bk}(\overline{\bk})}$ is trivial. In other words, a non-trivial action of $U_{\bk}(\overline{\bk})$ must strictly lower the $\Gm$-weights of $N$. 
%Now by our assumption on $V = M_{\K}$, for any $n \neq 0$
%\[
%M_{n,\K} = M_n \otimes_{\O} \K = 0. 
%\]
%Thus, $M = M_{0}$ since $M$ is free, and hence, $M_{\F} = M_{0,\F}$ (and $M_{\overline{\F}} = M_{0,\overline{\F}}$).   
%Therefore, $M_{\overline{\F}}$ factors through $G^x_{red,\F}(\overline{\F})$, and hence, $M_{\F}$
%factors through $G^x_{red,\F}$ since $G^x_{\F}$ and $U_{\F}$ are reduced.
%\end{proof}


\begin{thebibliography}{AMRW}
\bibitem[AHR1]{ahr:mlv}
P.~Achar, W.~Hardesty, and S.~Riche, {\em Integral exotic sheaves and the
  modular Lusztig--Vogan bijection}, in preparation.
  
  \bibitem[AHR2]{ahr:rtdrg}
P.~Achar, W.~Hardesty, and S.~Riche, {\em Representation theory of disconnected reductive groups}, in preparation.
  
 \bibitem[B]{booher:thesis}
J.~Booher, {\em Geometric Deformations of Orthogonal and Symplectic Galois
  Representations}, Ph.D. thesis, Stanford University, 2016.
  
\bibitem[GM]{gille-mb}
P.~Gille, L.~Moret-Bailly, \emph{Actions alg\'ebriques de groupes arithm\'etiques}, Torsors, \'etale homotopy and applications to rational points, 231-249,
London Math. Soc. Lecture Note Ser., 405, Cambridge Univ. Press, Cambridge, 2013.

\bibitem[Ht]{har:ag}
R.~Hartshorne, {\em Algebraic geometry}, Graduate Texts in Mathematics, no.~52,
  Springer-Verlag, New York, 1977.
  
  \bibitem[He]{her:scrg}
S.~Herpel, {\em On the smoothness of centralizers in reductive groups}, Trans.
  Amer. Math. Soc. {\bf 365.7} (2013), 3753--3774.


\bibitem[J1]{jantzen}
J.~C.~Jantzen, \emph{Representations of algebraic
groups, second edition}, Mathematical surveys and monographs 107, Amer. Math. Soc., 2003.

\bibitem[J2]{jantzen-nilp}
J.~C.~Jantzen, \emph{Nilpotent orbits in representation theory}, in \emph{Lie theory}, 1--211,
Progr. Math. 228, Birkh\"auser Boston, 
%Boston, MA, 
2004.

\bibitem[LS]{ls}
M.~Liebeck, G.~Seitz, \emph{Unipotent and nilpotent classes in simple algebraic groups and Lie algebras},
Mathematical Surveys and Monographs 180, American Mathematical Society, 2012.

%\bibitem[Mi]{milne-red}
%J.S.~Milne, \emph{Algebraic Groups (v2.00)} (2015), Available at \url{http://www.jmilne.org/math/}. 



\bibitem[Mc1]{mcninch2004}
G.~McNinch, \emph{Nilpotent orbits over ground fields of good characteristic},  Math. Ann. 329 (2004), no. 1, 49-85.


%contains a mistake in proofs of Theorems A&B
\bibitem[Mc2]{mcninch2007}
G.~McNinch, \emph{The centralizer of a nilpotent section}, Nagoya J. Math 190 (2008), pp. 129-181. 


%rectifies mistake in proof of theorem A from nagoya paper
\bibitem[Mc3]{mcninch2016}
G.~McNinch, \emph{On the nilpotent orbits of a reductive group over a local field}, preprint, 2016. 

%establishes isomorphism between component groups when $G$ is simple of adjoint type
\bibitem[MS]{mcninch-sommers}
G.~McNinch and E.~Sommers, \emph{Component groups of unipotent centralizers in good characteristic}, 
J. Algebra 260 (2003), pp. 323-337.

\bibitem[Mi]{mil:ALA}
J.~S. Milne, {\em Algebraic Groups, Lie Groups, and their Arithmetic
  Subgroups}, 2011, Available at www.jmilne.org/math/.

\bibitem[Se]{ser:lf}
J.-P. Serre, {\em Local fields}, Graduate Texts in Mathematics, vol.~67,
  Springer-Verlag, New York-Berlin, 1979, Translated from the French by Marvin
  Jay Greenberg.

\bibitem[St]{stacks-project}
The {Stacks Project Authors}, \emph{Stacks project}, \href{http://stacks.math.columbia.edu}{http://stacks.math.columbia.edu}, 2018.


%\bibitem[A1]{achar-phd}
%P.~Achar, \emph{Equivariant coherent sheaves on the nilpotent cone for complex reductive Lie groups}, Thesis (Ph.D.), Massachusetts Institute of Technology, 2001.
%
%\bibitem[A2]{achar-Kth}
%P.~Achar, \emph{On the equivariant K-theory of the nilpotent cone in the general linear group}
%Represent. Theory \textbf{8} (2004), 180--211. Corrections in Represent. Theory \textbf{20} (2016), 414--418.
%
%\bibitem[A3]{achar}
%P.~Achar, \emph{On exotic and perverse-coherent sheaves}, in \emph{Representations of reductive groups}, 11--49,
%Progr. Math. 312, Birkh\"auser/Springer, 
%%Cham, 
%2015. 
%
%\bibitem[AMRW]{mkdkm}
%P.~Achar, S.~Makisumi, S.~Riche, and G.~Williamson, \emph{Koszul duality for Kac--Moody groups and characters of tilting modules}, in preparation.
%
%\bibitem[AR1]{ar-koszul}
%P.~Achar and S.~Riche, {\em Koszul duality and semisimplicity of Frobenius},
%Ann.~Inst.~Fourier {\bf 63} (2013), 1511--1612.

\end{thebibliography}
\end{document}